\documentclass[11pt,reqno]{amsart}

\usepackage[margin=1.1in]{geometry}
\usepackage{amsmath,amssymb,mathtools,amsthm,stmaryrd,mathrsfs}
\usepackage{booktabs,array,microtype,aliascnt}

\usepackage{hyperref}
\usepackage{xcolor}
\hypersetup{
	colorlinks,
	linkcolor={red!50!black},
	citecolor={blue!50!black},
	urlcolor={blue!80!black}
}
\usepackage[capitalize,noabbrev]{cleveref}

\numberwithin{equation}{section}
\allowdisplaybreaks

\newtheorem{theorem}{Theorem}[section]
\newaliascnt{proposition}{theorem}
\newtheorem{proposition}[proposition]{Proposition}
\aliascntresetthe{proposition}
\newaliascnt{lemma}{theorem}
\newtheorem{lemma}[lemma]{Lemma}
\aliascntresetthe{lemma}
\newaliascnt{corollary}{theorem}
\newtheorem{corollary}[corollary]{Corollary}
\aliascntresetthe{corollary}
\newaliascnt{definition}{theorem}

\aliascntresetthe{definition}
\theoremstyle{remark}
\newaliascnt{remark}{theorem}
\newtheorem{remark}[remark]{Remark}
\aliascntresetthe{remark}
\newtheorem*{maintheorem}{Main Theorem}

\crefname{theorem}{Theorem}{Theorems}
\Crefname{theorem}{Theorem}{Theorems}
\crefname{proposition}{Proposition}{Propositions}
\Crefname{proposition}{Proposition}{Propositions}
\crefname{lemma}{Lemma}{Lemmas}
\Crefname{lemma}{Lemma}{Lemmas}
\crefname{corollary}{Corollary}{Corollaries}
\Crefname{corollary}{Corollary}{Corollaries}
\crefname{definition}{Definition}{Definitions}
\Crefname{definition}{Definition}{Definitions}
\crefname{remark}{Remark}{Remarks}
\Crefname{remark}{Remark}{Remarks}

\newcommand{\kk}{\Bbbk}

\newcommand{\ZZ}{\mathbb{Z}}
\newcommand{\RR}{\mathbb{R}}

\newcommand{\cA}{\mathcal{A}}

\newcommand{\cF}{\mathcal{F}}
\newcommand{\cL}{\mathcal{L}}

\newcommand{\cU}{\mathcal{U}}

\newcommand{\SR}{\mathscr S}
\newcommand{\UR}{\mathscr U}

\newcommand{\Asup}{\mathcal A_s^{\mathrm{up}}}
\newcommand{\Uneg}{\mathcal U_u^{-}}
\newcommand{\Mu}{\mathcal M_u}
\newcommand{\Muwt}[1]{\mathcal M_{u,#1}}
\newcommand{\Ugen}{\mathscr U_{\mathrm{gen}}}
\newcommand{\udisc}{u_{\Delta}}

\newcommand{\wt}{\operatorname{wt}}
\newcommand{\Frac}{\operatorname{Frac}}
\newcommand{\Spec}{\operatorname{Spec}}
\newcommand{\Sym}{\operatorname{Sym}}
\newcommand{\GL}{\operatorname{GL}}
\newcommand{\SL}{\operatorname{SL}}
\newcommand{\Rep}{\operatorname{Rep}}
\newcommand{\SI}{\operatorname{SI}}
\newcommand{\ord}{\operatorname{ord}}
\newcommand{\initer}{\operatorname{in}}
\newcommand{\dlog}{\,d\!\log}
\newcommand{\pos}[1]{\left[#1\right]_{+}}

\newcommand{\Schur}{\mathbf{S}}
\newcommand{\Specht}{\mathsf{S}}

\title{A Polyhedral Formula for $n\times2\times2$ Kronecker Coefficients via Cluster Algebras}
\author{Jiarui Fei}
\address{School of Mathematical Sciences, Shanghai Jiao Tong University, Shanghai, China}
\email{jiarui@sjtu.edu.cn}
\author{Chenxin Xue}
\address{School of Mathematical Sciences, Shanghai Jiao Tong University, Shanghai, China}
\email{xcx0711@sjtu.edu.cn}
\thanks{The authors were supported in part by the National Natural Science Foundation of China (Nos.~12131015 and 12571038).}
\subjclass[2020]{Primary 13F60, 05E10; Secondary 13A50, 20C30}
\keywords{Kronecker coefficient, tensor invariant, cluster algebra, theta function, logarithmic volume form}
\date{}

\begin{document}

\begin{abstract}
Let
\[ \mathscr U=\Bbbk[\Bbbk^3\otimes\Bbbk^2\otimes\Bbbk^2]^{U_3\times U_2\times U_2}. \]
We construct an ordinary cluster family with Markov principal part.  At
$\zeta=-1$, the
intersection of its initial Laurent ring with the three adjacent Laurent
rings equals $\mathscr U$, and
\[ \mathscr U=\mathcal M_u[u_\Delta], \]
where $\mathcal M_u$ is the middle algebra.  Its theta functions are indexed
by a cone with a sixteen-element Hilbert basis.  Multiplication by
$u_\Delta$ pairs the Hilbert generators of mutable degrees $1$ and $-1$ and
reduces each triple-weight space to the slice $\ell=\ell_0$.  Counting the
lattice points in this slice gives a finite sum with nonnegative summands.
Determinant reduction extends the formula to all $n\times2\times2$
Kronecker coefficients.
\end{abstract}

\maketitle

\section*{Introduction}

For partitions $\lambda,\mu,\nu$ of the same integer, the Kronecker
coefficient $g_{\lambda,\mu,\nu}$ is the multiplicity of
$\Specht^\lambda$ in $\Specht^\mu\otimes\Specht^\nu$.  No positive
combinatorial rule is known in general.  When $\mu$ and $\nu$ have at most
two parts, however, several formulas and structural descriptions are
available: the formulas of Remmel--Whitehead and Rosas \cite{RW,Rosas}, the
geometric construction of Adsul--Subrahmanyam \cite{AS}, the crystal basis
of Blasiak--Mulmuley--Sohoni \cite{BMS}, the chamber quasipolynomials of
Briand--Orellana--Rosas \cite{BOR09}, and the vector-partition formulation
of Mishna--Rosas--Sundaram \cite{MRS}.  Explicit computation in this range
is therefore already available.  Here a cluster theta basis gives a
different answer: one canonical slice of its tropical cone, written as a
single sum of integer-interval counts.

Call a triple $\chi=(\lambda;\mu;\nu)$ \emph{admissible}
if the three partitions have the same size and
$\ell(\lambda)\le3$, $\ell(\mu),\ell(\nu)\le2$.  Write
\[ \chi=(\lambda_1,\lambda_2,\lambda_3;\mu_1,\mu_2;\nu_1,\nu_2) \]
and set
\[ A=\lambda_1+2\lambda_2-\mu_1-\nu_1,
   \qquad
   \ell_0=\max\left\{0,\left\lfloor\frac{A+1}{2}\right\rfloor\right\}. \]
Let $\chi_\Delta=(2,2,0;2,2;2,2)$ and write
\[ \chi-\ell_0\chi_\Delta
   =(L_1,L_2,L_3;M_1,M_2;N_1,N_2). \]
If this shifted weight is not dominant, then $g_{\lambda,\mu,\nu}=0$.
Otherwise interchange $\mu$ and $\nu$, if necessary, so that $M_2\ge N_2$,
and put
\[
 \begin{aligned}
 a&=L_1-L_2,& b&=L_2-L_3,& c&=L_3,\\
 \rho&=M_1-M_2,& h&=M_2-N_2,&
 d&=M_1+N_1-L_1-2L_2.
 \end{aligned}
\]
For $x\in\ZZ$, define
\[
 \begin{aligned}
 m_x&=b-h-2x,\\
 \mathsf L(x)&=\max\left\{
 0,\left\lceil\frac{a-d-x}{2}\right\rceil,N_2-2c-x
 \right\},\\
 \mathsf U(x)&=\min\{0,m_x\}
 +\min\left\{
 a,\left\lfloor\frac{a+h+x}{2}\right\rfloor,M_2-L_2+x
 \right\}.
 \end{aligned}
\]
Then
\[ g_{\lambda,\mu,\nu}
   =\sum_{x=0}^{\min\{b,\rho\}}
     \pos{\mathsf U(x)-\mathsf L(x)+1}. \]
Here $[r]_+=\max\{r,0\}$.  For fixed $x$, the summand is the number of
integers in $[\mathsf L(x),\mathsf U(x)]$.  The formula has no cancellation
and requires no chamber decomposition in its statement.  Only division by
two occurs in the endpoints, so the resulting chamber quasipolynomials have
quasiperiod dividing two; period two occurs.  Rectangular translation and
stabilization extend the same expression to every $n\times2\times2$
Kronecker coefficient; see \cref{cor:closed-formula-22n}.

The invariant-theoretic source of the formula is
\[ \UR=\kk[\kk^3\otimes\kk^2\otimes\kk^2]^{U_3\times U_2\times U_2}. \]
Its component of weight $(\lambda;\mu;\nu)$ has dimension
$g_{\lambda,\mu,\nu}$.  We construct an ordinary cluster family with Markov
principal part and coefficient $\zeta$.  Let $\Uneg$ be the intersection,
after specialization at $\zeta=-1$, of the initial Laurent ring and its
three adjacent Laurent rings.  Let $\Mu$ be the algebra spanned by the
specialized theta functions regular along the four frozen boundary
divisors, let $\Xi\subset\RR^7$ be their tropical cone of regularity, and let
$W$ be the weight matrix of the seed.

\begin{maintheorem}
The following statements hold.
\begin{enumerate}
\item The thirteen invariants in \eqref{eq:thirteen-invariant-weights}
generate $\UR$, and
\[ \UR=\Uneg=\Mu[\udisc],
   \qquad \udisc=u_{220,22}^{22}. \]
\item The algebra $\Mu$ has theta basis
\[ \{\theta_v\mid v\in\Xi\cap\ZZ^7\}. \]
The sixteen labels in \eqref{eq:Xi-Hilbert-basis} form the Hilbert basis of
$\Xi\cap\ZZ^7$, and the corresponding theta functions form a Khovanskii
basis of $\Mu$.
\item For every admissible weight $\chi$,
\[ \UR_\chi
   =\udisc^{\ell_0}\Muwt{\chi-\ell_0\chi_\Delta}, \]
and hence
\[ g_{\lambda,\mu,\nu}
   =\#\{v\in\Xi\cap\ZZ^7\mid
          vW=\chi-\ell_0\chi_\Delta\}. \]
Integral coordinates on this two-dimensional slice give the finite sum
above.
\end{enumerate}
\end{maintheorem}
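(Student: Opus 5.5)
The proof will run through three layers. First, the invariant theory of $\UR$. Second, the cluster-theoretic identification $\UR=\Uneg$. Third, the theta-basis description of $\Mu$ together with the slicing argument.

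\emph{Step 1: generators and the upper bound.} For part (1), I will first show that the thirteen invariants in \eqref{eq:thirteen-invariant-weights} lie in $\UR$. Each one is a unipotent-invariant polynomial given by explicit flag minors and Cayley-type determinants. I will then express each of them as a Laurent polynomial in the initial cluster of the Markov-type seed, and also in each of the three adjacent seeds, after specializing $\zeta=-1$. This gives the chain
\[ \kk[\text{thirteen}]\subseteq\UR\subseteq\Uneg. \]
The second inclusion needs a codimension-two argument. $\UR$ is a normal domain, being the invariant ring of a unipotent group acting on a factorial ring, and the four tori cover the spectrum of $\UR$ up to codimension two. So every element of $\Uneg$ is regular on $\Spec\UR$. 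For the reverse direction I will compare Hilbert series. The multigraded Hilbert series of $\UR$ is $\sum_\chi g_\chi t^\chi$, which can be computed from the known two-row Kronecker generating functions (e.g.\ \cite{BOR09}). On the other side, $\Uneg\supseteq\Mu[\udisc]$, and I will bound $\Uneg$ by $\Mu[\udisc]$ from above. Matching the two series then forces equality throughout.

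\emph{Step 2: the algebra $\Mu$.} I will use the Gross--Hacking--Keel--Kontsevich theta functions of the ordinary cluster family. Regularity along the four frozen divisors translates into tropical inequalities on the labels, and these inequalities cut out $\Xi$. This gives the theta basis $\{\theta_v\}_{v\in\Xi\cap\ZZ^7}$ of $\Mu$.

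To identify the Hilbert basis, I will compute the sixteen generators of $\Xi\cap\ZZ^7$ directly. The cone is explicit, so this step is routine. For the Khovanskii property, I will choose a term order, i.e.\ a generic point in the initial chamber, under which
\[ \initer(\theta_v\theta_w)=z^{v+w}. \]
The products of the sixteen generators then have initial terms spanning the semigroup algebra $\kk[\Xi\cap\ZZ^7]$, which gives the Khovanskii basis.

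To see that $\udisc$ is algebraically independent over $\Mu$, I will check that $\udisc$ is not regular on the frozen boundary in the required way, and that its weight $\chi_\Delta$ is primitive and transverse to $\Xi W$.

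\emph{Step 3: the slice.} For part (3), I will show that $\udisc^{\ell}\Muwt{\chi-\ell\chi_\Delta}$ is increasing in $\ell$ and stabilizes exactly at $\ell_0$.

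The key computation is the pairing statement. The mutable degree is a linear functional on $\Xi$, and among the sixteen generators some have mutable degree $1$ and some have degree $-1$. Multiplying a degree-$1$ generator $\theta_p$ by its degree-$-1$ partner $\theta_q$ gives a relation
\[ \theta_p\theta_q=\udisc\cdot(\text{element of }\Mu)+(\text{theta functions of degree }0), \]
so the filtration by powers of $\udisc$ can be read off from the linear functional $A$. Concretely, an element of $\UR_\chi$ is divisible by $\udisc^{\ell}$ precisely when $\ell\le\ell_0$. I will verify this as follows:
\begin{itemize}
\item Check that $\lfloor (A+1)/2\rfloor$ is the minimal $\udisc$-order forced by the weight. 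This comes from the tropical inequality defining $\Xi$ along the $\udisc$-direction.
\item Check that no further factor of $\udisc$ is needed beyond $\ell_0$.
\end{itemize}
Together these give $\UR_\chi=\udisc^{\ell_0}\Muwt{\chi-\ell_0\chi_\Delta}$.

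Then $g_\chi$ counts lattice points $v\in\Xi\cap\ZZ^7$ with $vW=\chi-\ell_0\chi_\Delta$. Since $W$ has rank five, the fibre is two-dimensional. I will parametrize it by $x$ and a second coordinate $y$, and write the inequalities of $\Xi$ in terms of $a,b,c,\rho,h,d$. This gives $0\le x\le\min\{b,\rho\}$ and $\mathsf L(x)\le y\le\mathsf U(x)$. The term $\min\{0,m_x\}$ records a piecewise-linear inequality coming from the tropical mutation.

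\emph{Main obstacle.} I expect the hardest step to be the equality $\Uneg=\Mu[\udisc]$ together with the stabilization at $\ell_0$. Both require controlling how theta functions that are not regular at the frozen divisors combine with powers of $\udisc$. My first approach would be to prove an upper bound on $\Uneg$ by leading-term (tropical) arguments in each of the four tori, and then close the argument with the Hilbert series comparison.
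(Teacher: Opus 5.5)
Your plan has a genuine gap in part (1), and it also contains a false claim that would break part (3).

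\textbf{Part (1): the inclusion $\UR\subseteq\Mu[\udisc]$ is never proved.} Nothing in your Step 1 shows that the thirteen invariants generate $\UR$, so nothing gives this inclusion.

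Your chain is also reversed. The codimension-two argument shows $\Uneg\subseteq\UR$, not $\UR\subseteq\Uneg$. It also requires $u_1,u_5,u_7$ to be prime and coprime to $u_1',u_5',u_7'$, and it requires charts in which the frozen variables are not inverted. What you actually have is $\Ugen\subseteq\Uneg\subseteq\UR$ and $\Mu[\udisc]\subseteq\Uneg$.

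The missing inclusion is deferred to an unspecified tropical upper bound on $\Uneg$ and a Hilbert-series match with external Kronecker generating functions. That match needs the Hilbert series of $\Mu[\udisc]$, which is the whole collapse of part (3). It then amounts to checking the final formula against an independent one, which you do not carry out.

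The paper avoids this route entirely:
\begin{enumerate}
\item The local slices $\xi_1=s_2/u_1$ and $\xi_2=s_9/(2u_2)$ give $\UR[u_i^{-1}]=\Ugen[u_i^{-1}]$.
\item The regular sequence $u_1,u_2$ in $\Ugen$ removes the localizations, so $\UR=\Ugen$.
\item Each $u_i$ with $i\ne9$ is identified with a theta function in $\Mu$, via $g$-vectors and one-dimensional weight spaces.
\item $\udisc$ lies in all four charts by $u_1^2\udisc=u_6^2+4u_2u_3u_4$ and $u_5^2\udisc=u_{10}^2-4u_2u_3u_{13}$.
\end{enumerate}
This closes $\UR=\Ugen\subseteq\Mu[\udisc]\subseteq\Uneg\subseteq\UR$ with no Hilbert series.

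\textbf{Part (3): $\udisc$ is not algebraically independent over $\Mu$.} The first identity above puts $\udisc$ in $\Frac(\Mu)$, and indeed $\theta_{h_1^-}\doteq\udisc\,u_1$. If $\udisc$ were independent, $\sum_\ell\udisc^\ell\Muwt{\chi-\ell\chi_\Delta}$ would be a direct sum. At $\chi=(320;32;32)$ the elements $\theta_{h_1^-}$ and $\udisc u_1$ are nonzero at levels $0$ and $1$, yet $g_{(3,2),(3,2),(3,2)}=1$. A Hilbert series of the form $H_{\Mu}/(1-t^{\chi_\Delta})$ would be wrong for the same reason. The theorem only needs $\udisc\notin\Mu$.

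Your slice mechanism is also not the right one. A relation $\theta_p\theta_q=\udisc(\cdots)+(\cdots)$ does not move elements between levels. Moreover, $\udisc$ has no label in $\Xi$, so no inequality of $\Xi$ ``along the $\udisc$-direction'' can produce $\ell_0$.

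The needed fact is $\theta_{h_i^-}=c_i\udisc\,\theta_{h_i^+}$ for the four pairs of Hilbert generators of mutable degree $\pm1$. It is forced by one-dimensionality of the weight spaces of weight $h_i^-W$. It implies $\udisc^r f\in\Mu$ for homogeneous $f\in\Mu$ of mutable degree $d$ whenever $r$ lies between $0$ and $d$. Levels $\ell>\ell_0$ are then pushed down to $\ell_0$. Levels $\ell<\ell_0$ are first reflected to $A-\ell\ge\ell_0$ and then pushed down. The terms do not ``increase and stabilize'': above $\ell_0$ they lie inside the level-$\ell_0$ term and eventually vanish.

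Finally, your Step 2 takes for granted two results it needs: finiteness of the Markov theta functions (Zhou's theorem) and the coefficient-specialization results at $\zeta=-1$.
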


The cluster structure supplied by invariant theory first appears on the
double-$U$-invariant algebra
\[ \SR=\kk[\kk^3\otimes\kk^2\otimes\kk^2]^{U_3\times U_2}. \]
The last root subgroup acts on $\SR$ through a locally nilpotent derivation
$\partial$, and $\UR=\ker\partial$.  One quotient exchange has a negative
sign.  Introducing the coefficient $\zeta$ places the three quotient
exchanges in an ordinary cluster family over $\kk[\zeta^{\pm1}]$; the
signed relations are recovered at $\zeta=-1$.

A local slice for $\partial$ gives rational coordinates on
$Y_{322}=\Spec\UR$.  Contracting the logarithmic top form on the upstairs
seed gives
\[ \omega_{322}=-\frac{dq_1\wedge\cdots\wedge dq_7}{q_1q_2^3}. \]
Together with two quotient identities, this form detects adjacent
logarithmic charts with frozen divisor $u_2u_3u_4u_{13}=0$ and exchange
\[ u_7u_8=u_4u_5^2+u_1^2u_{13}. \]
The same data have a $D_5$-parabolic interpretation, recorded in
Appendix~\ref{app:D5-parabolic}.

The proof that the thirteen invariants generate $\UR$ uses two local
slices.  They identify $\UR$ with the proposed subalgebra after localization
at $u_1$ and at $u_2$; one regular-sequence calculation removes the
localizations.  Factorization in the UFD $\UR$ then gives
$\Uneg\subseteq\UR$.

The mutable quiver is the Markov quiver of the once-punctured torus.
Zhou's theorem for the principal-coefficient Markov seed \cite{Zhou}
gives finite theta functions, and the specialization results of
\cite{CMMM} preserve their tropicalizations and valuative independence at
$\zeta=-1$.  Li's middle--upper theorem for surfaces with at least two
punctures \cite{Li} gives a related result, but does not apply to the present
once-punctured fiber with noninvertible frozen variables.

Regularity along the four boundary divisors cuts out $\Xi$.  Its Hilbert
basis contains four generators of mutable degree $1$, four of degree $-1$,
and eight of degree $0$.  The negative-degree generators are scalar
multiples of $\udisc$ times their positive-degree partners.  This pairing
moves every weight space to the single level $\ell_0$.  The resulting slice
has dimension two, and projection to one coordinate gives the interval sum
above.

Appendix~\ref{app:partial-ring} recalls the cluster algebra on $\SR$.
Appendix~\ref{app:regular-sequence-certificate} gives the regular-sequence
calculation.  Appendix~\ref{app:folded-DT} identifies the folded
Donaldson--Thomas transformation behind the degree pairing, and
Appendix~\ref{app:other-models} records two further cluster models without
proofs.  The ancillary file \path{Kron322.py} checks the finite matrix, fan,
semigroup, Donaldson--Thomas, and summation calculations and compares the
closed formula with the symmetric-group character formula for all
admissible triples of size at most twelve.

Throughout, $\kk$ is an algebraically closed field of characteristic zero.
Partitions are padded by trailing zeros when necessary.  We write
$110=(1,1,0)$, $21=(2,1)$, and so forth.

\section{Tensor invariants and Kronecker coefficients}
\label{sec:invariants}

\subsection{Tensor \texorpdfstring{$U$}{U}-invariant algebras}

Let $\lambda\vdash N$ be a partition.  We write $\Specht^\lambda$ for
the corresponding Specht module of the symmetric group $\mathfrak S_N$,
and $\Schur_\lambda V$ for the corresponding Schur module.  For three
partitions of $N$, the Kronecker coefficient is defined by
\[ \Specht^\mu\otimes\Specht^\nu \cong\bigoplus_{\lambda\vdash N} g_{\lambda,\mu,\nu}\Specht^\lambda. \]

For positive integers $a,b,c$, set
\[ T_{a,b,c}=\kk^a\otimes\kk^b\otimes\kk^c. \]
Throughout, $\kk[T_{a,b,c}]$ denotes the polynomial coordinate ring
$\Sym(T_{a,b,c}^{\vee})$, with the three general linear groups acting on
functions contragrediently.  We use the standard polynomial highest-weight
labels in the resulting Cauchy decomposition and suppress dual symbols on
Schur functors.  Let $U_d\subset\GL_d$ be the standard upper unitriangular
subgroup.
The double- and triple-$U$-invariant algebras are
\[ \SR_{a,b,c} :=\kk[T_{a,b,c}]^{U_a\times U_b}, \qquad \UR_{a,b,c} :=\kk[T_{a,b,c}]^{U_a\times U_b\times U_c}. \]
They are graded by triples of dominant weights.  When
$(a,b,c)=(3,2,2)$, we abbreviate them to $\SR$ and $\UR$.  In this
case the last factor $U_2$ is isomorphic to $\mathbb G_a$.  Set
\[ X_{\mathrm{dbl}}:=\Spec\SR, \qquad Y_{322}:=\Spec\UR, \qquad \pi:X_{\mathrm{dbl}}\longrightarrow Y_{322}. \]
Then $Y_{322}=X_{\mathrm{dbl}}/\!/\mathbb G_a$ is the affine categorical
quotient: every $\mathbb G_a$-invariant morphism from
$X_{\mathrm{dbl}}$ to an affine scheme factors uniquely through $\pi$.
All subsequent references to the quotient mean $Y_{322}$.  The function
field calculation in \cref{prop:birational-quotient-chart} gives dimension
seven, and \cref{thm:thirteen-generate} proves finite generation; hence
$Y_{322}$ is an affine variety.  Before those results, the same notation is
understood scheme-theoretically.  The standard Cauchy--Schur--Weyl
realization used below is recalled, for example, in
\cite[Chapter~I]{Macdonald}.

\begin{lemma}
\label{lem:kronecker-weight-space}
For partitions $\lambda,\mu,\nu$ of the same size satisfying
$\ell(\lambda)\le a$, $\ell(\mu)\le b$, and $\ell(\nu)\le c$, one has
\[ \dim (\UR_{a,b,c})_{\lambda,\mu,\nu} =g_{\lambda,\mu,\nu}. \]
\end{lemma}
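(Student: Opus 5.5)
The plan is to combine the Cauchy decomposition of the polynomial ring of a triple tensor product with Schur--Weyl duality, and then to take highest-weight vectors. First I decompose $\kk[T_{a,b,c}]=\Sym(T_{a,b,c}^\vee)$ degree by degree. In degree $N$, write $\Sym^N(V_1\otimes V_2\otimes V_3)$ as $\bigl((V_1\otimes V_2\otimes V_3)^{\otimes N}\bigr)^{\mathfrak S_N}$. Schur--Weyl duality gives
\[ V_i^{\otimes N}\cong\bigoplus_{\alpha\vdash N,\ \ell(\alpha)\le \dim V_i}\Schur_\alpha V_i\otimes\Specht^\alpha \]
as $\GL(V_i)\times\mathfrak S_N$-modules. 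Tensoring the three decompositions and taking diagonal $\mathfrak S_N$-invariants yields
\[ \Sym^N(V_1\otimes V_2\otimes V_3)\cong\bigoplus_{\lambda,\mu,\nu}\Schur_\lambda V_1\otimes\Schur_\mu V_2\otimes\Schur_\nu V_3\otimes(\Specht^\lambda\otimes\Specht^\mu\otimes\Specht^\nu)^{\mathfrak S_N}. \]
Because Specht modules are self-dual, the last factor has dimension equal to the multiplicity of $\Specht^\lambda$ in $\Specht^\mu\otimes\Specht^\nu$, namely $g_{\lambda,\mu,\nu}$. The Kronecker coefficient is symmetric in its three arguments, so the ordering of the factors is irrelevant. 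Only partitions with $\ell(\lambda)\le a$, $\ell(\mu)\le b$, $\ell(\nu)\le c$ contribute, since otherwise the corresponding Schur module vanishes.

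Next I take $U_a\times U_b\times U_c$-invariants. For a rational $\GL_d$-module, the $U_d$-invariants of an irreducible module $\Schur_\alpha$ form a one-dimensional space: the highest-weight line, on which the torus acts by $\alpha$. This is standard highest-weight theory in characteristic zero, and the same holds for the product group. Hence the weight-$(\lambda,\mu,\nu)$ component of $\UR_{a,b,c}$ in degree $N=|\lambda|$ is the highest-weight line of $\Schur_\lambda\otimes\Schur_\mu\otimes\Schur_\nu$ tensored with the multiplicity space, and its dimension is $g_{\lambda,\mu,\nu}$. The grading by size is determined by the weight, so no other degree contributes to this weight.

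The main obstacle is a bookkeeping issue rather than a conceptual one: the action on functions is contragredient. So I must check that, with the paper's stated convention of polynomial highest-weight labels and suppressed duals, the $U$-invariant vector in the summand indexed by $\lambda$ really carries the weight labelled $\lambda$, and not $-w_0\lambda$ or a lowest weight. I would fix this once by identifying $\Sym(T^\vee)$ with $\Sym(T)$ for the dual groups, so that the decomposition is literally polynomial Cauchy. Under the paper's convention the labels then match, and none of the dimension counts change.
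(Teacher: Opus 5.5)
Your proof is correct and follows the same route as the paper: decompose $\Sym^N$ of the triple tensor product into $g_{\lambda,\mu,\nu}$ copies of $\Schur_\lambda\kk^a\otimes\Schur_\mu\kk^b\otimes\Schur_\nu\kk^c$, then keep the one-dimensional highest-weight line of each copy under $U_a\times U_b\times U_c$. The only difference is that you derive the triple decomposition directly from Schur--Weyl duality on each factor followed by diagonal $\mathfrak S_N$-invariants, whereas the paper quotes the Cauchy decomposition and then restricts $\Schur_\lambda(\kk^b\otimes\kk^c)$ to $\GL_b\times\GL_c$, which amounts to the same thing.
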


\begin{proof}
The Cauchy decomposition and the restriction of a Schur functor to a tensor
product give
\[
 \Sym^N(\kk^a\otimes\kk^b\otimes\kk^c)
 \cong
 \bigoplus_{\lambda,\mu,\nu\vdash N}
 g_{\lambda,\mu,\nu}
 \bigl(\Schur_\lambda\kk^a\otimes
       \Schur_\mu\kk^b\otimes
       \Schur_\nu\kk^c\bigr).
\]
Each irreducible polynomial $\GL$-module has a one-dimensional highest
weight line.  Taking invariants under the three maximal unipotent subgroups
therefore retains precisely $g_{\lambda,\mu,\nu}$ copies of the indicated
triple highest-weight line.
\end{proof}

\subsection{Determinant reduction and stabilization}

Fix $b,c\ge1$, put $D=bc$, and assume $D\ge2$.  Let
$A=\kk^D$, $B=\kk^b$, $C=\kk^c$, and $E=B\otimes C$.  A tensor in
$A\otimes B\otimes C=A\otimes E$ is a square $D\times D$ matrix.  Let
\[ \Delta_{b,c} =\det\bigl(\operatorname{Flat}_{A\mid B\otimes C}\bigr) \in\Sym^D(A\otimes B\otimes C) \]
be the determinant of this flattening.

\begin{proposition}
\label{prop:det-extension}
Under the standard inclusion $\kk^{D-1}\subset\kk^D$, one has a direct
sum decomposition
\[ \UR_{D,b,c} =\bigoplus_{r\ge0}\Delta_{b,c}^{\,r}\UR_{D-1,b,c}. \]
Consequently,
\[ \UR_{bc,b,c}\cong\UR_{bc-1,b,c}\otimes_\kk\kk[\Delta_{b,c}] \]
as multigraded algebras, and
\[ \deg\Delta_{b,c} =\bigl((1^{bc}),(c^b),(b^c)\bigr). \]
\end{proposition}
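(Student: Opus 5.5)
We treat $\Delta_{b,c}$ as the determinant of a $D\times D$ matrix $X=\operatorname{Flat}_{A\mid E}$ and analyze the $U_D\times U_b\times U_c$-invariants weight by weight, using the Cauchy decomposition for $\kk[\operatorname{Hom}]$ together with \cref{lem:kronecker-weight-space}.

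First we settle the degree and the invariance of $\Delta_{b,c}$. Under $g=(g_A,g_B,g_C)$ the flattening transforms as $X\mapsto g_A X (g_B\otimes g_C)^{T}$, up to the contragredient conventions. Hence
\[
\Delta\mapsto \det(g_A)\,\det(g_B)^{c}\,\det(g_C)^{b}\,\Delta ,
\]
because $\det(g_B\otimes g_C)=\det(g_B)^c\det(g_C)^b$. So $\Delta$ is a $\GL_D\times\GL_b\times\GL_c$ semi-invariant, in particular $U$-invariant, of weight $((1^{bc}),(c^b),(b^c))$. Since $\Delta$ is irreducible and $\kk[T]$ is a domain, multiplication by $\Delta$ is injective.

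Next we identify $\UR_{D-1,b,c}$ inside $\UR_{D,b,c}$. Restricting to the subspace $\kk^{D-1}\otimes E$ (setting the last row of coordinates to zero) identifies $\UR_{D-1,b,c}$ with the span of the weight components of $\UR_{D,b,c}$ whose first partition $\lambda$ has $\ell(\lambda)\le D-1$. The reason is that the highest weight vector of $\Schur_\lambda\kk^D$ only involves the first $\ell(\lambda)$ basis covectors, and the restriction is compatible with the Cauchy decomposition $\kk[A\otimes E]=\bigoplus_\lambda \Schur_\lambda A\otimes\Schur_\lambda E$ with $\ell(\lambda)\le D$. Then for any weight $(\lambda,\mu,\nu)$ with $\ell(\lambda)=D$, write $\lambda=\lambda'+(r^D)$, where $r=\lambda_D$ and $\lambda'_D=0$. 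In the Cauchy decomposition we have $\Schur_\lambda A\otimes\Schur_\lambda E\cong(\det A\otimes\det E)^{\otimes r}\otimes \Schur_{\lambda'}A\otimes\Schur_{\lambda'}E$. Correspondingly, multiplication by $\Delta^r$ maps the $\Schur_{\lambda'}$-isotypic part of $\kk[A\otimes E]$ isomorphically onto the $\Schur_\lambda$-isotypic part; this is the classical statement that $\det^r$ times the $(\lambda')$-isotypic component equals the $\lambda$-isotypic component in $\kk[\mathrm{Mat}_D]$. It follows that
\[
\UR_{D,b,c}=\bigoplus_{r\ge0}\Delta^r\,\UR_{D-1,b,c},
\]
where $\Delta^r\UR_{D-1,b,c}$ is exactly the sum of the weight spaces with $\lambda_D=r$. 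Here the $E$-side $U_b\times U_c$-invariants of $\Schur_\lambda E$ are $\det(E)^r$ times those of $\Schur_{\lambda'}E$, shifted by the weight $(c^b),(b^c)$ per power. The sum is direct because distinct $r$ give distinct $\lambda_D$. As a dimension check, $g_{\lambda,\mu,\nu}=g_{\lambda',\mu-(c^b),\nu-(b^c)}$, which is the known rectangular translation.

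Finally, $\Delta$ is algebraically independent over $\UR_{D-1,b,c}$ by the grading by $\lambda_D$, so multiplication gives an isomorphism of multigraded algebras $\UR_{D-1,b,c}\otimes\kk[\Delta]\to\UR_{D,b,c}$. The main obstacle is the isotypic statement in the second step: multiplication by $\det$ must hit every highest weight vector with $\lambda_D>0$, not merely match dimensions. We would argue it by noting that any $U_D$-highest vector of weight $\lambda$ with $\lambda_D\ge1$ vanishes on the divisor $\{\Delta=0\}$. Such a vector lies in the span of products of leading minors of sizes $\lambda'_j$, and one of these includes the full $D\times D$ minor, since the left $U_D$-invariants of $\kk[\mathrm{Mat}_{D\times D}]$ are generated by the initial-row minors. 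Therefore it is divisible by the irreducible polynomial $\Delta$, and the quotient is again $U$-invariant of weight $\lambda-(1^D)$. Induction on $r$ then finishes the proof.
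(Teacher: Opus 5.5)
Your proof is correct and is essentially the paper's argument. Both use the Cauchy decomposition of $\Sym(A\otimes E)$, the twists $\Schur_\lambda\cong(\det)^r\otimes\Schur_{\bar\lambda}$ on $A$ and on $E$ realized by multiplication by $\Delta^r$, restriction from $\GL(E)$ to $\GL(B)\times\GL(C)$, and directness read off from $\lambda_D$.

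The surjectivity you flag as the main obstacle is already settled by your first argument. The injective map $\Delta^r\cdot$ sends the $\bar\lambda$-isotypic component onto a submodule isomorphic to $\Schur_\lambda A\otimes\Schur_\lambda E$. By multiplicity-freeness this submodule lies in the $\lambda$-component, which has the same dimension, so it is the whole component. Your initial-minor argument is valid but not needed. Also, in your dimension check the shifts should be $rc(1^b)$ and $rb(1^c)$, not $(c^b)$ and $(b^c)$.
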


\begin{proof}
The Cauchy decomposition is
\[ \Sym(A\otimes E) =\bigoplus_{\ell(\lambda)\le D} \Schur_\lambda A\otimes\Schur_\lambda E. \]
If $\lambda_D=0$, the $U_D$-highest-weight line in
$\Schur_\lambda A$ is the image of the $U_{D-1}$-highest-weight line in
$\Schur_\lambda\kk^{D-1}$.  Hence the sum of the components with
$\lambda_D=0$ is naturally $\UR_{D-1,b,c}$.

The element $\Delta_{b,c}$ spans $\det(A)\otimes\det(E)$, and
\[ \det(E)\cong(\det B)^c\otimes(\det C)^b. \]
For $\lambda=(\lambda_1,\ldots,\lambda_D)$, put $r=\lambda_D$ and
$\bar\lambda=\lambda-r(1^D)$.  The determinant-twist identities
\[ \Schur_\lambda A\cong(\det A)^r\otimes\Schur_{\bar\lambda}A, \qquad \Schur_\lambda E\cong(\det E)^r\otimes\Schur_{\bar\lambda}E \]
are realized by multiplication by $\Delta_{b,c}^{\,r}$.  After
restriction from $\GL(E)$ to $\GL(B)\times\GL(C)$, the last factor shifts
the other two highest weights by $rc(1^b)$ and $rb(1^c)$, respectively.
Thus every homogeneous component is uniquely a power of $\Delta_{b,c}$
times a component with last part zero.  The sum is direct because the power
is recovered from $\lambda_D$.
\end{proof}

\begin{corollary}
\label{cor:rectangular-translation}
Let $r=\lambda_{bc}$.  If $g_{\lambda,\mu,\nu}\ne0$, then
\[ \mu_b\ge rc,\qquad \nu_c\ge rb, \]
and
\[ g_{\lambda,\mu,\nu} =g_{\lambda-r(1^{bc}),\,\mu-rc(1^b),\,\nu-rb(1^c)}. \]
If either inequality fails, then $g_{\lambda,\mu,\nu}=0$.
\end{corollary}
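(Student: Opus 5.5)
The plan is to read the corollary off \cref{prop:det-extension} combined with \cref{lem:kronecker-weight-space}, applied with $a=D=bc$.

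\emph{Setting up.} Let $\lambda,\mu,\nu\vdash N$ with $\ell(\lambda)\le bc$, $\ell(\mu)\le b$, $\ell(\nu)\le c$. By \cref{lem:kronecker-weight-space},
\[ g_{\lambda,\mu,\nu}=\dim(\UR_{bc,b,c})_{\lambda,\mu,\nu}. \]
By \cref{prop:det-extension}, this weight space decomposes as a direct sum over $s\ge0$ of the weight-$(\lambda,\mu,\nu)$ parts of $\Delta_{b,c}^{\,s}\,\UR_{bc-1,b,c}$.

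\emph{Fixing the power of $\Delta_{b,c}$.} Multiplication by $\Delta_{b,c}^{\,s}$ shifts the multidegree by $s\deg\Delta_{b,c}=\bigl(s(1^{bc}),sc(1^b),sb(1^c)\bigr)$. Every weight occurring in $\UR_{bc-1,b,c}$ has first component of length at most $bc-1$, so its $bc$-th part is zero. Hence a nonzero contribution to weight $(\lambda,\mu,\nu)$ forces $s=\lambda_{bc}=r$, and only the summand $s=r$ survives. The proof of \cref{prop:det-extension} already records this: the power of $\Delta_{b,c}$ is recovered from $\lambda_D$.

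\emph{Identifying that summand.} Since $\UR_{bc,b,c}$ is a polynomial ring's invariant subring, it is a domain, so multiplication by $\Delta_{b,c}^{\,r}$ is injective. Therefore
\[ (\UR_{bc,b,c})_{\lambda,\mu,\nu}\cong(\UR_{bc-1,b,c})_{\lambda-r(1^{bc}),\,\mu-rc(1^b),\,\nu-rb(1^c)}. \]
There are two cases.
\begin{enumerate}
\item If $\mu_b<rc$ or $\nu_c<rb$, the shifted weight has a negative entry. Every weight of $\UR_{bc-1,b,c}$ is a triple of polynomial dominant weights, so this space is zero and $g_{\lambda,\mu,\nu}=0$. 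This gives the vanishing statement, and by contraposition the stated inequalities whenever $g_{\lambda,\mu,\nu}\ne0$.
\item Otherwise the shifted triple consists of partitions. They share the size $N-rbc$, have lengths at most $bc-1$, $b$, $c$, and are dominant because subtracting a constant from all parts preserves dominance. \cref{lem:kronecker-weight-space}, applied once more, identifies the dimension with the shifted Kronecker coefficient. That lemma is stated for $\UR_{bc-1,b,c}$, but Kronecker coefficients do not depend on the ambient dimension once the length conditions hold.
\end{enumerate}

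\emph{Expected difficulty.} The only mildly delicate point is the bookkeeping in the second step: the proposition's direct sum must be read componentwise in the multigrading, and it must really be the power $r=\lambda_{bc}$ alone that survives. Everything else is routine.
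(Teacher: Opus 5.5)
Your proposal is correct and follows the paper's argument. The paper's proof takes dimensions in the isomorphisms, given by multiplication by $\Delta_{b,c}^{\,r}$, between homogeneous components in the proof of \cref{prop:det-extension}, and then applies \cref{lem:kronecker-weight-space}; you have written out the bookkeeping it leaves implicit (only the power $s=\lambda_{bc}$ contributes, multiplication is injective in the domain, and the component vanishes when the shifted weight has a negative entry). Your closing caveat in case~2 is unnecessary, since \cref{lem:kronecker-weight-space} is stated for arbitrary $(a,b,c)$ and applies directly with $a=bc-1$.
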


\begin{proof}
Take dimensions in the homogeneous-component isomorphisms in the proof of
\cref{prop:det-extension} and apply \cref{lem:kronecker-weight-space}.
\end{proof}

\begin{corollary}
\label{cor:stable-first-factor}
For every $a\ge bc$, the standard inclusion $\kk^{bc}\subset\kk^a$
induces an isomorphism
\[ \UR_{bc,b,c}\xrightarrow{\sim}\UR_{a,b,c}, \]
where the first partition is padded by $a-bc$ zeros.  Hence
\[ \UR_{a,b,c}\cong\UR_{bc-1,b,c}\otimes_\kk\kk[x] \qquad(a\ge bc). \]
For $a>bc$, the new generator is the highest-weight $bc\times bc$
maximal minor of the $a\times bc$ flattening.
\end{corollary}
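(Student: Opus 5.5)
The plan is to prove the isomorphism $\UR_{bc,b,c}\xrightarrow{\sim}\UR_{a,b,c}$ degree by degree. Then the tensor-product statement follows at once from \cref{prop:det-extension}. The starting point is the Cauchy decomposition
\[ \Sym(\kk^a\otimes E)=\bigoplus_{\ell(\lambda)\le\min(a,bc)}\Schur_\lambda\kk^a\otimes\Schur_\lambda E,\qquad E=\kk^b\otimes\kk^c. \]
The bound $\ell(\lambda)\le bc$ holds because $\Schur_\lambda E=0$ whenever $\ell(\lambda)>\dim E=bc$. Consequently, for $a\ge bc$ every isotypic component that appears has $\lambda$ of length at most $bc$, which is the same index set as for $a=bc$.

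Next I compare highest-weight lines. The argument is the one already used in the proof of \cref{prop:det-extension}. For $\ell(\lambda)\le bc$, the $U_a$-highest-weight line in $\Schur_\lambda\kk^a$ is the image of the $U_{bc}$-highest-weight line in $\Schur_\lambda\kk^{bc}$ under the map induced by the standard inclusion $\kk^{bc}\subset\kk^a$. Concretely, the inclusion $\kk[T_{bc,b,c}]\hookrightarrow\kk[T_{a,b,c}]$ is dual to the projection onto the first $bc$ coordinates, and it is equivariant for $\GL_{bc}\times\GL_b\times\GL_c$. It sends $U$-invariants to $U$-invariants, because a polynomial in only the first $bc$ rows is fixed by the part of $U_a$ that moves the remaining rows into them. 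I would check this equivariance carefully, with the contragredient convention fixed in \S1.

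Taking $U_a\times U_b\times U_c$-invariants componentwise then gives the following. The map is injective, because the inclusion is injective. It is surjective onto each multigraded component, because both sides have dimension $g_{\lambda,\mu,\nu}$ by \cref{lem:kronecker-weight-space}, with $\lambda$ padded by zeros. Hence it is an isomorphism of multigraded algebras. Composing with \cref{prop:det-extension} gives $\UR_{a,b,c}\cong\UR_{bc-1,b,c}\otimes\kk[x]$, where $x$ is the image of $\Delta_{b,c}$. That image is the determinant of the top $bc\times bc$ block of the $a\times bc$ flattening, i.e.\ the highest-weight maximal minor.

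The only delicate step is the claim that the highest-weight vector of $\Schur_\lambda\kk^a$ lies in the subalgebra generated by the first $bc$ coordinate rows. I would settle it by writing the highest-weight vector explicitly as a product of leading minors of sizes $\lambda'_j\le bc$. Those minors only involve the first $\ell(\lambda)\le bc$ rows.
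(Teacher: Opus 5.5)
Your proposal is correct and is essentially the paper's proof: only $\ell(\lambda)\le bc$ occurs in the Cauchy decomposition, the $U_a$-highest-weight lines already lie in the copy coming from $\kk^{bc}$, and \cref{prop:det-extension} then gives the tensor decomposition and identifies the new generator as the top maximal minor; your injectivity-plus-dimension count via \cref{lem:kronecker-weight-space} is simply an explicit version of the paper's ``componentwise'' identification. The one point to pin down, as you note yourself, is the convention. $U_a$ must act on the row coordinates by adding earlier rows to later ones, not later rows into the first $bc$ (one reading of your sentence suggests the latter). With the correct direction, polynomials in the first $bc$ rows form a stable subring on which $U_a$ acts through $U_{bc}$.
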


\begin{proof}
Only partitions of length at most $bc=\dim E$ occur in the Cauchy
decomposition of $\Sym(\kk^a\otimes E)$.  Their $U_a$-highest-weight
lines are already contained in the standard copy of $\kk^{bc}$.  The first
assertion follows componentwise, and the second follows from
\cref{prop:det-extension}.
\end{proof}

\begin{remark}
At the level of Kronecker coefficients, \cref{cor:rectangular-translation}
is the rectangular translation symmetry of Briand--Orellana--Rosas
\cite[Proposition~7]{BOR}; the $(4,2,2)$ case also appears in
\cite[Corollary~5.1]{Vallejo}.  The proposition above is the compatible
algebra-level statement.
\end{remark}

\section{Cluster, upper, and middle algebras}
\label{sec:cluster-background}

\subsection{Cluster and upper cluster algebras}

Let $\mu,f\ge0$, put $m=\mu+f$, let
$\cF=\kk(x_1,\ldots,x_m)$, and let
$B=(b_{ki})\in M_{\mu\times(\mu+f)}(\ZZ)$ have skew-symmetrizable
principal part.  Throughout the paper, rows are indexed by the $\mu$
mutable variables and columns by all $\mu+f$ mutable and frozen variables.
A seed is a pair $\Sigma=(\mathbf{x},B)$, where
$\mathbf{x}=(x_1,\ldots,x_m)$ is a transcendence basis of $\cF$.  The
first $\mu$ variables are mutable and the remaining $f$ variables are
frozen.  Mutation in direction $k\le\mu$ replaces $x_k$ by $x_k'$, where
\[ x_kx_k' =\prod_{b_{ki}>0}x_i^{b_{ki}} +\prod_{b_{ki}<0}x_i^{-b_{ki}}, \]
and mutates $B$ by the usual matrix rule in this convention.  We write
\[ \cL(\mathbf{x}) =\kk[x_1^{\pm1},\ldots,x_\mu^{\pm1},x_{\mu+1},\ldots,x_{\mu+f}]. \]
The cluster algebra is generated by all cluster variables in the mutation
class, whereas the upper cluster algebra is
\[ \cU(\Sigma)=\bigcap_{\Sigma'\sim\Sigma}\cL(\mathbf{x}'). \]
The Laurent phenomenon gives $\cA(\Sigma)\subseteq\cU(\Sigma)$; our
conventions follow \cite{FZ1,BFZ}.

\subsection{Theta functions and the middle algebra}

For a cluster seed, Gross--Hacking--Keel--Kontsevich construct theta functions
from the scattering diagram.  In initial seed coordinates they have the form
\[ \theta_g=\mathbf{x}^gF_g(\mathbf{y}), \qquad F_g\in1+\ZZ_{\ge0}\llbracket\mathbf{y}\rrbracket. \]
Let $D_f$ be the boundary divisor associated with a frozen index $f$,
and let $\vartheta_f^\vee$ be the corresponding theta function on the dual
cluster variety.  With the min-plus convention, the cone is
\[ \Xi =\left\{g\in\RR^m\ \middle|\ (\vartheta_f^\vee)^{\mathrm{trop}}(g)\ge0 \text{ for every frozen }f\right\}. \]
If
\[ \vartheta_f^\vee=\sum_{\alpha\in A_f}c_\alpha\mathbf{x}^\alpha, \qquad c_\alpha>0, \]
then
\[ (\vartheta_f^\vee)^{\mathrm{trop}}(g) =\min_{\alpha\in A_f}\langle\alpha,g\rangle, \]
so $\Xi$ is a rational polyhedral cone.  The middle algebra is the theta
subalgebra regular along the chosen boundary.  Under the hypotheses of
\cite{GHKK}, and in the valuative formulation of \cite{CMMM}, it has basis
\begin{equation}
\label{eq:middle-theta-basis}
 \{\theta_g\mid g\in\Xi\cap\ZZ^m\}.
\end{equation}
Valuative independence implies that regularity may be tested one theta
function at a time.

Fix a translation-invariant total order on $\ZZ^m$.  In a seed torus, the
seed valuation of a nonzero Laurent polynomial is the least exponent in its
support.  This lowest-term valuation has one-dimensional leaves: if two
Laurent polynomials have the same least exponent, subtracting the ratio of
their leading coefficients strictly increases the valuation.  This is the
standard lowest-term example in \cite[Example~2.2(2)]{KM}.

Under the following hypotheses, a Hilbert basis of the value semigroup
generates the algebra.

\begin{lemma}
\label{lem:hilbert-theta-generation}
Suppose the finite theta functions in \eqref{eq:middle-theta-basis} are
homogeneous for a grading with finite-dimensional homogeneous pieces, and
admit a seed valuation with one-dimensional leaves and
$\initer(\theta_g)=\mathbf{x}^g$.  If $\mathcal H$ is the Hilbert basis
of $\Xi\cap\ZZ^m$, then the theta functions
$\{\theta_h\mid h\in\mathcal H\}$ generate the middle algebra.
\end{lemma}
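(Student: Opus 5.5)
The argument is the standard Khovanskii-basis induction on the valuation. Let $\mathcal B\subseteq\Mu$ be the subalgebra generated by $\{\theta_h\mid h\in\mathcal H\}$. By \eqref{eq:middle-theta-basis} the theta functions $\theta_g$ with $g\in\Xi\cap\ZZ^m$ span the middle algebra. It therefore suffices to show that every such $\theta_g$ lies in $\mathcal B$, and by homogeneity we may work one graded piece at a time.

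The key steps are as follows.

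\emph{Products of generators.} Fix $g\in\Xi\cap\ZZ^m$. Because $\mathcal H$ is a Hilbert basis, we can write $g=\sum_i n_ih_i$ with $h_i\in\mathcal H$ and $n_i\in\ZZ_{\ge0}$. Put $P=\prod_i\theta_{h_i}^{n_i}\in\mathcal B$. The seed valuation is a lowest-term valuation, so it is multiplicative, and $\initer(\theta_{h_i})=\mathbf x^{h_i}$. Hence $\initer(P)=\mathbf x^{g}=\initer(\theta_g)$. The product $P$ is also homogeneous of the same degree as $\theta_g$, since the grading is additive on the labels; more precisely, $\theta_g$ and $P$ both have the degree determined by $g$, as the degree of a theta function is read off from its leading exponent.

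\emph{Cancelling the leading term.} Since the leaves are one-dimensional and both leading coefficients equal $1$, the difference $\theta_g-P$ is either zero or has strictly larger valuation. It is a homogeneous element of $\Mu$ of the same degree.

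\emph{Induction.} Expand $\theta_g-P$ in the theta basis of the finite-dimensional homogeneous piece $V$ containing it. The valuations of the basis elements are their distinct labels $\mathbf x^{g'}$, so the valuation of a combination is the minimum label occurring in it. Every label $g'$ appearing in $\theta_g-P$ therefore satisfies $g'>g$ in the fixed order. Because $V$ has finite dimension, the set $S$ of labels of theta functions in $V$ is finite. Argue by descending induction on $g\in S$ with respect to the total order. For the maximal element of $S$, the difference $\theta_g-P$ must vanish, so $\theta_g=P\in\mathcal B$. In general, $\theta_g-P$ is a combination of $\theta_{g'}$ with $g'>g$ in $S$, all of which lie in $\mathcal B$ by induction, and hence $\theta_g\in\mathcal B$.

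The delicate point is justifying that the valuation of a finite combination of theta functions is the minimum of their labels. Distinct $g$ give distinct initial monomials $\mathbf x^g$, and the coefficients of these monomials cannot cancel, so the minimum is attained. I also need the degree of $\theta_g$ to depend only on $g$, so that $P$ and $\theta_g$ lie in the same finite-dimensional piece. I would check this using the assumption that the theta functions are homogeneous together with multiplicativity of $\initer$. If the grading is not determined by the labels, one can instead restrict the induction to the finite set of labels of the theta functions in $V$ and use the same minimum-valuation argument.
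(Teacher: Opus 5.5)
Your proof is correct and follows the paper's argument. You decompose $g$ in the Hilbert basis, match the initial monomial $\mathbf{x}^g$ with the corresponding product, and use finite-dimensionality of the homogeneous piece to terminate the subtraction; your descending induction over the finite label set $S$ is an explicit form of the paper's ``only finitely many valuation levels occur'' step. The homogeneity point you flag is also assumed tacitly in the paper, and in its application it holds because $\wt(\theta_g)=gW$ is linear in $g$.
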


\begin{proof}
Let $B$ be the algebra generated by the $\theta_h$, $h\in\mathcal H$.
For $g\in\Xi\cap\ZZ^m$, choose a decomposition
$g=h_1+\cdots+h_r$ with $h_i\in\mathcal H$.  After rescaling, the product
$\theta_{h_1}\cdots\theta_{h_r}$ and $\theta_g$ have the same initial
term $\mathbf{x}^g$.  Their difference is homogeneous of the same weight
and has strictly larger valuation.  The one-dimensional-leaf property
implies that only finitely many valuation levels occur in a fixed
finite-dimensional homogeneous component.  Repeating the subtraction
therefore terminates and expresses $\theta_g$ as an element of $B$.
Since the $\theta_g$ form a basis, $B$ is the middle algebra.
\end{proof}

\section{The affine quotient and the coefficient family}
\label{sec:slice-volume}

\subsection{Logarithmic charts on the affine quotient}

An algebraic torus with coordinates $z_1,\ldots,z_d$ carries the
logarithmic form
\[ \Omega_{\mathbf z}=\bigwedge_{i=1}^d d\!\log z_i =\frac{dz_1\wedge\cdots\wedge dz_d}{z_1\cdots z_d}. \]
It has neither zeros nor poles on the torus, and cluster mutations preserve
it up to sign.  In a geometric cluster $\mathcal A$-realization, the frozen
coordinates are detected by the nonzero residues along their zero divisors
\cite[Definition~11.2, Lemma~11.3, and Definition~11.5]{GS15}.  Cluster
varieties are obtained by gluing such tori by birational maps preserving the
logarithmic form \cite[\S1]{GHK15}.  Non-invertible frozen variables and the corresponding
boundary divisors in partial compactifications are treated in
\cite[\S2.2]{MandelCox}.

Contract the logarithmic top form of the seed on $\SR$ with the vector
field of the last root subgroup and restrict the contraction to a local
slice.  The resulting horizontal invariant form descends to a rational top
form $\omega_{322}$ on $Y_{322}$.  Compare \cite[\S1.4]{GS15} for the
analogous contraction along a freely acting torus.  For a seven-function
chart $\mathbf z$ with frozen product $P_F$, the required identity is
\[ P_F\bigwedge_{i=1}^7 d\!\log z_i=c\,\omega_{322} \qquad(c\in\kk^\times). \]
If the quotient of the two sides is a nonzero constant, then the seven
functions are algebraically independent and the frozen divisor has the
prescribed multiplicities.  For two charts with the same frozen product, the identity fixes the sign of
the transition.  The coefficient $\zeta$ then places the three exchanges
in an ordinary cluster family.

\subsection{The quotient slice and its volume form}

The cluster structure on the double-$U$-invariant algebra $\SR$ is recalled
in Appendix~\ref{app:partial-ring}.  Write its initial extended cluster as
\begin{equation}
\label{eq:double-initial-cluster}
 \mathbf s^\circ=(s_1,s_2,s_3\,;s_4,\ldots,s_8),
\end{equation}
with the first three variables mutable.  Let $\partial$ be the locally
nilpotent derivation induced by the positive root subgroup of the last
$\GL_2$.  The root strings needed below are
\[ \partial(s_2)=s_3,\qquad \partial(s_4)=s_9,\qquad \partial(s_9)=2s_5,\qquad \partial(s_7)=-s_8, \]
while $\partial$ kills $s_1,s_3,s_5,s_6,s_8$.  Here
\begin{equation}
\label{eq:middle-root-coefficient}
 s_9=s_{110,11}^{11}
 =\frac{s_1s_6+s_2^2s_5+s_3^2s_4}{s_2s_3}.
\end{equation}
The signs are fixed by $(X,Y)\mapsto(X,Y+tX)$ on the last tensor factor, and
\[ \UR=\ker(\partial:\SR\longrightarrow\SR). \]

On $D(s_3)$ set $\xi=s_2/s_3$.  Then $\partial(\xi)=1$, and the local-slice
theorem gives
\[ \SR[s_3^{-1}]=\UR[u_1^{-1}][\xi] \qquad (u_1=s_3) \]
\cite[Chapter~1]{Freudenburg}.  The following invariants will be used as
coordinates on the quotient:
\begin{equation}
\label{eq:quotient-coordinates}
 \begin{aligned}
 q_1&=u_3=s_1,&
 q_2&=u_1=s_3,&
 q_3&=u_2=s_5,\\
 q_4&=u_4=s_6,&
 q_5&=u_5=s_8,&
 q_6&=u_6=s_3s_9-2s_2s_5,\\
 &&q_7&=u_8=s_2s_8+s_3s_7.
 \end{aligned}
\end{equation}

\begin{proposition}
\label{prop:birational-quotient-chart}
The functions $\xi,q_1,\ldots,q_7$ are birational coordinates.  More
precisely,
\[
\begin{gathered}
 s_1=q_1,\quad s_2=\xi q_2,\quad s_3=q_2,\quad
 s_5=q_3,\quad s_6=q_4,\quad s_8=q_5,\\
 s_7=\frac{q_7}{q_2}-\xi q_5,
 \qquad
 s_4=\xi^2q_3+\frac{\xi q_6}{q_2}-\frac{q_1q_4}{q_2^2}.
 \end{gathered}
\]
Consequently,
\[ \kk(X_{\mathrm{dbl}})=\kk(q_1,\ldots,q_7)(\xi), \qquad \kk(Y_{322})=\kk(q_1,\ldots,q_7). \]
\end{proposition}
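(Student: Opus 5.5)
We prove the explicit formulas by inverting the definitions in \eqref{eq:quotient-coordinates}, and then read off the two function field statements. The first six formulas are immediate. We have $q_1=s_1$, $q_2=s_3$, $q_3=s_5$, $q_4=s_6$, $q_5=s_8$, and $s_2=\xi s_3=\xi q_2$ by the definition of $\xi$ on $D(s_3)$. The relation $q_7=s_2s_8+s_3s_7$ is linear in $s_7$ with coefficient $q_2$, so
\[ s_7=\frac{q_7-\xi q_2q_5}{q_2}=\frac{q_7}{q_2}-\xi q_5. \]
For $s_4$ we use \eqref{eq:middle-root-coefficient}. Clearing denominators gives $s_2s_3s_9=s_1s_6+s_2^2s_5+s_3^2s_4$. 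Substituting $s_3s_9=q_6+2s_2s_5$ yields
\[ s_2(q_6+2s_2s_5)=q_1q_4+s_2^2q_3+q_2^2s_4. \]
Hence $q_2^2s_4=\xi q_2q_6+\xi^2q_2^2q_3-q_1q_4$. Dividing by $q_2^2$ gives the stated formula for $s_4$.

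These formulas show that $s_1,\ldots,s_8$ lie in $\kk(q_1,\ldots,q_7)(\xi)$. The appendix exhibits $\mathbf s^\circ$ as an initial extended cluster of $\SR$, so $s_1,\ldots,s_8$ generate $\kk(X_{\mathrm{dbl}})$. Conversely, $\xi$ and all $q_i$ are rational in the $s_j$; note that $s_9$ is rational in them by \eqref{eq:middle-root-coefficient}. This gives $\kk(X_{\mathrm{dbl}})=\kk(q_1,\ldots,q_7,\xi)$. Since $\mathbf s^\circ$ is a transcendence basis, $\kk(X_{\mathrm{dbl}})$ has transcendence degree eight. The eight generators $q_1,\ldots,q_7,\xi$ are therefore algebraically independent, and so they are birational coordinates.

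For the quotient, the local-slice identity $\SR[s_3^{-1}]=\UR[u_1^{-1}][\xi]$ shows that $\xi$ is transcendental over $\Frac\UR$. Passing to fraction fields gives
\[ \kk(X_{\mathrm{dbl}})=\kk(Y_{322})(\xi). \]
Each $q_i$ lies in $\UR$, so $\kk(q_1,\ldots,q_7)\subseteq\kk(Y_{322})$. Both fields become $\kk(X_{\mathrm{dbl}})$ after adjoining the transcendental element $\xi$. To conclude that they are equal, take $f\in\Frac\UR$ and write it as a rational function $R(q,\xi)$. Applying $\partial$, which kills $f$ and every $q_i$ and satisfies $\partial\xi=1$, gives $\partial_\xi R=0$. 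Because the characteristic is zero, $R$ is independent of $\xi$, so $f\in\kk(q_1,\ldots,q_7)$. Alternatively, one can compare transcendence degrees and observe that the extension $\kk(q)\subseteq\Frac\UR$ is algebraic and contained in the purely transcendental extension $\kk(q)(\xi)$ with $\xi$ transcendental over both, which forces equality.

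The only delicate point is this last step. Either argument requires extending $\partial$ to the fraction field, which is legitimate since $\partial$ is a derivation.
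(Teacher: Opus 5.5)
Your proposal is correct and follows the paper's proof: the inverse formulas come from successive substitution, and $\kk(Y_{322})$ is identified with the kernel of $\partial$ on $\kk(q_1,\ldots,q_7)(\xi)$, where $\partial$ acts as $\partial/\partial\xi$. You also supply two steps the paper leaves implicit: that the initial cluster generates $\kk(X_{\mathrm{dbl}})$, and the transcendence-degree count giving algebraic independence. Your alternative argument is valid too, but it rests on $\kk(q_1,\ldots,q_7)$ being algebraically closed in $\kk(q_1,\ldots,q_7)(\xi)$, not on extending $\partial$ to the fraction field.
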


\begin{proof}
The inverse formulas follow successively from
\eqref{eq:quotient-coordinates} and \eqref{eq:middle-root-coefficient}.
Since $\partial(\xi)=1$ and $\partial(q_i)=0$, the rational kernel of
$\partial$ is $\kk(q_1,\ldots,q_7)$.
\end{proof}

Normalize the logarithmic form of the upstairs seed by
\[ \omega_s =s_4s_5s_6s_7s_8\bigwedge_{i=1}^8\dlog s_i =\frac{ds_1\wedge\cdots\wedge ds_8}{s_1s_2s_3}. \]

\begin{proposition}
\label{prop:quotient-volume}
The contraction $\omega_{322}:=\iota_\partial\omega_s$ descends to
$Y_{322}$ and
\begin{equation}
\label{eq:quotient-volume}
 \omega_{322}=-\frac{dq_1\wedge\cdots\wedge dq_7}{q_1q_2^3}.
\end{equation}
\end{proposition}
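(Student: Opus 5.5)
Work in the birational coordinates $(\xi,q_1,\ldots,q_7)$ of \cref{prop:birational-quotient-chart}. The strategy has three steps: rewrite $\omega_s$ in these coordinates, contract with $\partial$, and check that the result is basic for $\pi$.

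\textbf{Change of variables.} The substitution is triangular. The variables $s_1,s_3,s_5,s_6,s_8$ equal $q_1,q_2,q_3,q_4,q_5$. Next, $s_2=\xi q_2$, so modulo $dq_2$ we have $ds_2\equiv q_2\,d\xi$. Then $s_7$ contributes $dq_7/q_2$ modulo the $d\xi,dq_2,dq_5$ terms. Finally, $s_4$ contributes $(\xi/q_2)\,dq_6$ modulo $d\xi$ and the other $dq_i$.

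Ordering the $s$-coordinates and collecting the leading entries, we get
\[ ds_1\wedge\cdots\wedge ds_8=\pm\, q_2\cdot\frac{\xi}{q_2}\cdot\frac1{q_2}\;d\xi\wedge dq_1\wedge\cdots\wedge dq_7 . \]
This needs care: $s_4$ has leading dependence on $q_6$ and $s_2$ on $\xi$, but $s_4$ also involves $\xi$ through $\xi^2q_3$. The Jacobian should therefore be computed as a genuine determinant, with rows $s_1,\ldots,s_8$ and columns $\xi,q_1,\ldots,q_7$.

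Doing this, the only nonzero entries in column $q_6$ come from $s_4$, namely $\xi/q_2$. Deleting row $s_4$ and column $q_6$ leaves a triangular matrix with diagonal $1,q_2,1,1,1,1/q_2,1$ (up to ordering), whose product is $1$. Hence the determinant is $\pm\xi/q_2$.

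Dividing by $s_1s_2s_3=q_1\cdot\xi q_2\cdot q_2$ cancels the $\xi$:
\[ \omega_s=\pm\frac{d\xi\wedge dq_1\wedge\cdots\wedge dq_7}{q_1q_2^3}. \]
This $\xi$-cancellation is the crucial point: it is exactly why the descended form has no $\xi$-dependence. The sign is fixed by counting the transpositions needed to move $d\xi$ (which sits in the slot of $ds_2$) and $dq_6$ (in the slot of $ds_4$) into standard position.

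\textbf{Contraction.} Since $\partial\xi=1$ and $\partial q_i=0$, the vector field $\partial$ equals $\partial/\partial\xi$ in these coordinates. Therefore
\[ \iota_\partial\omega_s=\pm\frac{dq_1\wedge\cdots\wedge dq_7}{q_1q_2^3}. \]
This form has no $d\xi$ component, and its coefficient is independent of $\xi$. It is therefore annihilated by $\iota_\partial$ and by $L_\partial$: $L_\partial\omega_s=d\iota_\partial\omega_s+\iota_\partial d\omega_s$, and $d\omega_s=0$ as a top form. Alternatively, since $\omega_s$ is $\partial$-invariant by the computation above, so is its contraction. Being horizontal and invariant, the form is the pullback along $\pi$ of the rational form $-dq_1\wedge\cdots\wedge dq_7/(q_1q_2^3)$ on $Y_{322}$. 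This uses $\kk(Y_{322})=\kk(q_1,\ldots,q_7)$ from \cref{prop:birational-quotient-chart}.

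\textbf{Main obstacle: the sign.} The only delicate point is the overall sign $-1$. I would fix it by explicit bookkeeping. Write $ds_2=q_2d\xi+\xi dq_2$, $ds_4=(\xi/q_2)dq_6+(\ldots)$, and $ds_7=dq_7/q_2+(\ldots)$, keeping only the terms that survive in the wedge product. This gives
\[ ds_1\wedge\cdots\wedge ds_8=dq_1\wedge q_2d\xi\wedge dq_2\wedge\tfrac{\xi}{q_2}dq_6\wedge dq_3\wedge dq_4\wedge\tfrac1{q_2}dq_7\wedge dq_5. \]
Reorder this to $d\xi\wedge dq_1\wedge\cdots\wedge dq_7$ and count transpositions. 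Then contract with $\partial/\partial\xi$, which sits in the leading slot and so contributes sign $+$. The permutation parity is what produces the minus sign in \eqref{eq:quotient-volume}.
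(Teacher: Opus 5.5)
Your proposal is correct and is essentially the paper's computation: both pass to the birational coordinates $(\xi,q_1,\ldots,q_7)$, compute the Jacobian (the paper records $\det\partial(\xi,q)/\partial(s)=-s_3^2/s_2$, the reciprocal of your $-\xi/q_2$), and contract $\omega_s=d\xi\wedge\omega_{322}$ with $\partial=\partial/\partial\xi$. The differences are minor. The paper checks $\mathcal L_\partial\omega_s=0$ separately by a divergence computation in the $s$-coordinates, whereas you read basicness off the $\xi$-independence of the explicit coordinate expression. The sign you defer comes out as claimed: $(dq_1,d\xi,dq_2,dq_6,dq_3,dq_4,dq_7,dq_5)$ has five inversions relative to $(d\xi,dq_1,\ldots,dq_7)$, so $ds_1\wedge\cdots\wedge ds_8=-\frac{\xi}{q_2}\,d\xi\wedge dq_1\wedge\cdots\wedge dq_7$.
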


\begin{proof}
The divergence of $\partial$ with respect to
$ds_1\wedge\cdots\wedge ds_8$ is $s_3/s_2$, while
$\partial\log((s_1s_2s_3)^{-1})=-s_3/s_2$.  Hence
$\mathcal L_\partial\omega_s=0$, so the contraction is basic.  Finally,
\[ \det\frac{\partial(\xi,q_1,\ldots,q_7)} {\partial(s_1,\ldots,s_8)}=-\frac{s_3^2}{s_2}. \]
Using $\omega_s=d\xi\wedge\omega_{322}$ and
$s_1=q_1,s_3=q_2$ gives \eqref{eq:quotient-volume}.
\end{proof}

\subsection{The four-frozen logarithmic seed}

For a seven-function rational chart $\mathbf z=(z_1,\ldots,z_7)$ and a
chosen frozen subset $F$, put $P_F=\prod_{f\in F}f$ and
\[ \mathcal V_{\mathbf z,F}(q) =-q_1q_2^3P_F(q) \det\left(\frac{\partial\log z_i}{\partial q_j}\right)_{1\le i,j\le7}. \]
Then
\[ P_F\bigwedge_{i=1}^7\dlog z_i =\mathcal V_{\mathbf z,F}(q)\,\omega_{322}. \]
A logarithmic chart must have $\mathcal V_{\mathbf z,F}\in\kk^\times$.
Since each $d\!\log z_i$ has weight zero, its frozen product has the same
character as $\omega_{322}$, namely
\[ \kappa_{\mathrm{ac}}:=\wt(\omega_{322})=(6,4,2;7,5;7,5). \]

We use the following thirteen normalized highest-weight invariants:
\begin{equation}
\label{eq:thirteen-invariant-weights}
\begin{array}{llll}
 u_1=u_{100,10}^{10},&u_2=u_{110,11}^{20},&u_3=u_{110,20}^{11},&u_4=u_{200,11}^{11},\\
 u_5=u_{111,21}^{21},&u_6=u_{210,21}^{21},&u_7=u_{211,22}^{31},&u_8=u_{211,31}^{22},\\
 u_9=u_{220,22}^{22},&u_{10}=u_{221,32}^{32},&u_{11}=u_{321,33}^{42},&u_{12}=u_{321,42}^{33},\\
 u_{13}=u_{222,33}^{33}.&&&
\end{array}
\end{equation}
The coordinates in \eqref{eq:quotient-coordinates} fix the normalizations
of $u_1,\ldots,u_6,u_8$; the identities below and
\eqref{eq:basic-relations} fix the rest.  Since
$\wt(u_2u_3u_4u_{13})=\kappa_{\mathrm{ac}}$, take
$u_1,u_5,u_7$ mutable and
$F_4=\{u_2,u_3,u_4,u_{13}\}$ frozen.
The two identities needed for the adjacent charts are
\begin{equation}
\label{eq:two-chart-relations}
 u_3u_7=u_2u_8+u_5u_6,
 \qquad
 u_7u_8=u_4u_5^2+u_1^2u_{13}.
\end{equation}
They are direct identities of the normalized invariants; in the
$q$-coordinates they become the two equations used below.

\begin{proposition}
\label{prop:volume-selected-seed}
The seven-tuples
\[ \Sigma_4=(u_1,u_5,u_7\,;u_2,u_3,u_4,u_{13}), \qquad \Sigma_4'=(u_1,u_5,u_8\,;u_2,u_3,u_4,u_{13}) \]
are birational logarithmic charts and satisfy (in the above order)
\begin{equation}
\label{eq:two-volume-identities}
 \begin{aligned}
 u_2u_3u_4u_{13}\bigwedge_{z\in\Sigma_4}\dlog z&=-\omega_{322},\qquad
  u_2u_3u_4u_{13}\bigwedge_{z\in\Sigma_4'}\dlog z&=\omega_{322}.
 \end{aligned}
\end{equation}
If $U_4,U_4'$ are their smooth partial-chart loci and
$U=U_4\cup U_4'$, then $K_U+D_F|_U\sim0$.
\end{proposition}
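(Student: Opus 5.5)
The plan is to express every function in $\Sigma_4$ and $\Sigma_4'$ in the quotient coordinates $q_1,\ldots,q_7$ of \eqref{eq:quotient-coordinates} and then compute the Jacobian $\mathcal V_{\mathbf z,F_4}$ directly. Six of the seven entries are already available: $u_1=q_2$, $u_2=q_3$, $u_3=q_1$, $u_4=q_4$, $u_5=q_5$, and $u_8=q_7$. It remains to express $u_7$ and $u_{13}$. The first identity in \eqref{eq:two-chart-relations} gives
\[ u_7=\frac{q_3q_7+q_5q_6}{q_1}. \]
The second then gives
\[ u_{13}=\frac{u_7u_8-u_4u_5^2}{u_1^2}. \]
This is a rational function of the $q_j$.

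For $\Sigma_4'$ the chart is $(q_2,q_5,q_7;q_3,q_1,q_4,u_{13})$. Apart from $u_{13}$ it is a permutation of six of the $q_j$, so the Jacobian of logarithms is triangular once we isolate the one remaining variable $q_6$. That variable enters only through $u_{13}$, via $u_7$. We get
\[ \partial\log u_{13}/\partial q_6=\frac{q_5q_7}{q_1u_1^2u_{13}}. \]
Hence
\[ \bigwedge_{z\in\Sigma_4'}\dlog z=\pm\frac{q_5q_7}{q_1q_2^2u_{13}}\cdot\frac{dq_1\wedge\cdots\wedge dq_7}{q_1q_2q_3q_4q_5q_7}. \]
Multiplying by $u_2u_3u_4u_{13}=q_3q_1q_4u_{13}$ leaves $\pm\,dq_1\wedge\cdots\wedge dq_7/(q_1q_2^3)$. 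This equals $\pm\omega_{322}$ by \cref{prop:quotient-volume}, and the ordering sign is fixed by tracking the permutation.

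For $\Sigma_4$, replace $q_7$ by $u_7$. The variables $q_6$ and $q_7$ now enter through the pair $(u_7,u_{13})$. Here $u_{13}=(u_7q_7-q_4q_5^2)/q_2^2$ depends on $q_6$ only through $u_7$. The $2\times2$ minor of logarithmic derivatives in $(q_6,q_7)$ therefore reduces to
\[ \partial_{q_6}\log u_7\cdot\frac{u_7}{q_2^2u_{13}}. \]
This again gives a constant multiple of $\omega_{322}$ after multiplying by the frozen product. The opposite sign comes from the swap $u_7\leftrightarrow u_8$ together with the sign of the exchange, as dictated by \eqref{eq:two-chart-relations}.

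Nonvanishing of these determinants shows that each seven-tuple is algebraically independent, and hence a birational chart, since $\kk(Y_{322})=\kk(q_1,\ldots,q_7)$ by \cref{prop:birational-quotient-chart}. For the canonical class statement, on each smooth locus $U_4$, $U_4'$ the form $\omega_{322}^{-1}$, up to sign, equals $P_F\bigwedge\dlog z$. Its divisor on a chart with the mutable variables invertible is exactly $D_F$, because $\bigwedge \dlog z$ has simple poles precisely along the frozen divisors $z_f=0$. Consequently $\omega_{322}$ is a rational section of $K_U$ whose divisor restricted to $U$ is $-D_F$, so $K_U+D_F|_U\sim 0$.

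The main obstacle is careful bookkeeping of signs and orientation. In addition, one must verify that on the partial-chart loci the form $\omega_{322}$ has no extra zeros or poles along the mutable divisors $u_1,u_5$. This follows because the identity $P_F\bigwedge\dlog z=\pm\omega_{322}$ holds with a constant, so any such divisor would appear on both sides.
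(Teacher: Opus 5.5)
Your Jacobian computations are correct and follow essentially the paper's route. The paper computes $dr\wedge ds\equiv\frac{rq_5}{q_1q_2^2}\,dq_6\wedge dq_7$ modulo $dq_1,\dots,dq_5$ for $\Sigma_4$, and obtains $\Sigma_4'$ from $\dlog u_8=-\dlog u_7+\dlog(u_4u_5^2+u_1^2u_{13})$. You treat $\Sigma_4'$ directly as a single partial derivative, which is a fine variant.

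The genuine gap is birationality. A nonvanishing, or even constant, logarithmic Jacobian gives only algebraic independence, i.e.\ that $\kk(Y_{322})$ is a finite extension of $\kk(\mathbf z)$. It cannot detect the degree of that extension: $z=q^2$ on $\mathbb G_m$ has $\dlog z=2\dlog q$ but is not birational. You have to invert the charts, and the two relations do this immediately. From $\Sigma_4$ one gets $q_7=(u_1^2u_{13}+u_4u_5^2)/u_7$ and then $q_6=(u_3u_7-u_2q_7)/u_5$; the other five $q_j$ are chart functions. From $\Sigma_4'$ one has $q_7=u_8$, then $u_7=(u_1^2u_{13}+u_4u_5^2)/u_8$, and $q_6$ as before. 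This is how the paper argues.

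The canonical-class step misidentifies a divisor. What you proved is $\omega_{322}=\pm P_F\bigwedge\dlog z$, not a statement about $\omega_{322}^{-1}$. Since $P_F$ vanishes simply along $D_F$, where $\bigwedge\dlog z$ has simple poles, $\omega_{322}$ is regular and nowhere vanishing on $U_4$ and on $U_4'$. Its divisor on $U$ is therefore $0$, not $-D_F$. The section of $K_U$ with divisor $-D_F|_U$ is $P_F^{-1}\omega_{322}$. It equals $-\bigwedge_{\Sigma_4}\dlog z$ on $U_4$ and $\bigwedge_{\Sigma_4'}\dlog z$ on $U_4'$; this is the paper's gluing of the two logarithmic forms, which differ by $-1$ on the overlap. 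Your conclusion happens to survive because $D_F|_U=\operatorname{div}(P_F)$ is principal, but not by the argument you wrote.

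Finally, the signs are part of the statement and should be computed rather than deferred. For $\Sigma_4'$ the index order $(2,5,7,3,1,4,6)$ has nine inversions, so
$P_F\bigwedge_{\Sigma_4'}\dlog z=-dq_1\wedge\cdots\wedge dq_7/(q_1q_2^3)=\omega_{322}$.
For $\Sigma_4$ the order $(2,5,6,3,1,4,7)$ has eight inversions and your minor is $+q_5/(q_1q_2^2u_{13})$, giving $-\omega_{322}$. The relative sign comes from $\dlog u_8\equiv-\dlog u_7$ modulo $\dlog u_1,\dlog u_5,\dlog u_4,\dlog u_{13}$. It does not come from a sign in the exchange $u_7u_8=u_4u_5^2+u_1^2u_{13}$, which has none.
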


\begin{proof}
Put $r=u_7$ and $s=u_{13}$.  In the $q$-coordinates,
\eqref{eq:two-chart-relations} reads
\[ q_1r=q_3q_7+q_5q_6, \qquad q_2^2s=rq_7-q_4q_5^2. \]
Modulo $dq_1,\ldots,dq_5$ this gives
\[ dr\wedge ds\equiv \frac{rq_5}{q_1q_2^2}\,dq_6\wedge dq_7. \]
Substitution in the displayed order of $\Sigma_4$, followed by
\eqref{eq:quotient-volume}, gives the first identity in
\eqref{eq:two-volume-identities}.

For the second, write $F=u_4u_5^2+u_1^2u_{13}$.  Since $u_7u_8=F$,
\[ \dlog u_8=-\dlog u_7+\dlog F. \]
The last term lies in the span of
$\dlog u_1,\dlog u_5,\dlog u_4,\dlog u_{13}$ and therefore vanishes in the
top wedge.  Replacing $u_7$ by $u_8$ changes only the sign, proving the
second identity.  The two relations recover the missing quotient coordinates from either
seven-tuple, so both charts are birational.

On $U_4$ the logarithmic form has divisor $-D_F|_{U_4}$, and similarly on
$U_4'$.  By \eqref{eq:two-volume-identities} the two forms differ by $-1$
on the overlap; they therefore glue to a nowhere-vanishing section of
$\omega_U(D_F|_U)$.  Hence $K_U+D_F|_U\sim0$.
\end{proof}

The remaining quotient exchanges are
\[ u_1u_1'=u_2u_4u_5^2+u_3u_7^2, \qquad u_5u_5'=u_1^2u_2u_{13}-u_3u_7^2. \]
To absorb the sign in the second exchange, introduce an invertible
coefficient $\zeta$.  Let $\Sigma_{u,\zeta}$ have mutable variables
$(u_1,u_5,u_7)$ and frozen coefficients
$(u_2,u_3,u_4,u_{13},\zeta)$ over
$K_\zeta=\kk[u_2,u_3,u_4,u_{13},\zeta^{\pm1}]$.  On the fiber
$\zeta=-1$ we use the order
\begin{equation}
\label{eq:markov-variable-order}
 \mathbf u^\circ=(u_1,u_5,u_7\,;u_2,u_3,u_4,u_{13}).
\end{equation}
In the family order
$(u_1,u_5,u_7\,;u_2,u_3,u_4,u_{13},\zeta)$, the mutable-row exchange matrix is
\[
 B_{u,\zeta}=
 \begin{bmatrix}
  0&-2& 2&-1& 1&-1& 0&0\\
  2& 0&-2& 1&-1& 0& 1&1\\
 -2& 2& 0& 0& 0& 1&-1&0
 \end{bmatrix}.
\]
Its exchanges are
\begin{equation}
\label{eq:zeta-family-exchanges}
 \begin{aligned}
 u_1u_{1,\zeta}'&=u_2u_4u_5^2+u_3u_7^2,\\
 u_5u_{5,\zeta}'&=u_3u_7^2+\zeta u_1^2u_2u_{13},\\
 u_7u_{7,\zeta}'&=u_4u_5^2+u_1^2u_{13}.
 \end{aligned}
\end{equation}
At $\zeta=-1$, the first and third exchanges are unchanged and
$u_{5,\zeta}'=-u_5'$.  Thus the signed quotient chart is the negative fiber
of an ordinary cluster family.

Appendix~\ref{app:D5-parabolic} gives a $D_5$-parabolic and integral
Gale-dual interpretation of this seed.

\section{The invariant ring, the four-chart upper bound, and the middle algebra}
\label{sec:ring-equalities}

\subsection{The invariant ring and the four-chart upper bound}
\label{subsec:invariant-upper}

Put
\[ \Ugen=\kk[u_1,\ldots,u_{13}]\subseteq\UR. \]
The generators satisfy the six identities
\begin{equation}
\label{eq:basic-relations}
 \begin{alignedat}{2}
 u_3u_7&=u_2u_8+u_5u_6,
 &\qquad u_1^2u_9&=u_6^2+4u_2u_3u_4,\\
 u_1u_{10}&=2u_2u_8+u_5u_6,
 &u_1u_{11}&=-2u_2u_4u_5-u_6u_7,\\
 u_1u_{12}&=2u_3u_4u_5-u_6u_8,
 &u_1^2u_{13}&=u_7u_8-u_4u_5^2.
 \end{alignedat}
\end{equation}
They follow by substitution in the quotient coordinates of
\cref{prop:birational-quotient-chart}.  Choose the generating set $s_1,\ldots,s_{14}$ of $\SR$ recorded in
\cref{tab:partial-generators}; it is obtained from the minimal generating
set in Appendix~\ref{app:partial-ring} by adjoining one redundant one-step
cluster variable.  The nonzero values of $\partial$ are
\begin{equation}
\label{eq:partial-on-s-generators}
 \begin{gathered}
 \partial(s_2)=s_3,\qquad \partial(s_4)=s_9,\qquad
 \partial(s_7)=-s_8,\qquad \partial(s_9)=2s_5,\\
 \partial(s_{10})=s_{11},\qquad
 \partial(s_{12})=2s_{10},\qquad
 \partial(s_{13})=2s_5s_2.
 \end{gathered}
\end{equation}
The remaining seven generators are killed by $\partial$.  In terms of
this set, the thirteen invariants are
\begin{equation}
\label{eq:u-in-terms-of-s}
 \begin{array}{lll}
 u_1=s_3,&u_2=s_5,&u_3=s_1,\\
 u_4=s_6,&u_5=s_8,&u_7=s_{11},\\
 u_{13}=s_{14},&u_6=s_3s_9-2s_5s_2,&u_8=s_3s_7+s_2s_8,\\
 u_9=s_9^2-4s_5s_4,&u_{10}=s_9s_8+2s_5s_7,&u_{11}=2s_5s_{10}-s_9s_{11},\\
 \multicolumn{3}{l}{u_{12}=s_2s_9s_8-s_3s_9s_7+2s_2s_5s_7-2s_3s_4s_8.}
 \end{array}
\end{equation}
The root strings \eqref{eq:partial-on-s-generators} show directly that all
expressions in \eqref{eq:u-in-terms-of-s} lie in $\ker\partial$.

Consider the two local slices
\[ \xi_1=\frac{s_2}{s_3}=\frac{s_2}{u_1}, \qquad \xi_2=\frac{s_9}{2s_5}=\frac{s_9}{2u_2}. \]
Both satisfy $\partial(\xi_i)=1$.  They arise from root strings of lengths two and three; their denominators
$u_1$ and $u_2$ are nonassociate invariants of degrees one and two.  For $i=1,2$, set
\[ \rho_i(g)=\exp(-\xi_i\partial)g =\sum_{j\ge0}\frac{(-\xi_i)^j}{j!}\partial^j(g). \]
The sum is finite because $\partial$ is locally nilpotent.  The
nonzero values of the two local-slice retractions are
\begin{equation}
\label{eq:slice-retraction-table}
\begin{array}{c|c|c}
 g&\rho_1(g)&\rho_2(g)\\ \hline
 s_{10}&-u_4u_5/u_1&u_{11}/(2u_2)\\
 s_7&u_8/u_1&u_{10}/(2u_2)\\
 s_{12}&u_4u_8/u_1^2&u_7u_9/(4u_2^2)\\
 s_{13}&u_3u_4/u_1&u_1u_9/(4u_2)\\
 s_2&0&-u_6/(2u_2)\\
 s_4&-u_3u_4/u_1^2&-u_9/(4u_2)\\
 s_9&u_6/u_1&0
\end{array}
\end{equation}
The generators $s_1,s_3,s_5,s_6,s_8,s_{11},s_{14}$ are already
invariant.  The local-slice theorem \cite[Chapter~1]{Freudenburg} and \eqref{eq:slice-retraction-table}
therefore give
\begin{equation}
\label{eq:two-localized-equalities}
 \UR[u_1^{-1}]=\Ugen[u_1^{-1}],
 \qquad
 \UR[u_2^{-1}]=\Ugen[u_2^{-1}].
\end{equation}

\begin{lemma}[Intersection of two localizations]
\label{lem:two-localization}
Let $A$ be a domain and let $f,g\in A$ be nonzero.  If multiplication
by $g$ is injective on $A/fA$, then, inside $\Frac(A)$,
\[ A[f^{-1}]\cap A[g^{-1}]=A. \]
\end{lemma}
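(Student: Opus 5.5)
The inclusion $A\subseteq A[f^{-1}]\cap A[g^{-1}]$ is clear, so only the reverse inclusion needs proof. Let $x$ lie in the intersection. I will write it with denominators that are powers of $f$ and of $g$, clear them, and then use the hypothesis that $g$ is a nonzerodivisor on $A/fA$ to peel off powers of $f$ one at a time.

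Since $x\in A[f^{-1}]$, we can write $x=a/f^m$ with $a\in A$ and $m\ge0$. Since $x\in A[g^{-1}]$, we can also write $x=b/g^n$ with $b\in A$ and $n\ge0$. Cross-multiplying inside the domain $\Frac(A)$ gives
\[ g^n a=f^m b \quad\text{in } A. \]
I will show by induction on $m$ that $x\in A$; the case $m=0$ is immediate. Suppose $m\ge1$. Then $g^n a\in fA$. The hypothesis says that multiplication by $g$ is injective on $A/fA$, and hence so is multiplication by $g^n$, being a composite of injective maps. Therefore $a\in fA$, say $a=fa'$.

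Substituting, $f g^n a'=f^m b$. Because $A$ is a domain and $f\neq0$, we may cancel $f$ to get $g^n a'=f^{m-1}b$, and also $x=a'/f^{m-1}$. This is the same situation with $m$ lowered by one, so the induction applies and gives $x\in A$.

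The only point that needs care is the passage from injectivity of $g$ on $A/fA$ to injectivity of $g^n$; this holds because a composite of injective maps is injective. The argument uses only cancellation of $f$ in a domain and never needs $g$ to be a unit, so there is no real obstacle.
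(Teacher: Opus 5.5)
Your proof is correct and follows the same route as the paper: you cross-multiply to get $g^n a=f^m b$, use injectivity of $g^n$ on $A/fA$ to get $a\in fA$, cancel one factor of $f$ in the domain, and iterate. Your explicit induction on $m$ is the paper's ``cancelling one power of $f$ and iterating.''
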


\begin{proof}
Write an element of the intersection as
$x=a/f^p=b/g^q$.  Then $g^qa=f^pb$.  Modulo $f$, injectivity of
multiplication by $g^q$ gives $a\in fA$.  Cancelling one power of
$f$ and iterating removes the entire denominator $f^p$, so $x\in A$.
\end{proof}

\begin{lemma}
\label{lem:Ugen-regular-sequence}
The pair $u_1,u_2$ is a regular sequence in $\Ugen$.  Equivalently,
multiplication by $u_2$ is injective on $\Ugen/u_1\Ugen$.
\end{lemma}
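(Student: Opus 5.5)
Since $\Ugen$ is a polynomial quotient, I will present it as $\Ugen\cong P/I$ with $P=\kk[U_1,\ldots,U_{13}]$ and $I$ the kernel of $U_i\mapsto u_i$. I will not try to find all of $I$ directly. Instead I will show that $\Ugen/u_1\Ugen$ embeds $u_2$-torsion-freely into a ring I can control. Two facts are already available: $u_1\neq0$ in the domain $\Ugen$, and $u_1$ is a nonzerodivisor. So the content of the lemma is exactly the injectivity of multiplication by $u_2$ on $\Ugen/u_1\Ugen$.

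The plan has three steps.

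\emph{Step 1.} Compute, by Gr\"obner basis, a candidate ideal $J\subseteq I$ generated by the six relations \eqref{eq:basic-relations} together with the further syzygies obtained by eliminating $u_9,\ldots,u_{13}$. Concretely, I will take the ideal generated by \eqref{eq:basic-relations} and saturate it with respect to $U_1$. By \eqref{eq:two-localized-equalities}, after inverting $u_1$ the ring $\Ugen[u_1^{-1}]=\UR[u_1^{-1}]$ is generated by $u_1^{\pm1},u_2,\ldots,u_8$. Moreover, $u_9,\ldots,u_{13}$ are expressed through \eqref{eq:basic-relations}, and $u_2,\ldots,u_8$ satisfy only the single relation $u_3u_7=u_2u_8+u_5u_6$, since the dimension is seven. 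Hence $I=(J:U_1^\infty)$, and this saturation is a finite computation.

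\emph{Step 2.} With $I$ in hand, compute a Gr\"obner basis of $I+(U_1)$ and then the colon ideal $(I+(U_1)):U_2$. Verify that it equals $I+(U_1)$. This is the regular-sequence certificate of Appendix~\ref{app:regular-sequence-certificate}. Equivalently, I will check that $U_2$ is not in any associated prime of $P/(I+(U_1))$. A cheaper variant is to find a term order in which $U_2$ does not divide any leading term of the reduced Gr\"obner basis of $I+(U_1)$ and the order is compatible with $U_2$ being last, so that multiplication by $U_2$ is injective on the standard monomials. Because everything is multigraded by the triple weight, each homogeneous piece is finite-dimensional, and the check can also be cross-validated degree by degree against the Hilbert series. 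In particular, the Hilbert series of $\Ugen/(u_1,u_2)$ should equal $(1-t^{\deg u_1})(1-t^{\deg u_2})$ times that of $\Ugen$, where the latter is known from \cref{lem:kronecker-weight-space} once equality with $\UR$ is expected.

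\emph{Step 3.} Conclude by the standard equivalence. Since $u_1$ is a nonzerodivisor and $u_2$ is injective modulo $u_1$, the pair $u_1,u_2$ is a regular sequence. The quotient is nonzero because both elements are homogeneous of positive degree in a positively graded ring.

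The main obstacle is Step 1: making sure the presentation ideal is really all of $I$, since a missing relation invalidates the colon computation. The saturation argument handles this, but it relies on identifying $\Ugen[u_1^{-1}]$ with a localized hypersurface ring. For that I would first check, using the volume identity of \cref{prop:volume-selected-seed}, that $u_2,\ldots,u_8$ have exactly one algebraic relation.
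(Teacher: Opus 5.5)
Your proposal is correct and follows the paper's route. The appendix likewise obtains $I=J:U_1^\infty$ from the localized presentation: after inverting $U_1$, the last five relations eliminate $U_9,\ldots,U_{13}$ and the quadric $U_3U_7-U_2U_8-U_5U_6$ is the only remaining relation, while $I$ is prime and hence $U_1$-saturated. It then certifies $(I+(U_1)):U_2=I+(U_1)$ by a Gr\"obner computation, exactly as in your Step 2.

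The only caution concerns your Hilbert-series cross-check. Taking the Hilbert series of $\Ugen$ from \cref{lem:kronecker-weight-space} presupposes $\Ugen=\UR$, which is exactly what this lemma is used to prove, so that check can serve only as a heuristic and not as part of the proof.
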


\begin{proof}
Since $\Ugen$ is a domain, $u_1$ is a non-zero-divisor.  The colon
calculation in Appendix~\ref{app:regular-sequence} gives
\[ (I+(U_1)):U_2=I+(U_1) \]
for the defining ideal $I$ of $\Ugen$, which is the asserted
injectivity modulo $u_1$.
\end{proof}

\begin{theorem}
\label{thm:thirteen-generate}
The thirteen functions in \eqref{eq:thirteen-invariant-weights} generate the triple-$U$-invariant algebra:
\[ \UR=\Ugen. \]
\end{theorem}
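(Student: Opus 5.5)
The plan is to combine the two localized equalities \eqref{eq:two-localized-equalities} with \cref{lem:two-localization,lem:Ugen-regular-sequence}. Everything takes place inside $\Frac(\UR)=\kk(Y_{322})$. Since $\Ugen\subseteq\UR$ are domains with the same fraction field, which is a consequence of the first localized equality, all localizations can be compared there.

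First, I recall why \eqref{eq:two-localized-equalities} holds. By the local-slice theorem, $\SR[u_i^{-1}]=\UR[u_i^{-1}][\xi_i]$, and $\UR[u_i^{-1}]$ is the image of $\rho_i$ applied to $\SR[u_i^{-1}]$. The map $\rho_i$ is a ring homomorphism that fixes invariants. It is therefore determined by its values on the generators $s_1,\dots,s_{14}$ of $\SR$, together with $\rho_i(u_i^{-1})=u_i^{-1}$. Table~\eqref{eq:slice-retraction-table} shows that these values lie in $\Ugen[u_i^{-1}]$. This gives $\UR[u_i^{-1}]\subseteq\Ugen[u_i^{-1}]$; the reverse inclusion is clear.

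Second, take $f\in\UR$. By the first localized equality, $f\in\Ugen[u_1^{-1}]$, and by the second, $f\in\Ugen[u_2^{-1}]$. Now apply \cref{lem:two-localization} with $A=\Ugen$, taking $f=u_1$ and $g=u_2$ in the notation of that lemma. The injectivity hypothesis is exactly \cref{lem:Ugen-regular-sequence}. The lemma then gives $\Ugen[u_1^{-1}]\cap\Ugen[u_2^{-1}]=\Ugen$, so $f\in\Ugen$ and therefore $\UR=\Ugen$.

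The main obstacle is the regular-sequence statement \cref{lem:Ugen-regular-sequence}. It requires knowing the defining ideal $I$ of $\Ugen$ as a quotient of $\kk[U_1,\dots,U_{13}]$ and carrying out a colon-ideal Gröbner computation, which is why it is deferred to the appendix. Presenting $I$ correctly, for instance by the relations \eqref{eq:basic-relations} together with those obtained by saturating at $U_1$, is the delicate point. I would first verify that the candidate ideal equals its saturation at $U_1$, which shows it is the full kernel because $\Ugen[u_1^{-1}]$ has a transparent presentation. After that I would compute $(I+(U_1)):U_2$. The remaining steps are formal given the slice tables.
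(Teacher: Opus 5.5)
Your proposal is correct and is essentially the paper's own proof. The two local-slice equalities put every $f\in\UR$ into $\Ugen[u_1^{-1}]\cap\Ugen[u_2^{-1}]$, and this intersection equals $\Ugen$ by the two-localization lemma and the regular-sequence lemma, which the appendix proves exactly as you sketch: first $I=J:U_1^\infty$, then the colon $(I+(U_1)):U_2$.
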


\begin{proof}
By \cref{lem:two-localization,lem:Ugen-regular-sequence},
\begin{equation}
\label{eq:Ugen-localization-intersection}
 \Ugen[u_1^{-1}]\cap\Ugen[u_2^{-1}]=\Ugen.
\end{equation}
Let $f\in\UR$.  The two slice equalities
\eqref{eq:two-localized-equalities} place $f$ in both rings on the
left-hand side of \eqref{eq:Ugen-localization-intersection}; hence
$f\in\Ugen$.  The reverse inclusion follows from the definition of $\Ugen$.
\end{proof}

\begin{lemma}
\label{lem:relevant-one-dimensional-weights}
For each weight in the following table, the monomial is the unique
monomial in $u_1,\ldots,u_{13}$ of that weight:
\begin{equation}
\label{eq:unique-weight-monomials}
\begin{array}{c|c@{\qquad}c|c}
\text{weight}&\text{unique monomial}&\text{weight}&\text{unique monomial}\\ \hline
(210;21;21)&u_6&(221;32;32)&u_{10}\\
(321;33;42)&u_{11}&(321;42;33)&u_{12}\\
(320;32;32)&u_9u_1&(331;43;43)&u_9u_5\\
(431;44;53)&u_9u_7&(431;53;44)&u_9u_8
\end{array}
\end{equation}
Consequently, the corresponding homogeneous components of $\UR$ are
one-dimensional.
\end{lemma}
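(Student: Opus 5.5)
Since \cref{thm:thirteen-generate} gives $\UR=\Ugen=\kk[u_1,\ldots,u_{13}]$, every homogeneous component $\UR_\chi$ is spanned by the monomials in $u_1,\ldots,u_{13}$ of weight $\chi$. The consequence therefore follows from the first assertion together with the fact that each listed monomial is nonzero, which holds because $\UR$ is a domain and each $u_i\ne0$. The plan reduces to a finite enumeration: for each of the eight weights $\chi$ in \eqref{eq:unique-weight-monomials}, list all exponent vectors $e\in\ZZ_{\ge0}^{13}$ with $\sum_i e_i\wt(u_i)=\chi$, using the weights read off from \eqref{eq:thirteen-invariant-weights}.

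To make the enumeration short, I will first use the total degree $N=|\lambda|$ as a filter. The generators have degrees $\deg u_1=1$, $\deg u_2=\deg u_3=\deg u_4=2$, $\deg u_5=\deg u_6=3$, $\deg u_7=\deg u_8=\deg u_9=4$, $\deg u_{10}=5$, $\deg u_{11}=\deg u_{12}=\deg u_{13}=6$. The target degrees are $3,5,6,6,5,7,8,8$, so only a few small partitions of $N$ into these degrees occur. Next I will use the third row $\lambda_3$. Only $u_5,u_7,u_8,u_{10},u_{11},u_{12},u_{13}$ have $\lambda_3>0$, and each has $\lambda_3=1$, except $u_{13}$ with $\lambda_3=2$. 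Hence for weights with $\lambda_3=0$ (namely $210$ and $320$) only generators with $\lambda_3=0$ may appear, and for weights with $\lambda_3=1$ exactly one generator from the list $u_5,u_7,u_8,u_{10},u_{11},u_{12}$ appears. Then I will compare the $\mu$ and $\nu$ parts, using the second parts $\mu_2,\nu_2$ and the differences $\mu_1-\mu_2$, $\nu_1-\nu_2$, to remove the remaining candidates.

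As an illustration, take $(320;32;32)$ in degree $5$. The candidate factorizations are $1+4$, $1+2+2$, $2+3$, and so on, using only $u_1,\ldots,u_4,u_6,u_9$. The pair $u_2u_6$ has weight $(320;32;41)$, which is wrong. The product $u_1u_2u_4$ gives $\mu=(4,1)$, which is also wrong. The pair $u_1u_9$ is the only match, consistent with the relation $u_1^2u_9=u_6^2+4u_2u_3u_4$. For the weights containing $u_9$ one must also rule out products like $u_6u_5\cdot(\dots)$, $u_1u_{10}$, and $u_2u_8$; for example, the relation $u_3u_7=u_2u_8+u_5u_6$ shows that $u_3u_7$, $u_2u_8$, and $u_5u_6$ share a weight, so I must check that none of these equals $(431;44;53)$ or $(431;53;44)$.

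The main obstacle is bookkeeping rather than any conceptual difficulty. The eight checks are finite, and the symmetry $\mu\leftrightarrow\nu$ (which swaps $u_2\leftrightarrow u_3$, $u_7\leftrightarrow u_8$, and $u_{11}\leftrightarrow u_{12}$) halves the work. I will carry out the cases by hand in this order: degree filter, then $\lambda_3$ filter, then comparison of $\mu$ and $\nu$. As a check, the cases are also verifiable by the ancillary script.
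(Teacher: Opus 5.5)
Your proposal is correct and follows the paper's proof: both use \cref{thm:thirteen-generate} to reduce each weight space to the span of monomials, then check that the seven weight equations have a unique nonnegative integral solution. The paper delegates that check to the \texttt{weights} task in \texttt{Kron322.py}, while your degree and $\lambda_3$ filters make it a short hand enumeration, and all eight cases go through. There are two small slips in your illustration: $u_1u_2u_4$ has weight $(410;32;41)$, so $\lambda$ and $\nu$ exclude it, not $\mu$; and $u_3u_7$, $u_2u_8$, $u_5u_6$ have degree $6$, so your degree filter already removes them for the degree-$8$ weights.
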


\begin{proof}
By \cref{thm:thirteen-generate}, every homogeneous element of $\UR$ is a
linear combination of monomials in the thirteen generators.  For each entry in \eqref{eq:unique-weight-monomials}, the seven
nonnegative integer equations determined by
\eqref{eq:thirteen-invariant-weights} have the exponent vector as
their unique solution.  Hence the corresponding component is spanned by the monomial.  
The \texttt{weights} task in \path{Kron322.py} checks
these eight semigroup searches.
\end{proof}

Put
\[ K_u=\kk[u_2,u_3,u_4,u_{13}]. \]
Specializing the one-step variables of \eqref{eq:zeta-family-exchanges}, set
\[ u_1'=u_{1,\zeta}'\big|_{\zeta=-1},\qquad u_5'=-u_{5,\zeta}'\big|_{\zeta=-1},\qquad u_7'=u_{7,\zeta}'\big|_{\zeta=-1}. \]
They are regular invariants; the basic relations give
\begin{equation}
\label{eq:partners-in-u}
 u_1'=u_7u_{10}-u_1u_2u_{13},
 \qquad
 u_5'=u_1u_{11}+u_2u_4u_5,
 \qquad
 u_7'=u_8.
\end{equation}

The Laurent rings of the initial seed $\Sigma_{u,\zeta}$ and its three
one-step mutations are denoted by
$\cL_\circ(\zeta),\cL_1(\zeta),\cL_5(\zeta),\cL_7(\zeta)$:
\begin{equation*}
 \begin{aligned}
 \cL_\circ(\zeta)&=K_\zeta[u_1^{\pm1},u_5^{\pm1},u_7^{\pm1}],\\
 \cL_1(\zeta)&=K_\zeta[(u_{1,\zeta}')^{\pm1},u_5^{\pm1},u_7^{\pm1}],\\
 \cL_5(\zeta)&=K_\zeta[u_1^{\pm1},(u_{5,\zeta}')^{\pm1},u_7^{\pm1}],\\
 \cL_7(\zeta)&=K_\zeta[u_1^{\pm1},u_5^{\pm1},(u_{7,\zeta}')^{\pm1}].
 \end{aligned}
\end{equation*}
Write
\[ \cL_t^-:=\cL_t(\zeta)/(\zeta+1)\cL_t(\zeta) \]
for the specialization at $\zeta=-1$.  Since $-1$ is a unit,
these four rings are identified with
\begin{equation*}
 \begin{aligned}
 \cL_\circ^-&=K_u[u_1^{\pm1},u_5^{\pm1},u_7^{\pm1}],\\
 \cL_1^-&=K_u[(u_1')^{\pm1},u_5^{\pm1},u_7^{\pm1}],\\
 \cL_5^-&=K_u[u_1^{\pm1},(u_5')^{\pm1},u_7^{\pm1}],\\
 \cL_7^-&=K_u[u_1^{\pm1},u_5^{\pm1},(u_7')^{\pm1}].
 \end{aligned}
\end{equation*}
The \texttt{laurent} task in \path{Kron322.py} writes
explicit Laurent expressions for all thirteen $u_i$ in these four charts
and verifies the six relations in \eqref{eq:basic-relations}.
Define the \emph{four-chart upper bound at $\zeta=-1$} by
\[ \Uneg:=\cL_\circ^-\cap\cL_1^-\cap\cL_5^-\cap\cL_7^- \subseteq\Frac(\UR). \]
The four rings are the specializations of the initial chart and its three
one-step mutations.  No commutation of specialization with the infinite
intersection defining $\cU(\Sigma_{u,\zeta})$ is used.

\begin{lemma}
\label{lem:four-chart-coprimality}
The elements $u_1,u_5,u_7$ are pairwise nonassociate primes in the UFD
$\UR$, and
\[ \gcd(u_1,u_1')=\gcd(u_5,u_5')=\gcd(u_7,u_7')=1. \]
\end{lemma}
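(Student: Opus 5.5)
Here $\UR$ is a UFD: it is the invariant ring of a unipotent group acting on a polynomial ring, and the paper already uses this fact. I will use the multigrading by triple weights together with \cref{thm:thirteen-generate}. A homogeneous element of $\UR$ whose weight is minimal, i.e.\ not the sum of two nonzero weights occurring in $\UR$, cannot factor into two nonunits. The units of $\UR$ are the nonzero constants, since $\UR$ is positively graded by total degree.

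\textbf{Irreducibility of $u_1$, $u_5$, $u_7$.} The weight $(100;10;10)$ of $u_1$ has total degree one, so $u_1$ is irreducible and hence prime. For $u_5$, of weight $(111;21;21)$, I check that no decomposition into two nonzero weights of elements of $\Ugen$ exists. Degree-one pieces must have weight $(100;10;10)$. The remaining weight $(011;11;11)$ is not dominant for $\lambda$, so any nontrivial factorization would be impossible. For $u_7$, of weight $(211;22;31)$, I enumerate decompositions using the generator weights in \eqref{eq:thirteen-invariant-weights}. Since $\lambda_3=1$, one factor must contain $\lambda_3=1$, and its weight must be at least $(111;21;21)$. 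The complement would then be $(100;01;10)$, which is not dominant. The other option is a factor $u_1$ times something of weight $(111;12;21)$, which is again not dominant. Hence $u_7$ is irreducible. These three weights are pairwise distinct and all components here are nonzero, so the three primes are pairwise nonassociate. The \texttt{weights} task can certify this semigroup search.

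\textbf{Coprimality with the partners.} Each exchange relation reads $u_ku_k'=P_k$. Any prime factor common to $u_k$ and $u_k'$ must be $u_k$ itself, since $u_k$ is prime. It therefore suffices to show $u_k\nmid u_k'$, and for that it suffices to show $u_k\nmid P_k$; if $u_k\mid u_k'$, then $u_k^2\mid P_k$, which in particular forces $u_k\mid P_k$.

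\begin{itemize}
\item For $k=7$: $u_7'=u_8$, and $u_8$ is irreducible by the same weight argument, since its weight is the $\mu\leftrightarrow\nu$ swap of that of $u_7$. It has a different weight, so it is not associate to $u_7$.
\item For $k=1$: I work modulo $u_1$. The question is whether $u_1\mid u_7u_{10}-u_1u_2u_{13}$, which is equivalent to $u_1\mid u_7u_{10}$. Since $u_1$ is prime and does not divide $u_7$, this reduces to $u_1\nmid u_{10}$. The component of weight $(221;32;32)$ is spanned by $u_{10}$ (\cref{lem:relevant-one-dimensional-weights}). The weight $(121;22;22)$ has non-dominant $\lambda$, so that component is zero. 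A division $u_{10}=u_1g$ would need $g$ of that weight, hence $g=0$, which is impossible.
\item For $k=5$: similarly, $u_5\mid u_1u_{11}+u_2u_4u_5$ reduces to $u_5\mid u_1u_{11}$, hence to $u_5\mid u_{11}$. The quotient would have weight $(210;12;21)$, which is not dominant for $\mu$, so this fails.
\end{itemize}

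The main obstacle is making the irreducibility argument for $u_7$ (and $u_8$) rigorous. I would carry out the small semigroup enumeration over the thirteen generator weights, combined with dominance, to confirm that no factorization into homogeneous nonunits exists. Homogeneous factors suffice because in a graded domain the factors of a homogeneous element are homogeneous.
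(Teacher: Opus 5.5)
Your proof is correct and follows the paper's argument. Irreducibility of $u_1,u_5,u_7$ comes from the degree splits and nondominant weight differences, and coprimality comes from reducing the partner formulas \eqref{eq:partners-in-u} modulo each prime, where the obstructing weights $(121;22;22)$ and $(210;12;21)$ are nondominant. Your $\lambda_3$ shortcut for $u_7$ and your use of the irreducibility of $u_8$ are harmless variants of the paper's enumeration of degree-two weights and its weight difference for $u_8/u_7$. One slip should be deleted: the reduction ``it suffices to show $u_k\nmid P_k$'' is vacuous, because $P_k=u_ku_k'$ is always divisible by $u_k$. Your bullets in fact test $u_k\nmid u_k'$ directly, which is what is needed.
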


\begin{proof}
The algebra $\UR$ is factorial by the factoriality theorem for invariants
of a factorial affine variety under a connected group with trivial character
group, applied to $U_3\times U_2\times U_2$; see \cite{PV}.  Its only
units are scalars, by the positive polynomial-degree grading.

Because the grading torus is connected, every irreducible factor of a
homogeneous semi-invariant in the UFD $\UR$ is again a homogeneous
semi-invariant.  The only semigroup weight of polynomial degree one is
$\wt(u_1)$.  Hence a factorization of $u_5$ would have degree split
$1+2$, but
\[ \wt(u_5)-\wt(u_1)=(011;11;11) \]
is not dominant.  A factorization of $u_7$ has split $1+3$ or
$2+2$.  The first is excluded by
\[ \wt(u_7)-\wt(u_1)=(111;12;21). \]
The degree-two semigroup weights and the corresponding differences are
\[
\begin{array}{c|c}
\text{degree-two weight }\eta&\wt(u_7)-\eta\\ \hline
2\wt(u_1)&(011;02;11)\\
\wt(u_2)&(101;11;11)\\
\wt(u_3)&(101;02;20)\\
\wt(u_4)&(011;11;20).
\end{array}
\]
Every difference is nondominant.  Thus $u_1,u_5,u_7$ are irreducible,
hence prime, and their distinct weights make them pairwise nonassociate.

Modulo these primes, \eqref{eq:partners-in-u} gives
\[ u_1'\equiv u_7u_{10}\pmod{u_1},\qquad u_5'\equiv u_1u_{11}\pmod{u_5},\qquad u_7'=u_8. \]
The weight differences are
\[
\begin{array}{c|c}
\text{putative quotient}&\text{weight difference}\\ \hline
u_7/u_1&(111;12;21)\\
u_{10}/u_1&(121;22;22)\\
u_{11}/u_5&(210;12;21)\\
u_8/u_7&(000;1,-1;-1,1).
\end{array}
\]
None belongs to the dominant weight semigroup.  Therefore the indicated
prime divides neither factor on the right, proving the three gcd statements.
\end{proof}

\begin{proposition}
\label{prop:upper-contained-in-invariants}
One has $\Uneg\subseteq\UR$.
\end{proposition}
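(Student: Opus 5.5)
We want to show that an element $f\in\Uneg$ lies in $\UR$. The approach uses factoriality of $\UR$ and the coprimality statements of \cref{lem:four-chart-coprimality}, in the standard Berenstein--Fomin--Zelevinsky style.

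\emph{Step 1: clearing denominators in the initial chart.} All thirteen $u_i$ lie in $\UR$, and $\UR$ is a UFD with $\UR=\Ugen$ by \cref{thm:thirteen-generate}. Since $K_u\subseteq\UR$ and $u_1,u_5,u_7\in\UR$, we have $\cL_\circ^-\subseteq\UR[(u_1u_5u_7)^{-1}]$. So any $f\in\cL_\circ^-$ can be written as
\[ f=\frac{g}{u_1^{a}u_5^{b}u_7^{c}},\qquad g\in\UR, \]
and we may take $a,b,c$ minimal. By \cref{lem:four-chart-coprimality}, $u_1,u_5,u_7$ are pairwise nonassociate primes of $\UR$. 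Minimality therefore means: if $a>0$ then $u_1\nmid g$, and similarly for $u_5$ and $u_7$.

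\emph{Step 2: removing each prime from the denominator.} Suppose $a>0$. Since $f\in\cL_1^-$, and $u_1',u_5,u_7,K_u\subseteq\UR$ by \eqref{eq:partners-in-u}, we can also write
\[ f=\frac{h}{(u_1')^{p}u_5^{q}u_7^{r}},\qquad h\in\UR. \]
Comparing the two expressions inside $\Frac(\UR)$ gives
\[ g\,(u_1')^{p}u_5^{q}u_7^{r}=h\,u_1^{a}u_5^{b}u_7^{c}. \]
The prime $u_1$ divides the right-hand side. It does not divide $g$ by minimality, it does not divide $u_1'$ since $\gcd(u_1,u_1')=1$, and it does not divide $u_5$ or $u_7$ since they are nonassociate primes. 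This contradicts primality of $u_1$, so $a=0$.

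The same argument with $\cL_5^-$ and the coprimality $\gcd(u_5,u_5')=1$ gives $b=0$. With $\cL_7^-$, using $u_7'=u_8$ and $\gcd(u_7,u_8)=1$, it gives $c=0$. Hence $f=g\in\UR$.

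\emph{Main obstacle.} The delicate point is that the elements $u_1',u_5',u_7'$ entering the specialized Laurent rings are genuinely regular invariants. The partner $u_5'$ carries a sign at $\zeta=-1$, but a unit sign is harmless in a Laurent ring. Their regularity is supplied by \eqref{eq:partners-in-u} together with \cref{thm:thirteen-generate}. The only other ingredient is the coprimality, which \cref{lem:four-chart-coprimality} provides, so no further computation should be needed.
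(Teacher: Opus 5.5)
Your proof is correct and is essentially the paper's argument: you write $f$ over a denominator $u_1^au_5^bu_7^c$ in the UFD $\UR$, then use each adjacent chart $\cL_1^-,\cL_5^-,\cL_7^-$ together with the primality and coprimality statements of \cref{lem:four-chart-coprimality} to rule out a positive exponent. Your explicit cross-multiplication spells out the paper's ``$u_k$-adic exponent is nonnegative'' step. One minor point: the partners $u_1',u_5',u_7'$ lie in $\UR$ directly by \eqref{eq:partners-in-u}, since these are polynomial expressions in the $u_i$, so \cref{thm:thirteen-generate} is not needed for that.
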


\begin{proof}
Let $f\in\Uneg$.  From the initial chart,
\[ f\in\UR[u_1^{-1},u_5^{-1},u_7^{-1}]. \]
In a reduced UFD expression for $f$, the only possible irreducible
denominator factors are therefore $u_1,u_5,u_7$.  On the other hand,
\[ f\in\UR[(u_1')^{-1},u_5^{-1},u_7^{-1}]. \]
By \cref{lem:four-chart-coprimality}, all three inverted elements in this
second localization are coprime to $u_1$; hence the $u_1$-adic exponent
of $f$ is nonnegative.  The $u_5'$- and $u_7'$-charts similarly show
that the $u_5$- and $u_7$-adic exponents are nonnegative.  Thus the
reduced denominator has no irreducible factor, and $f\in\UR$.
\end{proof}

\subsection{The middle algebra and the discriminant}
\label{subsec:middle-extension}

The theta functions are first considered with all frozen variables inverted.
Set
\[ P=\kk[y_1^{\pm1},y_2^{\pm1},y_3^{\pm1}], \qquad K_\zeta^{\mathrm{tor}}=\kk[u_2^{\pm1},u_3^{\pm1},u_4^{\pm1},u_{13}^{\pm1},\zeta^{\pm1}] \]
and
\[ K_u^{\mathrm{tor}} =\kk[u_2^{\pm1},u_3^{\pm1},u_4^{\pm1},u_{13}^{\pm1}]. \]
Let $\Sigma_{\mathrm{Mar}}^{\mathrm{prin}}$ be the principal-coefficient
seed with the Markov principal part.  The coefficient maps are
\[ P\xrightarrow{\ \varphi_\zeta\ }K_\zeta^{\mathrm{tor}} \xrightarrow{\ \operatorname{ev}_{-1}\ }K_u^{\mathrm{tor}}, \qquad \operatorname{ev}_{-1}(\zeta)=-1, \]
where, in normalized $y$-coordinates,
\begin{equation}
\label{eq:principal-coefficient-specialization}
 \varphi_\zeta(y_1)=\frac{u_3}{u_2u_4},\qquad
 \varphi_\zeta(y_2)=\frac{\zeta u_2u_{13}}{u_3},\qquad
 \varphi_\zeta(y_3)=\frac{u_4}{u_{13}}.
\end{equation}
Clearing the common frozen units in the three exchanges gives
\eqref{eq:zeta-family-exchanges}.

Let
\[ N_{\mathrm{mut}}=\ZZ e_1\oplus\ZZ e_2\oplus\ZZ e_3, \qquad L_\partial=\bigoplus_{i=4}^7\ZZ e_i, \qquad L_{\mathrm{fr}}=L_\partial\oplus\ZZ e_\zeta. \]
In the ordered frozen basis
$(e_4,e_5,e_6,e_7,e_\zeta)$, corresponding to
$(u_2,u_3,u_4,u_{13},\zeta)$, the monomial map
$\varphi_\zeta$ is induced by
\begin{equation}
\label{eq:coefficient-exponent-map}
 \begin{aligned}
 \kappa:\ZZ^3&\longrightarrow L_{\mathrm{fr}},\\
 \epsilon_1&\longmapsto(-1,1,-1,0,0),\\
 \epsilon_2&\longmapsto(1,-1,0,1,1),\\
 \epsilon_3&\longmapsto(0,0,1,-1,0).
 \end{aligned}
\end{equation}
Together with the identity on mutable directions, $\kappa$ gives the
commutative diagram
\begin{equation}
\label{eq:coefficient-lattice-diagram}
 \begin{array}{ccc}
 N_{\mathrm{mut}}\oplus\ZZ^3
 &\xrightarrow{\ \mathrm{id}\oplus\kappa\ }&
 N_{\mathrm{mut}}\oplus L_{\mathrm{fr}}\\[-1mm]
 \big\downarrow&&\big\downarrow\\[-1mm]
 N_{\mathrm{mut}}&\xrightarrow{\ \mathrm{id}\ }&N_{\mathrm{mut}},
 \end{array}
\end{equation}
where the vertical arrows forget coefficient directions.  Put
\[
 N_{u,\zeta}=N_{\mathrm{mut}}\oplus L_{\mathrm{fr}},
 \qquad
 N_u=N_{\mathrm{mut}}\oplus L_\partial.
\]
For $g\in N_u$, let $\widetilde\theta_{(g,0)}$ be the geometric-coefficient
theta function with zero $e_\zeta$-coordinate, and put
\[ \theta_g^-:=\operatorname{ev}_{-1} \bigl(\widetilde\theta_{(g,0)}\bigr). \]

\begin{lemma}[Change of coefficients for theta functions]
\label{lem:change-of-coefficients-theta}
After the frozen rescaling used to pass from normalized $y$-coordinates to
\eqref{eq:zeta-family-exchanges}, the map
$\mathrm{id}\oplus\kappa$ in
\eqref{eq:coefficient-lattice-diagram} is a linear morphism of seed data.
Normalize a principal-coefficient theta function and its image under this
change of coefficients to have the same initial mutable monomial.  Then
\begin{equation}
\label{eq:change-of-coefficients-theta-map}
 \varphi_\zeta\bigl(\vartheta^{\mathrm{prin}}_{(a,0)}\bigr)
 =\widetilde\theta_{(a,0)}
 \qquad(a\in N_{\mathrm{mut}}).
\end{equation}
Moreover, for $q\in L_{\mathrm{fr}}$, translation in a frozen direction gives
\begin{equation}
\label{eq:frozen-theta-translation}
 \widetilde\theta_{(a,q)}=X^q\widetilde\theta_{(a,0)}.
\end{equation}
Hence finiteness for principal coefficients implies finiteness for
$\Sigma_{u,\zeta}$ on the coefficient torus.
\end{lemma}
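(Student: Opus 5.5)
The plan is to work inside the GHKK framework of seeds with coefficients, where a change of coefficients given by a lattice map compatible with the exchange data transports scattering diagrams and broken lines.

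\emph{Step 1: $\mathrm{id}\oplus\kappa$ is a morphism of seed data.} I will check this directly on the exchange matrix. For the principal Markov seed, the exchange in direction $k$ is
\[ x_k x_k' = y_k\prod_{b_{ki}>0} x_i^{b_{ki}} + \prod_{b_{ki}<0} x_i^{-b_{ki}}. \]
Applying $\varphi_\zeta$ from \eqref{eq:principal-coefficient-specialization} turns $y_k$ into the frozen monomial $X^{\kappa(\epsilon_k)}$. After multiplying both sides by a frozen unit, this gives exactly the $k$-th row of $B_{u,\zeta}$ restricted to the frozen columns. Concretely, the frozen part of row $k$ of $B_{u,\zeta}$ should be $\kappa(\epsilon_k)$ up to a common shift of both monomials. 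Checking the three rows against \eqref{eq:coefficient-exponent-map} is a finite computation:
\begin{itemize}
\item[] row 1 has frozen part $(-1,1,-1,0,0)$;
\item[] row 2 has frozen part $(1,-1,0,1,1)$;
\item[] row 3 has frozen part $(0,0,1,-1,0)$.
\end{itemize}
These agree with $\kappa(\epsilon_1),\kappa(\epsilon_2),\kappa(\epsilon_3)$. The ``frozen rescaling'' is the normalization that moves the common unit factor, so the rows match on the nose. It follows that the induced map on the dual lattices preserves the skew form on the mutable part and sends $\epsilon_k$-directions to the $p^*(e_k)$ directions. In other words, the diagram \eqref{eq:coefficient-lattice-diagram} commutes with the maps $p^*$ used to define wall-crossing functions.

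\emph{Step 2: transport of the scattering diagram and theta functions.} The consistent scattering diagram for the principal seed has walls with functions $1+\cdots$ in monomials $z^{(p^*(n),\epsilon_n)}$. Pushing forward along $\varphi_\zeta$ sends each wall function to the corresponding wall function for $\Sigma_{u,\zeta}$ on the coefficient torus. This is because both diagrams are the unique consistent completions of the same initial walls, and the initial walls match by Step 1. Broken lines are then carried to broken lines: bending at a wall multiplies by a monomial of the wall function, and $\varphi_\zeta$ is a ring map on monomials compatible with the exponents. Broken lines with initial exponent $(a,0)$ therefore map bijectively onto those with initial exponent $(a,0)$, after the chosen normalization of the initial mutable monomial. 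Summing over broken lines gives \eqref{eq:change-of-coefficients-theta-map}.

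\emph{Step 3: frozen translation.} Frozen directions lie in the kernel of the form pairing with the mutable part. Adding $q\in L_{\mathrm{fr}}$ to a broken line's initial exponent therefore does not change its bends, and every final monomial is multiplied by $X^q$. This gives \eqref{eq:frozen-theta-translation}.

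\emph{Step 4: finiteness.} By Zhou's theorem, the principal-coefficient Markov theta functions are finite Laurent polynomials. Their images under the ring map $\varphi_\zeta$ are then finite, and multiplying by $X^q$ preserves finiteness.

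The main obstacle is Step 2: making precise that pushforward of a consistent diagram along a non-injective coefficient map remains consistent and equals the diagram of $\Sigma_{u,\zeta}$. I would cite the GHKK treatment of changing coefficients (Section 7 of GHKK, and the analogous statement in CMMM), and verify only its hypothesis, which is Step 1.
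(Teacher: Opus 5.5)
Your proposal follows the paper's proof. The paper also checks that $B_{u,\zeta}=[B_{\mathrm{Mar}}\mid\kappa^{\mathsf T}]$, which is exactly your three row checks. It then cites functoriality of theta functions under linear morphisms of seed data as \cite[Theorem~4.9]{CMMM}, where you sketch the scattering-diagram transport before citing. Like you, it gets \eqref{eq:frozen-theta-translation} because frozen characters cross no walls, and it takes finiteness from Zhou.

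One small correction: $\kappa$ is injective. In a relation $\sum_k c_k\kappa(\epsilon_k)=0$, the $e_\zeta$-coordinate forces $c_2=0$, and then $c_3=c_1=0$. So the non-injectivity you flag in Step~2 does not arise here. The genuinely non-injective map is $\operatorname{ev}_{-1}$, which the paper treats separately in the next proposition via \cite[Theorem~3.10]{CMMM}.
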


\begin{proof}
The exponent vectors in \eqref{eq:coefficient-exponent-map} are those
of the three monomials in
\eqref{eq:principal-coefficient-specialization}.  In the mutable-row
convention, the principal extended matrix is
$[B_{\mathrm{Mar}}\mid I_3]$, whereas
\[ B_{u,\zeta}=[B_{\mathrm{Mar}}\mid\kappa^{\mathsf T}]. \]
Thus, after clearing the common frozen units,
$\mathrm{id}\oplus\kappa$ satisfies the defining seed-datum identities
and is the linear morphism inducing $\varphi_\zeta$.  Functoriality of
theta functions under linear morphisms of seed data
\cite[Theorem~4.9]{CMMM} gives
\eqref{eq:change-of-coefficients-theta-map}; the stated normalization removes
the frozen monomial introduced by the rescaling.  A frozen character has no
walls, so multiplication by $X^q$ translates the theta index and gives
\eqref{eq:frozen-theta-translation}.  The finiteness statement follows.
\end{proof}

\begin{proposition}[Specialization on the coefficient torus]
\label{prop:specialized-theta-span}
Every $\widetilde\theta_{(g,r)}$ is a finite Laurent polynomial.  The
functions $\theta_g^-$, $g\in N_u$, are valuatively and linearly
independent, and their $\kk$-span is an algebra.  If
\[ \cL_t^{-,\mathrm{tor}} :=\cL_t^-[(u_2u_3u_4u_{13})^{-1}], \qquad t=\circ,1,5,7, \]
then
\[ \theta_g^-\in \bigcap_{t=\circ,1,5,7}\cL_t^{-,\mathrm{tor}} \qquad(g\in N_u). \]
\end{proposition}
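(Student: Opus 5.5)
The plan is to prove the three assertions in order: finiteness, independence and closure under multiplication, and finally membership in the four localized Laurent rings.

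\emph{Finiteness.} By Zhou's theorem \cite{Zhou}, every theta function $\vartheta^{\mathrm{prin}}_{(a,p)}$ of the principal-coefficient Markov seed $\Sigma_{\mathrm{Mar}}^{\mathrm{prin}}$ is a finite Laurent polynomial. By \cref{lem:change-of-coefficients-theta}, $\widetilde\theta_{(a,0)}=\varphi_\zeta(\vartheta^{\mathrm{prin}}_{(a,0)})$. This is the image of a Laurent polynomial under a monomial map into $K_\zeta^{\mathrm{tor}}[u_1^{\pm1},u_5^{\pm1},u_7^{\pm1}]$, so it is again finite. Then \eqref{eq:frozen-theta-translation} gives $\widetilde\theta_{(a,q)}=X^q\widetilde\theta_{(a,0)}$, which covers every index $(g,r)$.

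\emph{Independence and closure.} Here I would invoke the specialization results of \cite{CMMM}. After the change of coefficients, the structure constants of the $\widetilde\theta$ lie in $\kk[\zeta^{\pm1}]$. Write $\widetilde\theta_{(g,0)}=X^{(g,0)}F_g(\hat y)$ with $F_g\in 1+\ZZ_{\ge0}[\hat y]$, where $\hat y_i=\varphi_\zeta(y_i)$. Evaluating at $\zeta=-1$ keeps the leading monomial $\mathbf u^{g}$ with coefficient $1$. In fact the other monomials $\mathbf u^{g}\hat y^{\,n}$, $n\ne 0$, have distinct exponents, because the mutable part of $B_{\mathrm{Mar}}n$ is...

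This is where the main obstacle appears. The Markov matrix is degenerate: the vector $(1,1,1)$ lies in its kernel. So exponent separation must come from the full matrix $B_{u,\zeta}$ restricted to $N_u$, that is, with $\zeta$ specialized. One needs $[B_{\mathrm{Mar}}\mid\kappa^{\mathsf T}|_{L_\partial}]^{\mathsf T}$ to be injective on $\ZZ^3$, i.e.\ the $7\times3$ matrix (ignoring the $\zeta$ column) to have rank three. I would check this directly by computing a $3\times3$ minor from the columns $u_2,u_3,u_4,u_{13}$. The submatrix for $u_2,u_4,u_{13}$ is
\[
\begin{bmatrix}-1&-1&0\\1&0&1\\0&1&-1\end{bmatrix},
\]
with determinant $-1\cdot(0-1)+1\cdot(1-0)=2\ne0$. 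This injectivity means $\theta^-_g$ has a unique lowest term $\mathbf u^g$ for a suitable order compatible with the $\hat y$-cone. Distinct $g$ give distinct lowest terms, so the $\theta^-_g$ are valuatively and hence linearly independent.

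For closure, products $\widetilde\theta_{(g,0)}\widetilde\theta_{(h,0)}$ expand as finite sums of $\widetilde\theta_{(k,r)}$ with coefficients in $\kk[\zeta^{\pm1}]$. The $\zeta$-shifts are absorbed via \eqref{eq:frozen-theta-translation}, since $X^{re_\zeta}=\zeta^r$, and these become $(-1)^r$ at $\zeta=-1$. Applying $\operatorname{ev}_{-1}$ then shows that the $\kk$-span is an algebra.

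\emph{Membership.} Theta functions with all frozen variables inverted are universally Laurent, a standard consequence of GHKK for the cluster family over the coefficient torus. Hence $\widetilde\theta_{(g,0)}$ lies in each $\cL_t(\zeta)[(u_2u_3u_4u_{13})^{-1}]$ for $t=\circ,1,5,7$. Reducing modulo $\zeta+1$ sends each such ring onto $\cL_t^{-,\mathrm{tor}}$, which gives the stated membership.
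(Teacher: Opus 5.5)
Your finiteness, closure, and membership steps agree with the paper's proof. Finiteness comes from Zhou's theorem, \cref{lem:change-of-coefficients-theta}, and frozen translation. Closure comes from $\operatorname{ev}_{-1}(\widetilde\theta_{(g,r)})=(-1)^r\theta_g^-$. Membership comes from Laurentness of finite theta functions in every chart, followed by reduction modulo $\zeta+1$.

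The gap is the independence step. First, your minor is miscomputed:
\[
 \det\begin{bmatrix}-1&-1&0\\1&0&1\\0&1&-1\end{bmatrix}
 =(-1)(0-1)-(-1)(-1-0)+0=0.
\]
This is not an accident of the chosen columns. The three rows of $B_{u,\zeta}$ sum to $(0,0,0,0,0,0,0,1)$. Once the $\zeta$-column is deleted, every $3\times3$ minor therefore vanishes, and $B_u^{\mathsf T}$ has kernel $\ZZ(1,1,1)$; this is exactness at $\ZZ^3$ in \cref{prop:D5-Gale-duality}. Equivalently, $\hat y_1\hat y_2\hat y_3=\zeta$, which is the constant $-1$ on the fiber. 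The frozen columns $u_2,u_3,u_4,u_{13}$ never break the Markov degeneracy. Only the $\zeta$-column does, and that is exactly the column you specialize away.

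Consequently, the terms $\mathbf u^g\hat y^{\,n+k(1,1,1)}$, $k\ge0$, of $\widetilde\theta_{(g,0)}$ all land on the single monomial $\mathbf u^{g+B_u^{\mathsf T}n}$, with signs $(-1)^{n_2+k}$, and they can cancel. Even the coefficient of $\mathbf u^g$ in $\theta_g^-$ is $\sum_{k\ge0}(-1)^kc_{(k,k,k)}$, which need not be $1$. Your ordering argument would also need $B_u^{\mathsf T}n$ to be positive for every nonzero $n\ge0$. That is impossible, because $B_u^{\mathsf T}(1,1,1)=0$. So distinct lowest terms are not established, and neither is valuative or linear independence.

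The missing ingredient is the coefficient-specialization theorem \cite[Theorem~3.10]{CMMM}, which the paper invokes. Cluster scattering functions satisfy Assumption~3.9 of \cite{CMMM}. Moreover, $\operatorname{ev}_{-1}$ sends each coefficient monomial of $K_\zeta^{\mathrm{tor}}$ to a unit of $K_u^{\mathrm{tor}}$, hence to a non-zero-divisor. Under these hypotheses, the finite theta functions keep their mutable tropicalizations and remain valuatively and linearly independent. This holds even though their exact monomial supports need not survive the specialization, as noted in the proof of \cref{lem:boundary-support-computation}.
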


\begin{proof}
For the once-punctured-torus Markov seed with principal coefficients, Zhou
proves the full Fock--Goncharov conjecture
\cite[Theorem~1.1]{Zhou}.  Thus every integral mutable label gives a finite
theta Laurent polynomial, the theta parametrization uses the full integral
tropical lattice, and the middle and canonical theta algebras coincide.  By
\cref{lem:change-of-coefficients-theta}, every theta function of
$\Sigma_{u,\zeta}$ on the coefficient torus is therefore finite.

Apply $\operatorname{ev}_{-1}$.  Every coefficient monomial in
$K_\zeta^{\mathrm{tor}}$ maps to a nonzero unit of
$K_u^{\mathrm{tor}}$, hence to a non-zero-divisor.  Cluster scattering
functions satisfy Assumption~3.9 of \cite{CMMM}, so
\cite[Theorem~3.10]{CMMM} shows that the finite theta functions retain their
mutable tropicalizations and remain valuatively and linearly independent.
Writing a label in $N_{u,\zeta}$ as $(g,r)$, with
$g\in N_u$ and $r\in\ZZ$ denoting the coefficient of $e_\zeta$,
translation in a frozen direction gives
\[ \operatorname{ev}_{-1}\bigl(\widetilde\theta_{(g,r)}\bigr) =(-1)^r\theta_g^-. \]
The image of the ordinary middle algebra, equal here to the canonical
algebra, is therefore the algebra spanned by the $\theta_g^-$ on the
coefficient torus.  Finally, every finite theta function is Laurent in every ordinary
cluster chart; specialization gives the asserted membership in the four Laurent rings over $K_u^{\mathrm{tor}}$.
\end{proof}

Let $D_4,D_5,D_6,D_7$ be the four frozen boundary divisors, and define
\[ \Mu :=\operatorname{span}_{\kk}\left\{ \theta_g^-\ \middle|\ \ord_{D_f}(\theta_g^-)\ge0\text{ for }4\le f\le7 \right\}. \]

\begin{proposition}
\label{prop:boundary-regular-theta-algebra}
The theta functions regular along all four divisors form a basis of $\Mu$, and
\[ \Mu\subseteq\Uneg. \]
\end{proposition}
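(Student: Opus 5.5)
The statement has two parts: a basis statement for $\Mu$ and the inclusion $\Mu\subseteq\Uneg$. I will take them in that order, using only the valuative independence from \cref{prop:specialized-theta-span} and the fact that the four frozen divisors are the zero loci of frozen variables appearing polynomially in the four Laurent rings.

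\emph{Basis.} By \cref{prop:specialized-theta-span}, the $\theta_g^-$ are linearly independent, so the regular ones are independent and by definition span $\Mu$. What must be shown is that an arbitrary element of $\Mu$, expanded as a finite combination $f=\sum c_g\theta_g^-$, has only regular summands; equivalently, that the regular span is closed under these expansions. For this I use valuative independence along each $D_f$, $f=4,\dots,7$. Here $\ord_{D_f}$ is the $u_f$-adic valuation on $K_u^{\mathrm{tor}}$-Laurent polynomials in an initial chart, i.e.\ the tropical pairing with the frozen coordinate. Valuative independence says
\[
\ord_{D_f}\Bigl(\sum c_g\theta_g^-\Bigr)=\min_{c_g\ne0}\ord_{D_f}(\theta_g^-).
\]
Hence a combination is regular along $D_f$ exactly when each summand with $c_g\neq0$ is, so regularity is tested one theta function at a time, as noted after \eqref{eq:middle-theta-basis}. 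Together with closure of the span of all $\theta_g^-$ under multiplication, this also shows that $\Mu$ is a subalgebra: the product of regular functions is regular, and its theta expansion then has only regular terms. The identity $\ord_{D_f}(\theta_g^-)=(\vartheta_f^\vee)^{\mathrm{trop}}(g)$ comes from \cite[Theorem~3.10]{CMMM}, which preserves tropicalizations under $\operatorname{ev}_{-1}$.

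\emph{Inclusion.} Let $\theta=\theta_g^-$ be regular along $D_4,\dots,D_7$. By \cref{prop:specialized-theta-span}, $\theta\in\cL_t^{-,\mathrm{tor}}$ for each $t\in\{\circ,1,5,7\}$. Fix a chart $t$. Its ring $\cL_t^-$ is a Laurent polynomial ring in three cluster variables over the polynomial ring $K_u=\kk[u_2,u_3,u_4,u_{13}]$, so
\[
\cL_t^-=\bigcap_{f}\bigl(\cL_t^{-,\mathrm{tor}}\cap (\cL_t^-)_{(u_f)}\bigr),
\]
that is, an element of $\cL_t^{-,\mathrm{tor}}$ lies in $\cL_t^-$ iff it has nonnegative $u_f$-adic order for each of the four frozen primes. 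The crux is that $\ord_{D_f}$ computed in chart $t$ equals $\ord_{D_f}$ computed in the initial chart. This holds because the exchange binomials in \eqref{eq:zeta-family-exchanges} at $\zeta=-1$ are not divisible by any $u_f$: each has two monomials, and no frozen variable divides both. Hence the mutation maps are isomorphisms on the localizations at the generic points of $D_f$, and $D_f$ is a single well-defined divisor on all four charts. It follows that $\theta\in\cL_t^-$ for every $t$, so $\theta\in\Uneg$. By linearity, $\Mu\subseteq\Uneg$.

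The main obstacle is the identification of $\ord_{D_f}(\theta_g^-)$ with the tropical condition and its compatibility with specialization at $\zeta=-1$. The sign could in principle create cancellation in $u_f$-leading terms. The way around this is that $\operatorname{ev}_{-1}$ only multiplies frozen monomials by $\pm1$, so the $u_f$-degrees of individual monomials are unchanged. Cancellation among monomials is then excluded by the valuative independence already granted by \cite[Theorem~3.10]{CMMM}.
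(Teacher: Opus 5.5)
Your proposal is correct and follows the paper's proof. Valuative independence from \cref{prop:specialized-theta-span} gives termwise regularity along each $D_f$ and closure of $\Mu$ under multiplication, and reading $\ord_{D_f}$ as the minimal $u_f$-exponent in each of the four Laurent charts gives $\Mu\subseteq\Uneg$. Your check that no exchange binomial is divisible by a frozen variable is a useful addition, since it justifies the chart-independence of $\ord_{D_f}$ that the paper leaves implicit. Your closing paragraph on tropicalizations is not needed for this statement. That identification is the content of \cref{lem:boundary-support-computation}, which uses theta reciprocity in addition to \cite[Theorem~3.10]{CMMM}.
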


\begin{proof}
By \cref{prop:specialized-theta-span}, the span of all specialized theta
functions is an algebra and the functions are valuatively independent.
Hence the order of a finite linear combination along any $D_f$ is the
minimum of the orders of its nonzero theta terms.  Such a combination is
regular along all four boundaries exactly when each theta term is regular.
This proves the basis assertion; closure under multiplication follows
because products of regular functions are regular.

On each cluster chart, $\ord_{D_f}$ is the minimum exponent of the frozen
variable $u_f$.  A theta function regular along all four $D_f$ has no
negative frozen exponent in any of the four Laurent charts.  The Laurent expansions from \cref{prop:specialized-theta-span} therefore
belong to the nonlocalized rings $\cL_t^-$.  Hence $\Mu\subseteq\Uneg$.
\end{proof}

We henceforth write $\theta_g=\theta_g^-$.
Let $e_1,\ldots,e_7$ be the standard basis in the variable order
\eqref{eq:markov-variable-order}.  The theta functions dual to the four frozen divisors have the following
exponent supports:
\begin{equation*}
 \begin{aligned}
 A_4={}&\{e_4,e_1+e_4,e_1+e_3+e_4,
          e_1+2e_3+e_4,e_1+e_2+2e_3+e_4\},\\
 A_5={}&\{e_5,e_2+e_5,e_1+e_2+e_5\},\\
 A_6={}&\{e_6,e_1+e_6,e_1+e_3+e_6\},\\
 A_7={}&\{e_7,e_3+e_7,e_2+e_3+e_7\}.
 \end{aligned}
\end{equation*}
The resulting cone is
\begin{equation}
\label{eq:explicit-cover-cone}
 \Xi=\left\{g\in\RR^7\ \middle|\
 \langle\alpha,g\rangle\ge0
 \text{ for every }\alpha\in A_4\cup A_5\cup A_6\cup A_7\right\}.
\end{equation}

\begin{lemma}\label{lem:boundary-support-computation}
After specialization at $\zeta=-1$, the theta functions dual to the four
frozen divisors have tropicalizations
\[ (\vartheta_f^\vee)^{\mathrm{trop}}(g) =\min_{\alpha\in A_f}\langle\alpha,g\rangle, \qquad 4\le f\le7. \]
The middle algebra therefore has theta basis
\[ \{\theta_g\mid g\in\Xi\cap\ZZ^7\}. \]
\end{lemma}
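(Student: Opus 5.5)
The plan is to compute each dual theta function $\vartheta_f^\vee$ explicitly as a finite Laurent polynomial with positive coefficients. From its support we then read off the tropicalization, and \eqref{eq:middle-theta-basis} gives the basis statement. The dual seed has exchange matrix $-B_{u,\zeta}^{\mathsf T}$ restricted to the mutable block, and its frozen rows are determined by the columns of $B_{u,\zeta}$ for $u_2,u_3,u_4,u_{13}$. The function $\vartheta_f^\vee$ is the theta function of the frozen direction $e_f$. It is a global monomial on every chart where the frozen variable $f$ is a cluster variable of the associated principal/$\mathcal X$-type seed, and in general it is the unique theta function whose initial exponent is $e_f$.

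To compute it, I will use the broken-line description in the Markov scattering diagram. By Zhou's theorem \cite{Zhou}, already invoked in \cref{prop:specialized-theta-span}, all theta functions for the principal Markov seed are finite. The frozen label $e_f$ pairs with the mutable directions through the column $\kappa^{\mathsf T}$ of $B_{u,\zeta}$, i.e.\ through $p^*(e_f)$. I will track the broken lines with initial direction $e_f$ from a generic basepoint in the positive chamber. Each bend across a wall $\mathfrak d$ with function $1+z^{v}$ multiplies by a binomial term, so the monomials produced are $e_f$, $e_f+e_i$, and so on. Concretely, I expect the chain $A_4$ to arise from successive crossings of walls with normals $e_1$, then $e_3$ (twice, since the Markov multiplicity is $2$), then $e_2$. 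The supports $A_5,A_6,A_7$ will arise as shorter chains of length three. The coefficients are positive integers, being sums of broken-line monomials. A consistency check is that evaluating $\vartheta_f^\vee$ in each of the three adjacent seeds of the dual must give a Laurent polynomial, and this determines the support. The \texttt{Kron322.py} task can verify this mechanically.

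Next I pass to $\zeta=-1$. In the geometric family the dual functions have coefficients in $\ZZ_{\ge0}$. Under $\operatorname{ev}_{-1}$ a monomial may acquire a sign $(-1)^r$, but \cite[Theorem~3.10]{CMMM}, applied exactly as in \cref{prop:specialized-theta-span}, preserves tropicalizations. Alternatively, the order of $\theta_g$ along $D_f$ equals the pairing $\vartheta_f^{\vee,\mathrm{trop}}(g)$ computed before specialization, because each $D_f$ lies over the coefficient torus where $\zeta$ is a unit and no cancellation between distinct exponents can occur. The min-plus formula then gives $(\vartheta_f^\vee)^{\mathrm{trop}}(g)=\min_{\alpha\in A_f}\langle\alpha,g\rangle$. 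Finally, \cref{prop:boundary-regular-theta-algebra} shows that $\theta_g\in\Mu$ if and only if $\ord_{D_f}\theta_g\ge0$ for all $f$. Combined with the identification $\ord_{D_f}\theta_g=(\vartheta_f^\vee)^{\mathrm{trop}}(g)$ from \cite{GHKK}/\cite{CMMM}, this yields the basis $\{\theta_g\mid g\in\Xi\cap\ZZ^7\}$.

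I expect the main obstacle to be certifying that the lists $A_f$ are complete. Because the Markov scattering diagram is dense, broken lines could a priori bend infinitely often. I would first bound the $g$-degree using the grading by weight: the exponents in $\vartheta_f^\vee$ must have weight pairing dictated by the frozen character. After that, I would verify that mutation of the candidate polynomial through the four relevant charts stays Laurent. Together with uniqueness of the theta function having a given initial exponent and pointedness, this pins down $\vartheta_f^\vee$ without tracking the whole diagram.
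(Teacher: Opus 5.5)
Your last two steps agree with the paper. These are the passage to $\zeta=-1$ through \cite[Theorem~3.10]{CMMM}, and the deduction of the basis from \cref{prop:boundary-regular-theta-algebra} once $\ord_{D_f}\theta_g=(\vartheta_f^\vee)^{\mathrm{trop}}(g)$ is known; the paper takes that identity from theta reciprocity, \cite[Theorem~5.19]{CMMM}. The gap is in the step that carries the content: determining the supports $A_f$. You only predict the chains, and the completeness certificate you propose does not work, for three reasons.

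\emph{No grading can bound the number of bends.} Homogeneity of the initial wall functions $1+X_k$ forces $\deg X_k=0$ for $k=1,2,3$. On the $\mathcal A$-side this is the fact that $B_uW=0$ and that each row of $B_u$ has mutable entries summing to zero. So the exchange ratios have trivial weight and trivial mutable degree.

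\emph{Pointedness and Laurentness do not single out $\vartheta_f^\vee$.} Take $c\,\vartheta^\vee_{e_f+n}$ with $n$ a nonzero nonnegative combination of $e_1,e_2,e_3$. Adding it preserves pointedness at $e_f$ and Laurentness in the initial and three adjacent charts. But it can enlarge the Newton polytope, which is all that $\min_\alpha\langle\alpha,g\rangle$ sees; for $f=5$, already $e_1+e_5\notin\operatorname{conv}A_5$. There is no uniqueness statement for pointed elements to appeal to.

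\emph{Your remark that $\vartheta_f^\vee$ is a monomial in suitable charts does not help.} Since $(1,1,1)B_u=0$ (\cref{prop:D5-Gale-duality}), every frozen column of $B_u$ has entry sum zero. This sum is invariant under mutation of the Markov block, so no frozen column ever becomes sign-coherent. Hence no seed is optimized for any $f$, $\vartheta_f^\vee$ is a monomial in no cluster chart, and the optimized-seed arguments of \cite[\S9]{GHKK} do not apply.

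Your worry about the dense part of the diagram is therefore justified, and as written the plan leaves $A_f$, hence $\Xi$, undetermined.

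The paper does not search and then certify; it derives the supports from a finite computation. For $f=4,5,6,7$ it fixes the words $(1,3,2)$, $(2,1)$, $(1,3)$, $(3,2)$. It then uses mutation equivariance of broken lines \cite[Proposition~3.6]{GHKK} to transport the broken line representing $\vartheta_f^\vee$ from the chamber reached by the word back to the initial chamber. The successive boundary-column entries $b_{k,f}$ along the word give the bend multiplicities: $(-1,-2,1)$ for $f=4$ and $(-1,-1)$ in the other cases. A bend of multiplicity $r$ in direction $k$ replaces an exponent $\alpha$ by the support of $X^\alpha(1+X_k)^r$. This produces, for example, $X_4^{-1}\vartheta_4^\vee=1+X_1+2X_1X_3+X_1X_3^2+X_1X_2X_3^2$ and $X_5^{-1}\vartheta_5^\vee=1+X_2+X_1X_2$.

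Your predicted chain for $A_4$ (bends across $e_1$, then $e_3$ with multiplicity two, then $e_2$) is exactly the first word. What you lack is an argument that generates the chain, not a test applied to it afterwards.

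A smaller point: your alternative treatment of $\zeta=-1$ (``no cancellation between distinct exponents'') is no substitute for \cite[Theorem~3.10]{CMMM}. The danger is cancellation among contributions to the same exponent once signs $(-1)^r$ appear. The paper notes that exact supports need not survive the specialization; only mutable tropicalizations do.
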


\begin{proof}
Work first in the ordinary family.  Mutation equivariance of broken lines
\cite[Proposition~3.6]{GHKK} transports the straight broken line representing a frozen theta function
from an adjacent chamber to the initial chamber.  The required mutation sequences and boundary-column entries are
\[
\begin{array}{c|c|c|l}
 f&\text{word}&\text{successive }b_{k,f}
 &X_f^{-1}\vartheta_f^\vee\\ \hline
 4&(1,3,2)&(-1,-2,1)
  &1+X_1+2X_1X_3+X_1X_3^2+X_1X_2X_3^2\\
 5&(2,1)&(-1,-1)&1+X_2+X_1X_2\\
 6&(1,3)&(-1,-1)&1+X_1+X_1X_3\\
 7&(3,2)&(-1,-1)&1+X_3+X_2X_3.
\end{array}
\]
Indeed, after projecting to the mutable exponent lattice, a bend of
multiplicity $r$ in direction $k$ replaces an exponent $\alpha$ by
the support of $X^\alpha(1+X_k)^r$.  The only bend of multiplicity two is
the middle step in the first row.  Thus the table gives the mutable supports
$A_f-e_f$ before coefficient specialization.

Exact monomial support need not survive coefficient specialization.  Only
the mutable tropicalization enters the boundary inequalities, and
\cite[Theorem~3.10]{CMMM} shows that it is unchanged under the
composite specialization used above.  Restoring the fixed boundary monomial
$X_f$ gives
\[ (\vartheta_f^\vee)^{\mathrm{trop}}(g) =\min_{\alpha\in A_f}\langle\alpha,g\rangle. \]
Theta reciprocity \cite[Theorem~5.19]{CMMM} identifies this tropicalization
with the boundary valuation:
\[ \ord_{D_f}(\theta_g) =\min_{\alpha\in A_f}\langle\alpha,g\rangle. \]
Thus $\theta_g$ is regular along every frozen boundary exactly when
$g\in\Xi$.  The basis assertion follows from
\cref{prop:boundary-regular-theta-algebra}.
\end{proof}

\begin{lemma}
\label{lem:Xi-Hilbert-basis}
The Hilbert basis of $\Xi\cap\ZZ^7$ consists of the following sixteen
vectors:
\begin{equation}
\label{eq:Xi-Hilbert-basis}
 \begin{aligned}
 \mathcal H_1={}&\{e_1,e_2,e_3,2e_1-e_3+e_7\},\\
 \mathcal H_0={}&\{e_4,e_5,e_6,e_7,
 -e_1+e_2+e_4+e_6,
 -e_2+e_3+e_5, e_1-e_2+e_5+e_7,
 e_1-e_3+e_4+e_7\},\\
 \mathcal H_{-1}={}&\{-e_1+e_4+e_5+e_6,
 -e_2+e_4+e_5+e_7, -e_3+2e_4+e_6+e_7,
 -2e_2+e_3+2e_5+e_7\}.
 \end{aligned}
\end{equation}
The subscript is the mutable degree $g_1+g_2+g_3$.
\end{lemma}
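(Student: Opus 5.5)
The plan has three parts: write the cone by explicit inequalities, check that the sixteen vectors lie in it, and then prove that they generate the lattice points and are irreducible.

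\emph{The inequalities.} In the variable order \eqref{eq:markov-variable-order}, each $\alpha\in A_4\cup A_5\cup A_6\cup A_7$ gives one inequality. Hence $g\in\Xi$ exactly when
\[
\begin{gathered}
 g_4\ge0,\quad g_4+g_1\ge0,\quad g_4+g_1+g_3\ge0,\quad g_4+g_1+2g_3\ge0,\quad g_4+g_1+g_2+2g_3\ge0,\\
 g_5\ge0,\quad g_5+g_2\ge0,\quad g_5+g_1+g_2\ge0,\qquad
 g_6\ge0,\quad g_6+g_1\ge0,\quad g_6+g_1+g_3\ge0,\\
 g_7\ge0,\quad g_7+g_3\ge0,\quad g_7+g_2+g_3\ge0.
\end{gathered}
\]
Substituting each of the sixteen vectors into these fourteen forms shows that all of them lie in $\Xi$. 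This is routine.

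\emph{Generation.} I would argue by induction on the frozen part $g_4+g_5+g_6+g_7$.
\begin{enumerate}
\item If $g_1,g_2,g_3\ge0$, then $g$ is a sum of $e_1,e_2,e_3$ and the frozen vectors $e_4,\dots,e_7$, so it lies in the semigroup.
\item Otherwise some mutable coordinate is negative. In that case I would exhibit an element $h\in\mathcal H_0\cup\mathcal H_{-1}$ with a negative mutable entry such that $g-h\in\Xi$.
\item Since $g-h$ has strictly smaller frozen part, induction finishes the argument.
\end{enumerate}
The natural choice of $h$ is driven by the sign pattern of $(g_1,g_2,g_3)$:
\begin{itemize}
\item for $g_1<0$, use $-e_1+e_4+e_5+e_6$ or $-e_1+e_2+e_4+e_6$;
\item for $g_2<0$, use $-e_2+e_3+e_5$, $-e_2+e_4+e_5+e_7$, $e_1-e_2+e_5+e_7$ or $-2e_2+e_3+2e_5+e_7$;
\item for $g_3<0$, use $-e_3+2e_4+e_6+e_7$, $e_1-e_3+e_4+e_7$ or $2e_1-e_3+e_7$.
\end{itemize}
In each case one checks that the fourteen forms stay nonnegative on $g-h$, using the inequalities already satisfied by $g$.

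\emph{The main obstacle} is this case analysis: choosing $h$ so that every inequality survives, especially the long ones $g_4+g_1+g_2+2g_3\ge0$ and $g_7+g_2+g_3\ge0$. A cleaner alternative I would try first is to triangulate. Compute the extreme rays of $\Xi$ by dualizing the fourteen inequalities. Then use the cluster-chamber structure: $\Xi$ is covered by the $g$-vector cones of the seeds reachable by the words in the proof of \cref{lem:boundary-support-computation}, intersected with $\Xi$. One then checks that each maximal piece is a unimodular simplicial cone whose rays are among the sixteen vectors. Unimodularity is a determinant $\pm1$ check, and it gives generation immediately. The \texttt{Kron322.py} fan and semigroup tasks would certify this finite computation.

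\emph{Minimality.} The mutable degree $g_1+g_2+g_3$ is not a valid grading for this purpose, since it takes negative values. Instead I would use the total frozen degree $g_4+g_5+g_6+g_7$ together with the mutable degree. Suppose an element $v$ of the list decomposes as $v=a+b$ with $a,b\in\Xi\cap\ZZ^7$ both nonzero.
\begin{itemize}
\item If $v$ has frozen part $0$ or $1$, the components are forced, and one checks directly that no such decomposition exists. For example, the only lattice points of $\Xi$ with zero frozen part are those with $g_1,g_2,g_3\ge0$, since $g_4=0$ forces $g_1\ge0$ and so on.
\item The remaining candidates have frozen part $2$, $3$ or $4$. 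They form a finite enumeration: list the lattice points of $\Xi$ with frozen part at most $4$ and bounded mutable entries, and check that none of the sixteen vectors is a sum of two of them.
\end{itemize}
Finally, the stated subscript is just the evaluation of $g_1+g_2+g_3$ on each vector.
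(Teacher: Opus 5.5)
Your fourteen inequalities are correct and the membership check is routine. Your minimality enumeration can also be made finite: in $v=a+b$, the inequalities $g_1+g_6\ge0$, $g_2+g_5\ge0$, $g_3+g_7\ge0$ bound every mutable entry of $a$ and $b$ below by minus the frozen part of $v$, and hence bound them above as well.

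The gap is generation. In your primary route, step (2) asks for some $h$ with $g-h\in\Xi$ for every lattice point with a negative mutable coordinate. That step is the whole content of the lemma. Your sign-pattern list neither specifies $h$ nor shows that the fourteen forms stay nonnegative, and you flag this yourself as the main obstacle. Your fallback fails outright. In every seed of the Markov mutation class, each exchange relation has the form $x_kx_k'=(\text{frozen})\,x_i^2+(\text{frozen})\,x_j^2$. So every cluster variable has mutable degree $1$, and every $g$-vector cone of a seed projects into $\{m_1+m_2+m_3>0\}\cup\{0\}$. But the mutable projection of $\Xi$ is all of $\RR^3$. Moreover, eight of the sixteen vectors have nonzero mutable part of degree $0$ or $-1$, for example $-e_1+e_2+e_4+e_6$ and $-e_1+e_4+e_5+e_6$. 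No family of cluster chambers can cover $\Xi$ or have these vectors as rays, let alone the finitely many chambers reached by the words $(1,3,2),(2,1),(1,3),(3,2)$.

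The missing idea is to triangulate by the domains of linearity of the boundary functions rather than by cluster chambers. Write $g=(m,d)$ with $m\in\ZZ^3$ and $d\in\ZZ^4$. Your inequalities say exactly $d\ge\psi(m)$, where $\psi_f(m)=\max_{\alpha\in A_f}\bigl(-\langle\alpha-e_f,m\rangle\bigr)$. So every lattice point of $\Xi$ is a minimal lift $(m,\psi(m))$ plus a nonnegative integral combination of $e_4,\ldots,e_7$.

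The common linearity fan of $\psi$ is obtained from the orthant fan by successive star subdivisions along $\langle-e_1,e_2\rangle$, $\langle e_1,-e_3\rangle$, $\langle e_1,e_1-e_3\rangle$, $\langle-e_2,e_3\rangle$, $\langle e_1,-e_2\rangle$ and $\langle-e_2,-e_2+e_3\rangle$. It is a complete unimodular fan with twelve rays and twenty maximal cones. On each cone, lattice points are nonnegative integral combinations of the rays and $\psi$ is linear. Hence minimal lifts decompose into the twelve ray lifts, and these together with $e_4,\ldots,e_7$ are exactly the sixteen vectors.

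Irreducibility then needs no enumeration. $\psi$ is convex, hence subadditive, so any decomposition of a ray lift forces equality. That puts both summands in one linearity cone, and therefore on the ray.
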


\begin{proof}
Write $g=(m_1,m_2,m_3,d_4,d_5,d_6,d_7)$.  The inequalities defining
$\Xi$ are equivalent to
\[
 \begin{aligned}
 d_4&\ge\max\{0,-m_1,-m_1-m_3,-m_1-2m_3,
                    -m_1-m_2-2m_3\},\\
 d_5&\ge\max\{0,-m_2,-m_1-m_2\},\\
 d_6&\ge\max\{0,-m_1,-m_1-m_3\},\\
 d_7&\ge\max\{0,-m_3,-m_2-m_3\}.
 \end{aligned}
\]
Let $\psi\colon\RR^3\to\RR^4$ be the vector of the four maximum
functions.  Then $\Xi$ is the integral epigraph of $\psi$: every point
is a minimal lift $(m,\psi(m))$, plus a nonnegative combination of the
vertical rays $e_4,\ldots,e_7$.

A comparison of the affine forms shows that the common fan of linearity of
$\psi$ is obtained from the orthant fan by the successive star
subdivisions along
\[
 \begin{gathered}
 \langle-e_1,e_2\rangle,\qquad
 \langle e_1,-e_3\rangle,\qquad
 \langle e_1,e_1-e_3\rangle,\\
 \langle-e_2,e_3\rangle,\qquad
 \langle e_1,-e_2\rangle,\qquad
 \langle-e_2,-e_2+e_3\rangle.
 \end{gathered}
\]
The order matters: each cone is present when it is subdivided.
Every new ray is the sum of the primitive generators of a unimodular
two-dimensional cone, so unimodularity is preserved.  The resulting complete fan has twenty maximal cones; $\psi$ is linear on
each cone.  The \texttt{fan}
task in \path{Kron322.py} lists the cones and checks their
determinants, active linear forms, and twelve nonvertical lifts.

On a unimodular cone every lattice point is a nonnegative integral
combination of its primitive rays, and $\psi$ is linear.  Taking minimal
lifts therefore expresses every lattice point of $\Xi$ as a nonnegative
integral combination of the sixteen vectors.  A minimal lift of a
primitive fan ray is indecomposable: equality in the subadditivity
inequalities would force both projected summands into one cone of linearity
and hence onto the same extremal ray; primitivity then forces one summand to
have zero projection, and minimality forces it to vanish.  The four vertical
unit rays are indecomposable.  Hence the sixteen vectors form the
Hilbert basis.
\end{proof}

To pass from theta indices to triple weights, order the rows by
$(u_1,u_5,u_7;u_2,u_3,u_4,u_{13})$ and the columns by
$(\lambda_1,\lambda_2,\lambda_3;\mu_1,\mu_2;\nu_1,\nu_2)$.  The
resulting weight matrix is
\begin{equation}
\label{eq:weight-matrix-W}
 W=
 \left[\begin{array}{ccc|cc|cc}
 1&0&0&1&0&1&0\\
 1&1&1&2&1&2&1\\
 2&1&1&2&2&3&1\\
 1&1&0&1&1&2&0\\
 1&1&0&2&0&1&1\\
 2&0&0&1&1&1&1\\
 2&2&2&3&3&3&3
 \end{array}\right].
\end{equation}
Thus $\wt(\theta_g)=gW$.  The cone $\Xi$ is pointed: if both
$g$ and $-g$ lie in $\Xi$, the inequalities coming from
\[ e_4,e_5,e_6,e_7,e_1+e_4,e_2+e_5,e_3+e_7 \]
force all seven coordinates of $g$ to vanish.  Since
$\Mu\subseteq\Uneg\subseteq\UR$ by
\cref{prop:boundary-regular-theta-algebra,prop:upper-contained-in-invariants},
every triple-weight component of $\Mu$ is contained in the corresponding
finite-dimensional Kronecker weight space of $\UR$.  For the lowest-term
seed valuation fixed above, $\initer(\theta_g)=\mathbf x^g$, and its leaves
are one-dimensional.  Hence
\cref{lem:hilbert-theta-generation,lem:Xi-Hilbert-basis} shows that the sixteen Hilbert-basis theta functions form a Khovanskii
basis of $\Mu$ and, in particular, generate it.

\begin{proposition}
\label{prop:twelve-generators-in-middle}
Write $f\doteq g$ when two nonzero homogeneous functions differ by a
scalar in $\kk^\times$.  For $g\in\mathcal H_1\cup\mathcal H_0$, the
table records the correspondence $\theta_g\doteq u_i$.
\begingroup
\setlength{\arraycolsep}{3pt}
\renewcommand{\arraystretch}{1.12}
\begin{equation}
\label{eq:theta-generator-table}
\begin{array}{cc|cc@{\qquad}cc}
\multicolumn{2}{c|}{\mathcal H_1}
 &\multicolumn{4}{c}{\mathcal H_0}\\
 g&\theta_g&g&\theta_g&g&\theta_g\\ \hline
 e_1&u_1&e_4&u_2&-e_1+e_2+e_4+e_6&u_{11}\\
 e_2&u_5&e_5&u_3&-e_2+e_3+e_5&u_6\\
 e_3&u_7&e_6&u_4&e_1-e_2+e_5+e_7&u_{12}\\
 2e_1-e_3+e_7&u_8&e_7&u_{13}&e_1-e_3+e_4+e_7&u_{10}
\end{array}
\end{equation}
\endgroup
Thus every generator of $\UR$ except $u_9$ belongs to $\Mu$.
\end{proposition}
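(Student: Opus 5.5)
For each of the twelve labels $g\in\mathcal H_1\cup\mathcal H_0$ I will compute the weight $gW$ from \eqref{eq:weight-matrix-W}. I will then show that the corresponding triple-weight component of $\UR$ is one-dimensional and spanned by the listed $u_i$. The relation $\theta_g\doteq u_i$ then follows from two facts already available. First, $\theta_g$ is a nonzero homogeneous element of $\Mu$ of weight $gW$. Second, $\Mu\subseteq\Uneg\subseteq\UR$ by \cref{prop:boundary-regular-theta-algebra,prop:upper-contained-in-invariants}. A nonzero vector in a one-dimensional space is a scalar multiple of its spanning vector.

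The steps are as follows.

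\emph{Cluster and frozen variables.} For $g=e_1,e_2,e_3$ and $g=e_4,\dots,e_7$, theta functions of initial cluster or frozen monomials are those monomials. This holds because $\initer(\theta_g)=\mathbf x^g$, there are no walls at these labels, and \eqref{eq:frozen-theta-translation} covers the frozen directions. It gives $u_1,u_5,u_7,u_2,u_3,u_4,u_{13}$ directly, with no weight argument needed.

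\emph{The label $2e_1-e_3+e_7$.} This is the $g$-vector of the one-step variable $u_7'=u_8$ from \eqref{eq:zeta-family-exchanges}. That exchange has $u_7u_7'=u_4u_5^2+u_1^2u_{13}$ with lowest term $u_1^2u_{13}$. Cluster variables are theta functions, and $\operatorname{ev}_{-1}$ leaves this exchange unchanged. Alternatively, the weight $gW=(211;31;22)=\wt(u_8)$ can be used with the weight argument below.

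\emph{The remaining four labels in $\mathcal H_0$.} The four labels are
\[
-e_1+e_2+e_4+e_6,\quad -e_2+e_3+e_5,\quad e_1-e_2+e_5+e_7,\quad e_1-e_3+e_4+e_7.
\]
I will compute $gW$ for each and verify that the results are
\[
(321;33;42),\quad (210;21;21),\quad (321;42;33),\quad (221;32;32).
\]
These are exactly the weights of $u_{11},u_6,u_{12},u_{10}$. \cref{lem:relevant-one-dimensional-weights} states that each of these weight components of $\UR$ is one-dimensional, spanned by the named invariant, and this yields the table.

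The final sentence of the statement follows because $\Mu$ is an algebra containing the theta functions above. The table therefore puts twelve of the thirteen generators of \cref{thm:thirteen-generate} into $\Mu$; the exception is $u_9$.

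The main obstacle is arithmetic rather than conceptual. The checks that $gW$ matches the listed weights are the error-prone part, especially for labels with negative entries. As a sanity check, I will compare the initial monomial $\mathbf x^g$ with the Laurent expressions for $u_6,u_{10},u_{11},u_{12}$ obtained from \eqref{eq:basic-relations} by dividing by $u_1$ and using the exchange relations. I expect the lowest terms to match: for example, $u_6$ should have lowest term $u_3u_7u_5^{-1}$, as suggested by $u_3u_7=u_2u_8+u_5u_6$.
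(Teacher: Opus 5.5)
Your proposal is correct and follows the paper's own proof: the seven initial labels and the one-step label $2e_1-e_3+e_7$ (the $g$-vector of $u_7'=u_8$) are handled through their $g$-vectors, and the four remaining labels of $\mathcal H_0$ by computing $gW$ (your four weights are right) and applying \cref{lem:relevant-one-dimensional-weights} together with $\Mu\subseteq\Uneg\subseteq\UR$. One minor caveat: your alternative weight route for $u_8$ would need its own uniqueness check, because $(211;31;22)$ is not among the weights listed in that lemma (the check does succeed).
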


\begin{proof}
For the degree-one labels and the four frozen basis vectors, the assertion
follows from their $g$-vectors.  The remaining four theta functions lie in
$\Mu\subseteq\Uneg\subseteq\UR$.  Their $W$-weights, in the order shown in
the last two columns of the table, are
\[ (321;33;42),\quad(210;21;21),\quad (321;42;33),\quad(221;32;32). \]
The corresponding invariant weight spaces are one-dimensional by
\cref{lem:relevant-one-dimensional-weights}; hence each nonzero theta function
is a scalar multiple of the indicated normalized invariant.  The scalars play
no role in algebra generation.
\end{proof}

Put
\[ \udisc:=u_9=u_{220,22}^{22}, \qquad \chi_\Delta=\wt(\udisc)=(2,2,0;2,2;2,2). \]

\begin{lemma}\label{lem:discriminant-chart-identities}
The discriminant satisfies
\begin{equation}
\label{eq:discriminant-chart-identities}
 \begin{aligned}
 u_1^2\udisc&=u_6^2+4u_2u_3u_4,\\
 u_5^2\udisc&=u_{10}^2-4u_2u_3u_{13}.
 \end{aligned}
\end{equation}
\end{lemma}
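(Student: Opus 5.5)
The first identity is already one of the basic relations in \eqref{eq:basic-relations}, namely $u_1^2u_9=u_6^2+4u_2u_3u_4$, with $u_9=\udisc$. Those relations were verified by substituting the quotient coordinates of \cref{prop:birational-quotient-chart}. So nothing new is needed there, and the work lies in the second identity.

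For the second identity I would eliminate $u_{10}$ and $u_9$ using the other basic relations, and then reduce everything to the six relations with $u_1$ inverted. Since $\UR$ is a domain (\cref{thm:thirteen-generate}, factoriality in \cref{lem:four-chart-coprimality}), it suffices to prove
\[ u_1^2\bigl(u_5^2\udisc-u_{10}^2+4u_2u_3u_{13}\bigr)=0 . \]
Expanding, the three terms become:
\begin{itemize}
\item $u_1^2u_5^2\udisc=u_5^2(u_6^2+4u_2u_3u_4)$;
\item $u_1^2u_{10}^2=(2u_2u_8+u_5u_6)^2$;
\item $4u_2u_3u_1^2u_{13}=4u_2u_3(u_7u_8-u_4u_5^2)$.
\end{itemize}
Summing, the $u_5^2u_6^2$ terms cancel and the $4u_2u_3u_4u_5^2$ terms cancel. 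What remains is
\[ -4u_2^2u_8^2-4u_2u_5u_6u_8+4u_2u_3u_7u_8=4u_2u_8\,(u_3u_7-u_2u_8-u_5u_6), \]
which vanishes by the first basic relation $u_3u_7=u_2u_8+u_5u_6$. Cancelling $u_1^2$ in the domain gives the second identity.

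A cross-check is that both sides have weight $2\wt(u_5)+\chi_\Delta=(442;64;64)$, which agrees with $2\wt(u_{10})$ and with $\wt(u_2u_3u_{13})$.

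The only potential obstacle is sign and normalization consistency of $u_{10}$ and $u_9$. These normalizations are fixed precisely by \eqref{eq:basic-relations}, so the argument uses only those relations. As an independent check, one could substitute the $s$-expressions from \eqref{eq:u-in-terms-of-s}, for example $u_9=s_9^2-4s_5s_4$ and $u_{10}=s_9s_8+2s_5s_7$, and verify the identity in $\SR$ directly.
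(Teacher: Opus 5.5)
Your proof is correct and follows the paper's route. Both multiply by $u_1^2$, rewrite each term using the basic relations \eqref{eq:basic-relations}, and cancel $u_1^2$ in the domain $\UR$. The only cosmetic difference is that the paper first combines the first and third relations into $u_1u_{10}=u_2u_8+u_3u_7$ and $u_5u_6=u_3u_7-u_2u_8$, so the computation becomes a difference of squares, whereas you expand with $u_1u_{10}=2u_2u_8+u_5u_6$ and invoke the first relation at the end.
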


\begin{proof}
The first identity is the second relation in
\eqref{eq:basic-relations}.  The first and third relations there give
\[ u_1u_{10}=u_2u_8+u_3u_7, \qquad u_5u_6=u_3u_7-u_2u_8. \]
Hence, using also the last basic relation,
\[
 \begin{aligned}
 u_1^2\bigl(u_{10}^2-\udisc u_5^2\bigr)
 &=(u_2u_8+u_3u_7)^2-u_5^2(u_6^2+4u_2u_3u_4)\\
 &=4u_2u_3(u_7u_8-u_4u_5^2)
  =4u_1^2u_2u_3u_{13}.
 \end{aligned}
\]
Canceling $u_1^2$ in the domain $\UR$ proves the second identity.
\end{proof}

\begin{theorem}
\label{thm:middle-plus-discriminant}
One has
\begin{equation}
\label{eq:upper-middle-extension}
 \UR=\Uneg=\Mu[\udisc].
\end{equation}
Moreover, $\udisc\notin\Mu$.
\end{theorem}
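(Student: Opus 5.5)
We will prove the chain of inclusions
\[ \Mu[\udisc]\subseteq\Uneg\subseteq\UR\subseteq\Mu[\udisc], \]
and then show separately that $\udisc\notin\Mu$.

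The middle inclusion is \cref{prop:upper-contained-in-invariants}. For the first inclusion, \cref{prop:boundary-regular-theta-algebra} gives $\Mu\subseteq\Uneg$. Since $\Uneg$ is an intersection of rings, it remains only to check that $\udisc\in\Uneg$. We will verify that $u_9$ lies in each of the four Laurent rings $\cL_t^-$.

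\begin{itemize}
\item In $\cL_\circ^-$ and $\cL_7^-$, the element $u_1$ is invertible. By the first identity of \cref{lem:discriminant-chart-identities}, $\udisc=u_1^{-2}(u_6^2+4u_2u_3u_4)$. Here $u_6\doteq\theta_{-e_2+e_3+e_5}\in\Mu\subseteq\Uneg$, so $u_6$ is Laurent in every chart. Hence $\udisc\in\cL_\circ^-\cap\cL_7^-$.
\item In $\cL_1^-$, $u_1$ is not inverted but $u_5$ is. The second identity of \cref{lem:discriminant-chart-identities} gives $\udisc=u_5^{-2}(u_{10}^2-4u_2u_3u_{13})$. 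Since $u_{10}\in\Mu$, this places $\udisc$ in $\cL_1^-$.
\item In $\cL_5^-$, $u_1$ is again a unit, so the first identity applies as in the first case.
\end{itemize}

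Thus $\udisc\in\Uneg$.

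For the third inclusion, $\UR\subseteq\Mu[\udisc]$, we use \cref{thm:thirteen-generate}: $\UR$ is generated by $u_1,\dots,u_{13}$. By \cref{prop:twelve-generators-in-middle}, every generator except $u_9=\udisc$ is a nonzero scalar multiple of a theta function in $\Mu$. Since $\Mu$ is an algebra, $\UR=\kk[u_1,\dots,u_{13}]\subseteq\Mu[\udisc]$. Combined with the first two inclusions, this gives equality throughout and proves \eqref{eq:upper-middle-extension}.

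For $\udisc\notin\Mu$, we argue by weights. $\Mu$ has the theta basis $\{\theta_g\mid g\in\Xi\cap\ZZ^7\}$, with $\wt(\theta_g)=gW$. Suppose $\udisc\in\Mu$. Then the weight-$\chi_\Delta$ component of $\Mu$ is nonzero, so there is $g\in\Xi\cap\ZZ^7$ with $gW=\chi_\Delta=(2,2,0;2,2;2,2)$.

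We will rule this out by writing such a $g$ as a nonnegative integral combination of the sixteen Hilbert basis vectors in \cref{lem:Xi-Hilbert-basis}. The weights of the $\mathcal H_1\cup\mathcal H_0$ elements are the weights of $u_1,\dots,u_8,u_{10},\dots,u_{13}$, read off from \eqref{eq:theta-generator-table}. By \cref{lem:relevant-one-dimensional-weights} and the generation of $\UR$, no monomial in these twelve invariants has weight $\chi_\Delta$, since $u_9$ is the only monomial of that weight. It therefore remains to handle decompositions that use some $\mathcal H_{-1}$ element. This is a finite check, and we expect it to be the main obstacle.

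The total polynomial degree of $\chi_\Delta$ is $4$. So we enumerate the $\mathcal H_{-1}$ vectors, compute their weights via $W$, subtract each from $\chi_\Delta$, and verify that the remainder cannot be reached by the semigroup: either the remainder is nondominant, or it is a weight known to be unattainable. For instance, $-e_1+e_4+e_5+e_6$ has weight $(4,2,0;3,3;3,3)$, which already exceeds $\chi_\Delta$ in degree. We expect every $\mathcal H_{-1}$ weight to have degree at least $5$, which would immediately exclude them all.

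A cleaner alternative avoids the enumeration. A theta function of weight $\chi_\Delta$ would span the one-dimensional component $\kk\udisc$, so $\udisc\doteq\theta_g$. Then $\udisc$ would have a single initial Laurent monomial $\mathbf x^g$ in the initial chart. But in that chart $\udisc=u_1^{-2}(u_6^2+4u_2u_3u_4)$, and $u_6=\theta_{-e_2+e_3+e_5}$. Its initial exponent would force $g=2(-e_2+e_3+e_5)-2e_1$ (or $e_4+e_5+e_6-2e_1$, depending on the order). One then checks directly against the inequalities \eqref{eq:explicit-cover-cone} that this $g\notin\Xi$.
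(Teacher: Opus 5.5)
Your proof of $\UR=\Uneg=\Mu[\udisc]$ is the paper's. You use the same chain of inclusions, with $\udisc$ put into $\cL_\circ^-$, $\cL_5^-$, $\cL_7^-$ by the first identity of \cref{lem:discriminant-chart-identities} and into $\cL_1^-$ by the second. Your remark that $u_6,u_{10}\in\Mu\subseteq\Uneg$ makes the numerators Laurent fills in a step the paper leaves implicit.

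For $\udisc\notin\Mu$ you argue differently. The paper parametrizes all solutions of $gW=\chi_\Delta$ as $g=(2t-2,2s-2,2-2s-2t,s+t,2-s-t,s,t)$. It then observes that $g_6,g_7\ge0$, $g_2+g_3+g_7\ge0$, $g_1+g_4\ge0$, $g_3+g_7\ge0$ are incompatible. Beyond the theta basis, this needs only $W$ and five cone inequalities. You instead decompose a putative label in the Hilbert basis of \cref{lem:Xi-Hilbert-basis} and count polynomial degree. This is valid and more explanatory. It shows that $\Mu$ is generated in the weights $\wt(u_i)$, $i\ne9$, and $\wt(u_i)+\chi_\Delta$, $i=1,5,7,8$, and that neither kind can produce $\chi_\Delta$, which anticipates \cref{prop:paired-theta-generators}. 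The price is reliance on the fan computation behind the Hilbert basis and on a separate semigroup search.

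Two steps of your version need repair.
\begin{enumerate}
\item \Cref{lem:relevant-one-dimensional-weights} does not list $\chi_\Delta$, so you must check directly that no product of generators other than $u_9$ has weight $(2,2,0;2,2;2,2)$. Since $\lambda_3=0$, the generators $u_5,u_7,u_8,u_{10},\dots,u_{13}$ are excluded. Then $\lambda_2=2$ forces exactly two factors from $u_2,u_3,u_6$, and $\lambda_1=2$ rules out $u_6$ and any further factor. This leaves $u_2^2$, $u_2u_3$, $u_3^2$, whose $(\mu;\nu)$ are $(2,2;4,0)$, $(3,1;3,1)$, $(4,0;2,2)$; none works.
\item The vector $-e_1+e_4+e_5+e_6$ has weight $(3,2,0;3,2;3,2)$, not $(4,2,0;3,3;3,3)$. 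In general $h_i^-W=h_i^+W+\chi_\Delta$, so the $\mathcal H_{-1}$ weights have degrees $5,7,8,8$. Since all sixteen Hilbert-basis weights have positive degree, none of these can occur in a decomposition of a degree-$4$ weight. This confirms your expectation.
\end{enumerate}

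Your initial-term alternative is not cleaner. It needs $\dim\UR_{\chi_\Delta}=1$, which is the same monomial check. It also needs $\initer(\theta_g)=\mathbf x^g$ for a lowest-term order on the fiber $\zeta=-1$, and that property is delicate there. The rows of $B_{u,\zeta}$ sum to the $\zeta$-unit vector, so the exponent vectors in $\ZZ^7$ of the three exchange ratios $\prod_i x_i^{b_{ki}}$ sum to zero. Each of the three one-step cluster variables is $\mathbf x^g$ times a binomial in one exchange ratio, so they cannot all have initial exponent equal to their label. Both of your candidate exponents do violate $g_1+g_4\ge0$, but the valuation input is unsupported. The paper's argument, like your Hilbert-basis one, uses no valuation at this step, so keep that version.
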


\begin{proof}
By \cref{prop:twelve-generators-in-middle}, the twelve generators
$u_i$ with $i\ne9$ belong to $\Mu\subseteq\Uneg$.  It remains to show that $\udisc$ belongs to all four Laurent charts.  In
$\cL_\circ^-$, $\cL_5^-$, and $\cL_7^-$, the element $u_1$ is
invertible, so the first identity in
\eqref{eq:discriminant-chart-identities} expresses $\udisc$ as a Laurent
function there.  In $\cL_1^-$, the element $u_5$ is invertible, and the
second identity does the same.  Thus $\udisc\in\Uneg$.

The thirteen-generator theorem and
\cref{prop:boundary-regular-theta-algebra,prop:upper-contained-in-invariants}
give the inclusions
\[ \UR=\kk[u_1,\ldots,u_{13}] \subseteq\Mu[\udisc] \subseteq\Uneg \subseteq\UR, \]
which proves \eqref{eq:upper-middle-extension}.

For the final assertion, solve $gW=\chi_\Delta$.  Every solution has the form
\[ g=(2t-2,\,2s-2,\,2-2s-2t,\,s+t,\,2-s-t,\,s,\,t). \]
The inequalities $g_6,g_7\ge0$,
$g_2+g_3+g_7\ge0$,
$g_1+g_4\ge0$, and $g_3+g_7\ge0$, all belonging to
\eqref{eq:explicit-cover-cone}, force successively
$s,t\ge0$, $t=0$, $s\ge2$, and $s\le1$, a contradiction.
Hence there is no theta function in $\Mu$ of weight $\chi_\Delta$.  Since
the theta functions form a homogeneous basis, the corresponding weight
space is zero, whereas $\udisc\ne0$.
\end{proof}

\section{A polyhedral formula}
\label{sec:polyhedral}

\subsection{A pairing of theta generators}

Define the \emph{mutable degree} to be the $\ZZ$-grading determined by
\[ \deg u_1=\deg u_5=\deg u_7=1, \qquad \deg u_2=\deg u_3=\deg u_4=\deg u_{13}=0. \]
The three exchange relations are homogeneous for this assignment, so mutable
degree defines a $\ZZ$-grading of the cluster family, its fiber at
$\zeta=-1$, and $\Mu[\udisc]$.  In initial Laurent coordinates,
\[ \deg\theta_g=g_1+g_2+g_3, \qquad \deg\udisc=-2. \]
Order the positive- and negative-degree parts of the
Hilbert basis in \eqref{eq:Xi-Hilbert-basis} as
\[
\begin{aligned}
 h_1^+&=e_1,&
 h_1^-&=-e_1+e_4+e_5+e_6,&
 h_2^+&=e_2,&
 h_2^-&=-e_2+e_4+e_5+e_7,\\
 h_3^+&=e_3,&
 h_3^-&=-e_3+2e_4+e_6+e_7,&
 h_4^+&=2e_1-e_3+e_7,&
 h_4^-&=-2e_2+e_3+2e_5+e_7.
\end{aligned}
\]
By \eqref{eq:theta-generator-table},
\[ \theta_{h_1^+}\doteq u_1,\qquad \theta_{h_2^+}\doteq u_5,\qquad \theta_{h_3^+}\doteq u_7,\qquad \theta_{h_4^+}\doteq u_8. \]

\begin{proposition}\label{prop:paired-theta-generators}
For $1\le i\le4$, there is a scalar $c_i\in\kk^\times$ such that
\begin{equation}
\label{eq:paired-theta-generators}
 \theta_{h_i^-}=c_i\udisc\,\theta_{h_i^+}.
\end{equation}
Up to nonzero scalars, the four generators of degree $-1$ are
$\udisc u_1,\udisc u_5,\udisc u_7,\udisc u_8$.
\end{proposition}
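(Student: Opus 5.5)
The plan is a weight-space argument, as in \cref{prop:twelve-generators-in-middle}. First compute the weights. Since $\wt(\theta_g)=gW$ is linear in $g$, each difference $h_i^--h_i^+$ should have weight $\chi_\Delta$. For example,
\[ h_1^--h_1^+=-2e_1+e_4+e_5+e_6. \]
From the rows of $W$ its weight is
\[ -2(100;10;10)+(110;11;20)+(110;20;11)+(200;11;11)=(220;22;22)=\chi_\Delta. \]
The other three differences are
\[ -2e_2+e_4+e_5+e_7,\qquad -2e_3+2e_4+e_6+e_7,\qquad -2e_1-2e_2+2e_3+2e_5. \]
Each is checked the same way. (The last one relies on $\wt(u_8)=\wt(u_7)+(000;1,-1;-1,1)$.) This shows that $\theta_{h_i^-}$ has weight $\chi_\Delta+\wt(u_j)$ for $u_j=u_1,u_5,u_7,u_8$ respectively. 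These are the weights $(320;32;32)$, $(331;43;43)$, $(431;44;53)$ and $(431;53;44)$.

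Next use \cref{lem:relevant-one-dimensional-weights}. It says that each of these four weight components of $\UR$ is one-dimensional, spanned by $u_9u_1$, $u_9u_5$, $u_9u_7$ and $u_9u_8$ respectively. By \cref{prop:boundary-regular-theta-algebra,prop:upper-contained-in-invariants}, $\theta_{h_i^-}\in\Mu\subseteq\Uneg\subseteq\UR$. It is homogeneous of the computed weight and nonzero, since it is a basis element. Therefore
\[ \theta_{h_i^-}=c_i\,\udisc\,u_j \]
for some $c_i\in\kk^\times$. Finally, \eqref{eq:theta-generator-table} gives $u_j\doteq\theta_{h_i^+}$, and absorbing that scalar into $c_i$ yields \eqref{eq:paired-theta-generators}. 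The last sentence of the statement is then a restatement.

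The only step with real content is the weight bookkeeping: checking that $h_i^-W-h_i^+W=\chi_\Delta$ for all four $i$, and that the resulting weights are exactly the entries in the table of \cref{lem:relevant-one-dimensional-weights}. I expect no obstacle beyond this. The nonvanishing of theta functions and their membership in $\UR$ are already established, and one-dimensionality was proved for precisely these weights. As a consistency check, mutable degrees agree: $-1=1+(-2)$.
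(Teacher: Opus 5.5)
Your proposal is correct and is essentially the paper's argument: you check $h_i^-W=h_i^+W+\chi_\Delta$, land in the four one-dimensional weight spaces of \cref{lem:relevant-one-dimensional-weights}, and use that $\theta_{h_i^-}$ is a nonzero homogeneous element of $\UR$ there. The only cosmetic difference is that the paper compares $\theta_{h_i^-}$ directly with the nonzero element $\udisc\theta_{h_i^+}$ of the same line, whereas you pass through $\udisc u_j$ and then invoke $\theta_{h_i^+}\doteq u_j$.
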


\begin{proof}
Multiplication by the weight matrix gives
\[ h_i^-W=h_i^+W+\chi_\Delta. \]
More explicitly, the four common weights and the corresponding monomials are
\[
\begin{array}{c|c|c}
 i&h_i^-W&\text{monomial of this weight in }\kk[u_1,\ldots,u_{13}]\\ \hline
 1&(320;32;32)&\udisc u_1\\
 2&(331;43;43)&\udisc u_5\\
 3&(431;44;53)&\udisc u_7\\
 4&(431;53;44)&\udisc u_8.
\end{array}
\]
By \cref{lem:relevant-one-dimensional-weights}, each monomial
spans the corresponding one-dimensional weight space of $\UR$.  Both
$\theta_{h_i^-}$ and
$\udisc\theta_{h_i^+}$ are nonzero elements of that space, which proves
\eqref{eq:paired-theta-generators}.
\end{proof}

\begin{corollary}
\label{cor:multiplication-by-discriminant}
Let $f\in\Mu$ be homogeneous of mutable degree $d$.  If
$0\le r\le d$ or $d \le r \le 0$, then
\[ \udisc^rf\in\Mu. \]
\end{corollary}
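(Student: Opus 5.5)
The plan is to reduce the statement to monomials in the sixteen Hilbert-basis theta functions and then use the pairing of \cref{prop:paired-theta-generators} to trade factors of mutable degree $1$ for factors of degree $-1$, or the reverse. The key point is that, in a monomial of mutable degree $d$, the number of degree-one factors is at least $d$ when $d\ge0$, and the number of degree $-1$ factors is at least $-d$ when $d\le0$.

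First, I will check that $\Mu$ is graded by mutable degree with homogeneous generators. Each $\theta_g$ is homogeneous of mutable degree $g_1+g_2+g_3$: its initial monomial is $\mathbf x^g$, and all other terms differ from it by monomials in the $\mathbf y$-variables. The three exchange relations are homogeneous, so these $\mathbf y$-monomials have mutable degree zero. Because the $\theta_g$ form a basis of $\Mu$, $\Mu$ is a graded subalgebra of the graded Laurent ring $\cL_\circ^-$. By \cref{lem:hilbert-theta-generation,lem:Xi-Hilbert-basis}, $\Mu$ is generated by $\theta_h$ with $h\in\mathcal H_1\cup\mathcal H_0\cup\mathcal H_{-1}$, and $\theta_h$ has degree $1$, $0$ or $-1$ according to the subscript. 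Write $f$ as a polynomial in these sixteen generators and take the degree-$d$ component of this expression. Since $f$ is homogeneous of degree $d$, we may then assume that $f$ is a linear combination of monomials $\prod\theta_{h}$ each of mutable degree $d$. It therefore suffices to treat a single such monomial $m$. Suppose $m$ contains $a$ factors from $\mathcal H_1$ and $b$ factors from $\mathcal H_{-1}$; then $a-b=d$.

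Case $0\le r\le d$. Here $a=d+b\ge d\ge r$, so $m$ has at least $r$ factors $\theta_{h_{i}^+}$. By \eqref{eq:paired-theta-generators}, $\udisc\theta_{h_i^+}=c_i^{-1}\theta_{h_i^-}$ with $c_i\in\kk^\times$. Distributing the $r$ copies of $\udisc$ over $r$ of these factors exhibits $\udisc^r m$ as a nonzero scalar times a monomial in Hilbert-basis theta functions, which lies in $\Mu$.

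Case $d\le r\le 0$. Here $b=a-d\ge -d\ge -r$, so $m$ has at least $|r|$ factors $\theta_{h_i^-}=c_i\udisc\theta_{h_i^+}$. Inside $\Frac(\UR)$, multiplying by $\udisc^{r}=\udisc^{-|r|}$ cancels one factor of $\udisc$ from each of $|r|$ such factors, replacing $\theta_{h_i^-}$ by $c_i\theta_{h_i^+}$. The result is again a scalar times a monomial in the generators, so $\udisc^r m\in\Mu$. In both cases, summing over the monomials of $f$ gives $\udisc^r f\in\Mu$.

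The only step requiring care is the homogeneity reduction: I must ensure that a polynomial expression of $f$ in the generators can be replaced by one using only monomials of mutable degree exactly $d$. This works because the grading on $\Mu$ is induced from the Laurent ring and every generator is homogeneous, so projecting to the degree-$d$ component is legitimate. For the case $r<0$, I also use that $\udisc\ne0$ in the domain $\Frac(\UR)$, which justifies the cancellation.
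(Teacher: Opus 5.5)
Your proposal is correct and follows essentially the same argument as the paper. Both write $f$ as a homogeneous polynomial in the sixteen Hilbert-basis theta functions and use $N_+-N_-=d$ to find enough degree-$\pm1$ factors. Both then absorb $\udisc^{r}$ through the pairing $\theta_{h_i^-}=c_i\udisc\,\theta_{h_i^+}$. Your check that each $\theta_g$ has mutable degree $g_1+g_2+g_3$, and that one may project onto the degree-$d$ component, spells out what the paper leaves implicit.
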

\begin{proof}
By \cref{lem:hilbert-theta-generation,lem:Xi-Hilbert-basis}, the algebra
$\Mu$ is generated by eight degree-zero elements, the four
$\theta_{h_i^+}$, and the four $\theta_{h_i^-}$.  Express $f$ as a
homogeneous polynomial in these generators.  For a monomial, let $N_+$ and $N_-$ denote the numbers of degree $1$ and degree $-1$ factors, counted with multiplicity.  Then
$N_+-N_-=d$.  If $d\ge0$, then $N_+\ge d\ge r$, so $r$ degree $1$ factors
may be replaced by their degree $-1$ partners.  If $d<0$, then
$N_-\ge-d\ge-r$, so $-r$ degree $-1$ factors may be replaced by their
degree $1$ partners.  By \eqref{eq:paired-theta-generators}, these
replacements multiply the monomial by $\udisc^r$ up to a nonzero scalar.  Applying the replacement to every
monomial in a homogeneous expression for $f$ proves the claim.
\end{proof}

\subsection{Reduction to the \texorpdfstring{$\ell_0$}{ell0}-slice}

Let
\[ \chi=(\lambda_1,\lambda_2,\lambda_3; \mu_1,\mu_2;\nu_1,\nu_2) \]
be an admissible triple weight.  Put
\begin{equation}
\label{eq:A-and-level}
 A=\lambda_1+2\lambda_2-\mu_1-\nu_1,
 \qquad
 \ell_0=\max\left\{0,\left\lfloor\frac{A+1}{2}\right\rfloor\right\}.
\end{equation}
A weight space with a nondominant triple weight is understood to be zero, and
the corresponding cone slice below is empty.  If $\chi-\ell_0\chi_\Delta$ is not dominant, both sides of
\cref{eq:polyhedral-Kronecker-count} are zero.

\begin{theorem}
\label{thm:polyhedral-Kronecker}
The Kronecker coefficient of $\chi$ is the number of lattice points in a
single slice of the cone:
\begin{equation}
\label{eq:polyhedral-Kronecker-count}
 g_{\lambda,\mu,\nu}
 =\#\left\{g\in\Xi\cap\ZZ^7\ \middle|\
        gW=\chi-\ell_0\chi_\Delta\right\}.
\end{equation}
\end{theorem}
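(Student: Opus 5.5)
The plan is to combine the decomposition $\UR=\Mu[\udisc]$ of \cref{thm:middle-plus-discriminant} with the degree-shifting \cref{cor:multiplication-by-discriminant}. The goal is to show $\UR_\chi=\udisc^{\ell_0}\Muwt{\chi-\ell_0\chi_\Delta}$, and then to count theta functions using the basis of \cref{lem:boundary-support-computation}. Since $\udisc$ and all $\theta_g$ are weight-homogeneous, \cref{thm:middle-plus-discriminant} gives
\[ \UR_\chi=\sum_{r\ge0}\udisc^{\,r}\,\Muwt{\chi-r\chi_\Delta}, \]
where a nondominant weight contributes zero.

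The first step is to express mutable degree through the triple weight. I claim that for every homogeneous element of weight $\chi$ the mutable degree is $-A(\chi)$, with $A=\lambda_1+2\lambda_2-\mu_1-\nu_1$ as in \eqref{eq:A-and-level}. Both sides are additive, so it suffices to check the seven seed variables. From \eqref{eq:thirteen-invariant-weights}, $u_1,u_5,u_7$ give $A=-1$ and $u_2,u_3,u_4,u_{13}$ give $A=0$. As a consistency check, $A(\chi_\Delta)=2=-\deg\udisc$. Hence $\Muwt{\chi-r\chi_\Delta}$ sits in mutable degree $d_r=2r-A$.

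Next, each summand with $r\ne\ell_0$ is absorbed into the $\ell_0$ term, using the two inequalities $A\le 2\ell_0\le A+1$ coming from the definition of $\ell_0$ when $\ell_0>0$.
\begin{itemize}
\item[(i)] Suppose $r>\ell_0$ and $f\in\Muwt{\chi-r\chi_\Delta}$. Then $0\le r-\ell_0\le d_r=2r-A$, because $r\ge\ell_0+1$ and $A\le 2\ell_0+1$. \Cref{cor:multiplication-by-discriminant} gives $\udisc^{r-\ell_0}f\in\Muwt{\chi-\ell_0\chi_\Delta}$, so $\udisc^rf\in\udisc^{\ell_0}\Muwt{\chi-\ell_0\chi_\Delta}$.
\item[(ii)] Suppose $r<\ell_0$; this forces $\ell_0>0$. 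Then $d_r=2r-A\le -1$, and $-(A-2r)\le -(\ell_0-r)\le0$ holds because $\ell_0+r\le A$, which follows from $r\le\ell_0-1$ and $2\ell_0\le A+1$. Applying the corollary with exponent $-(\ell_0-r)$ gives $\udisc^{-(\ell_0-r)}f\in\Muwt{\chi-\ell_0\chi_\Delta}$. Hence $\udisc^rf=\udisc^{\ell_0}\cdot\udisc^{-(\ell_0-r)}f$ again lies in the $\ell_0$ term.
\end{itemize}
The reverse inclusion is trivial, so $\UR_\chi=\udisc^{\ell_0}\Muwt{\chi-\ell_0\chi_\Delta}$. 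If $\chi-\ell_0\chi_\Delta$ is not dominant, the same argument shows $\UR_\chi=0$. The one point to watch is that the negative-exponent case of \cref{cor:multiplication-by-discriminant} is read correctly: it says $\udisc^{r}f\in\Mu$ inside $\Frac(\UR)$, which means $f$ is $\udisc^{-r}$ times an element of $\Mu$. This is what the pairing \eqref{eq:paired-theta-generators} gives when degree $-1$ generators are traded for their degree $1$ partners.

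Finally, multiplication by $\udisc^{\ell_0}$ is injective in the domain $\UR$. Combined with \cref{lem:kronecker-weight-space}, this gives $g_{\lambda,\mu,\nu}=\dim\Muwt{\chi-\ell_0\chi_\Delta}$. By \cref{lem:boundary-support-computation}, $\Mu$ has the homogeneous theta basis $\{\theta_g\mid g\in\Xi\cap\ZZ^7\}$ with $\wt\theta_g=gW$. The dimension is therefore the number of lattice points of $\Xi$ with $gW=\chi-\ell_0\chi_\Delta$, which is finite because $\Xi$ is pointed and the weight space is finite-dimensional. This proves \eqref{eq:polyhedral-Kronecker-count}.

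I expect the main obstacle to be the degree bookkeeping in the first two steps: confirming that mutable degree is exactly $-A$, and that the window for $\ell_0$ makes both the $r>\ell_0$ and $r<\ell_0$ cases fit the hypotheses of \cref{cor:multiplication-by-discriminant}. The rest follows directly from the results already stated.
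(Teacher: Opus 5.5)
Your proof is correct and follows the paper's argument: you decompose $\UR_\chi=\sum_r\udisc^{\,r}\Muwt{\chi-r\chi_\Delta}$, use the degree identity $d_r=2r-A$, absorb every level into level $\ell_0$ via \cref{cor:multiplication-by-discriminant}, and finish with injectivity of multiplication by $\udisc^{\ell_0}$ and the homogeneous theta basis. The only difference is cosmetic: for $r<\ell_0$ you apply the corollary with exponent $-(\ell_0-r)$ and land at level $\ell_0$ in one step, whereas the paper applies it with exponent $d_r$ to reach the reflected level $A-r\ge\ell_0$ and then invokes the first case.
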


\begin{proof}
Calculation from \eqref{eq:weight-matrix-W} shows that every theta
function of weight $\chi-\ell\chi_\Delta$ has mutable degree
\begin{equation}
\label{eq:level-degree}
 d_\ell=2\ell-A.
\end{equation}
By \cref{lem:kronecker-weight-space,thm:middle-plus-discriminant}, the desired
Kronecker coefficient is the dimension of
\[ \Mu[\udisc]_\chi =\sum_{\ell\ge0}\udisc^\ell \Muwt{\chi-\ell\chi_\Delta}. \]
This sum is finite in a fixed triple weight, because
$\chi-\ell\chi_\Delta$ is nondominant for $\ell\gg0$.  Moreover, every
vector in $\Muwt{\chi-\ell\chi_\Delta}$ has the same mutable degree
$d_\ell$, since \eqref{eq:level-degree} expresses mutable degree as a
linear function of the triple weight.  We show that every summand is contained in the summand at level $\ell_0$.

Suppose first that $\ell\ge\ell_0$. Put $r=\ell-\ell_0$. Since $2\ell_0\ge A$,
\[ d_\ell-r=\ell+\ell_0-A\ge0. \]
Thus $0\le r\le d_\ell$, and
\cref{cor:multiplication-by-discriminant} gives
\[ \udisc^r\Muwt{\chi-\ell\chi_\Delta} \subseteq\Muwt{\chi-\ell_0\chi_\Delta}. \]
Multiplication by $\udisc^{\ell_0}$ yields
\[ \udisc^\ell\Muwt{\chi-\ell\chi_\Delta} \subseteq \udisc^{\ell_0}\Muwt{\chi-\ell_0\chi_\Delta}. \]

Suppose now that $\ell<\ell_0$.  Then $\ell_0\ge\ell+1$ and
$A\ge2\ell_0-1$, so
\[ d_\ell=2\ell-A\le2(\ell_0-1)-A<0. \]
By \cref{cor:multiplication-by-discriminant},
\[ \udisc^{d_\ell}\Muwt{\chi-\ell\chi_\Delta} \subseteq \Muwt{\chi-(\ell-d_\ell)\chi_\Delta}. \]
Multiplying by $\udisc^{\ell-d_\ell}$ gives
\[ \udisc^\ell\Muwt{\chi-\ell\chi_\Delta} \subseteq \udisc^{\ell-d_\ell} \Muwt{\chi-(\ell-d_\ell)\chi_\Delta}. \]
Set $\ell^\vee=\ell-d_\ell=A-\ell$.  Since
$\ell^\vee\ge A-(\ell_0-1)\ge\ell_0$, the preceding case applied to
$\ell^\vee$ places this summand in the summand at level $\ell_0$.

It follows that
\[ \Mu[\udisc]_\chi =\udisc^{\ell_0}\Muwt{\chi-\ell_0\chi_\Delta}. \]
Multiplication by $\udisc^{\ell_0}$ is injective because the ambient
invariant ring is a domain.  The theta basis of $\Mu$ is homogeneous and
linearly independent, so the dimension of the last weight space is the
lattice-point count in \eqref{eq:polyhedral-Kronecker-count}.
\end{proof}

\subsection{The finite-sum formula}

Assume that the shifted triple is dominant; otherwise the coefficient is
zero by \cref{thm:polyhedral-Kronecker}.  Set
\begin{equation}
\label{eq:shifted-weight}
 \bar\chi=\chi-\ell_0\chi_\Delta
 =(L_1,L_2,L_3;M_1,M_2;N_1,N_2).
\end{equation}
Thus
\[ L_1+L_2+L_3=M_1+M_2=N_1+N_2. \]
Interchanging the last two partitions if necessary, assume
\[ M_1-M_2\le N_1-N_2, \]
which is equivalent to $M_2\ge N_2$.  Put
\begin{equation}
\label{eq:closed-form-parameters}
 \begin{aligned}
 a&=L_1-L_2,& b&=L_2-L_3,& c&=L_3,\\
 \rho&=M_1-M_2,& h&=M_2-N_2,&d&=M_1+N_1-L_1-2L_2.
 \end{aligned}
\end{equation}
The integer $d$ is the mutable degree of the shifted weight and is
nonnegative by the definition of $\ell_0$.

\begin{theorem}
\label{thm:closed-formula}
For $x\in\ZZ$, set $m_x=b-h-2x$, and
\begin{equation}
\label{eq:closed-sum-bounds}
 \begin{aligned}
 \mathsf L(x)&=\max\left\{
 0,\left\lceil\frac{a-d-x}{2}\right\rceil,N_2-2c-x
 \right\},\\
 \mathsf U(x)&=\min\{0,m_x\}
 +\min\left\{
 a,\left\lfloor\frac{a+h+x}{2}\right\rfloor,M_2-L_2+x
 \right\}.
 \end{aligned}
\end{equation}
Then
\begin{equation}
\label{eq:closed-Kronecker-sum}
 g_{\lambda,\mu,\nu}
 =\sum_{x=0}^{\min\{b,\rho\}}
   \pos{\mathsf U(x)-\mathsf L(x)+1}.
\end{equation}
Consequently, on the lattice of admissible triple weights, these Kronecker
coefficients are piecewise quasi-polynomial with quasiperiod dividing two.
\end{theorem}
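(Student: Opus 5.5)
By \cref{thm:polyhedral-Kronecker}, $g_{\lambda,\mu,\nu}$ is the number of $g\in\Xi\cap\ZZ^7$ with $gW=\bar\chi$. The plan is to parametrize this fiber explicitly and then count lattice points. Since the seven rows of $W$ are linearly independent modulo the linear relation $L_1+L_2+L_3=M_1+M_2=N_1+N_2$ (which cuts the seven weight coordinates down to five independent ones), the solution set of $gW=\bar\chi$ in $\ZZ^7$ is an affine lattice of rank two. First step: solve $gW=\bar\chi$ over $\ZZ$ by Gaussian elimination, exactly as in the computation of the fiber over $\chi_\Delta$ in the proof of \cref{thm:middle-plus-discriminant}. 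The result should write every coordinate as an integral affine function of $(a,b,c,\rho,h,d,N_2,\dots)$ and two free integer parameters. I will choose these parameters so that one of them, $x$, is the coordinate matching the summation index. Natural candidates are $x=g_6$ or $x=g_7$, which are frozen exponents constrained by $g_6,g_7\ge0$, and I will aim for the range $0\le x\le\min\{b,\rho\}$. The other parameter, $y$, is the one counted by $[\mathsf L(x),\mathsf U(x)]$.

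Second step: substitute this parametrization into the fourteen inequalities $\langle\alpha,g\rangle\ge0$, $\alpha\in A_4\cup\cdots\cup A_7$, from \eqref{eq:explicit-cover-cone}. Each becomes a linear inequality $p x+q y\ge r$. I will sort them into three groups:
\begin{enumerate}
\item those not involving $y$, which bound $x$;
\item those giving lower bounds on $y$;
\item those giving upper bounds on $y$.
\end{enumerate}
The coefficients of $y$ should be $\pm1$ or $\pm2$; the $\pm2$ cases produce the ceiling $\lceil(a-d-x)/2\rceil$ and the floor $\lfloor(a+h+x)/2\rfloor$.

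Third step: show that the inequalities bounding $x$ reduce to $0\le x\le\min\{b,\rho\}$, and that many bounds in the other two groups are redundant once dominance of $\bar\chi$, the normalization $M_2\ge N_2$, and $d\ge0$ are used. The surviving lower bounds should be $0$, $\lceil(a-d-x)/2\rceil$ and $N_2-2c-x$. The surviving upper bounds should combine into $\min\{0,m_x\}+\min\{a,\lfloor(a+h+x)/2\rfloor,M_2-L_2+x\}$.

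I expect this redundancy elimination to be the main obstacle. In particular, the additive term $\min\{0,m_x\}$ suggests that the four-term maxima in $A_4$ (for example $-m_1-2m_3$ and $-m_1-m_2-2m_3$) interact so that the upper bound shifts by $m_x$ exactly when $m_x<0$. My first attempt will split into the cases $m_x\ge0$ and $m_x<0$ and verify in each case that the binding constraint is the one claimed. If the parametrization resists, I will use the Hilbert-basis description instead: write $g$ as the minimal lift $(m,\psi(m))$ plus vertical rays, so that the count is over $m\in\ZZ^3$ with $\psi(m)\le$ prescribed frozen values. On each of the twenty linearity cones of $\psi$ this is a polygon count. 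As a sanity check, I will compare against small cases (the ancillary size-$\le12$ check).

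Finally, for fixed $x$ the number of integers $y$ in $[\mathsf L(x),\mathsf U(x)]$ is $[\mathsf U(x)-\mathsf L(x)+1]_+$, and summing over $x$ gives \eqref{eq:closed-Kronecker-sum}. The quasiperiod statement follows because the only non-affine operations are max, min, and halving. Hence on each chamber where a fixed bound is active, the sum of the interval lengths is a polynomial in the parameters and in the parities of the halved quantities, so the quasiperiod divides two.
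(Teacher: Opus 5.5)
Your plan is essentially the paper's proof. The paper solves $gW=\bar\chi$ with free coordinates $x=g_5$ and $y=g_6$ (rather than $g_6$ or $g_7$), which gives $g_4=b-x$ and $g_3=-m_x$; then $g_5\ge0$, $g_4\ge0$ and $g_1+g_2+g_5=\rho-x\ge0$ give the $x$-range, the term $\min\{0,m_x\}$ in $\mathsf U(x)$ comes from three pairs of inequalities differing by multiples of $g_3$ (so no case split on the sign of $m_x$ is needed), and only $x\ge-h-d$ and $y\le\lfloor(a+b-x)/2\rfloor$ are redundant. For the quasiperiod claim you must also fix the zeros of $\mathsf U(x)-\mathsf L(x)+1$, as the paper does; since four branch pairs (for example $a+m_x$ against $\lceil(a-d-x)/2\rceil$) differ with slope $\pm\tfrac32$ in $x$, ``only max, min and halving'' does not by itself rule out endpoints with denominator $3$, so you need a short check, using relations such as $\rho=b+d-h$, that these pairs never produce one.
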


\begin{proof}
To count the points in \eqref{eq:polyhedral-Kronecker-count}, put
$\delta=-g$, so the cone inequalities become
$\langle\alpha,\delta\rangle\le0$ for all
$\alpha\in A_4\cup A_5\cup A_6\cup A_7$, and solve
$-\delta W=\bar\chi$.  The integral solution set is an affine lattice of rank two.  Projection
to
\[ x=-\delta_5, \qquad y=-\delta_6 \]
is an affine lattice isomorphism onto $\ZZ^2$: solving the remaining five
equations over $\ZZ$ gives the inverse
\begin{equation*}
 \begin{aligned}
 \delta_1&=-a-b+h+2x+2y,&
 \delta_2&=a-d-2y,&
 \delta_3&=b-h-2x,\\
 \delta_4&=x-b,\qquad
 \delta_5=-x,&
 \delta_6&=-y,&\delta_7&=x+y+c-N_2.
 \end{aligned}
\end{equation*}
Substitution in the fourteen cone inequalities first gives
\[ 0\le x\le\min\{b,\rho\}. \]
For a fixed $x$, the nonredundant lower bounds on $y$ are
\[ y\ge0, \qquad y\ge\left\lceil\frac{a-d-x}{2}\right\rceil, \qquad y\ge N_2-2c-x, \]
whose maximum is $\mathsf L(x)$.

For the upper bounds, put
\[ C_x=\left\lfloor\frac{a+h+x}{2}\right\rfloor. \]
The six nonredundant inequalities are
\[ y\le a,\quad y\le C_x,\quad y\le M_2-L_2+x, \]
\[ y\le a+m_x,\quad y\le C_x+m_x, \quad y\le M_2-L_2+x+m_x. \]
Their minimum is $\mathsf U(x)$.  The remaining apparent bound
\[ y\le\left\lfloor\frac{a+b-x}{2}\right\rfloor \]
is redundant: before taking floors, its right-hand side is the arithmetic
mean of
\[ \frac{a+h+x}{2} \quad\text{and}\quad \frac{a+2b-h-3x}{2}. \]
Thus, for fixed $x$, the number of integral values of $y$ is
$\pos{\mathsf U(x)-\mathsf L(x)+1}$.  Summing over the allowed values of
$x$ proves \eqref{eq:closed-Kronecker-sum}.

Only division by two occurs in the endpoints.  Subdivide the admissible
parameter space so that the active affine branches in
\eqref{eq:closed-sum-bounds}, the order of their finitely many crossover
points of $x$, and the affine zeros of
$\mathsf U(x)-\mathsf L(x)+1$ are fixed.  On each resulting chamber both
the summation interval and the active branch of the positive-part function are fixed.  The
sum is therefore a finite sum of affine functions over intervals with
parity-dependent endpoints, hence a quasi-polynomial with quasiperiod
dividing two.
\end{proof}

\begin{remark}
The quasiperiod bound is sharp, although it need not be minimal on every
chamber.  For $r\ge0$ and $\lambda=\mu=\nu=(r,r)$, one has $A=r$ and
$\ell_0=\lceil r/2\rceil$.  If $r$ is even, the shifted weight is zero
and the slice consists of the origin; if $r$ is odd, the shifted weight is
nondominant.  Therefore
\[ g_{(r,r),(r,r),(r,r)}=\frac{1+(-1)^r}{2}. \]
Exact period two occurs on this ray, while on other chambers the parity
dependence may cancel and the quasi-polynomial may have period one.
\end{remark}

The \texttt{finite-sum} task in \path{Kron322.py} verifies the affine
inverse and the fourteen substituted inequalities, and compares the formula
with the symmetric-group character formula up to a user-specified size (by
default, twelve).

\begin{corollary}[All $n\times2\times2$ formats]
\label{cor:closed-formula-22n}
Let $n\ge1$, and let $\lambda,\mu,\nu$ be partitions of a common integer
$N$ satisfying
$\ell(\lambda)\le n$ and $\ell(\mu),\ell(\nu)\le2$.
If $\ell(\lambda)>4$, then $g_{\lambda,\mu,\nu}=0$.  Otherwise pad
$\lambda$ to four parts and put $r=\lambda_4$.  If
$\mu_2<2r$ or $\nu_2<2r$, then again
$g_{\lambda,\mu,\nu}=0$.  In all remaining cases set
\[ \bar\lambda=(\lambda_1-r,\lambda_2-r,\lambda_3-r), \qquad \bar\mu=(\mu_1-2r,\mu_2-2r), \qquad \bar\nu=(\nu_1-2r,\nu_2-2r). \]
Apply \cref{eq:A-and-level,eq:shifted-weight,eq:closed-form-parameters,eq:closed-sum-bounds}
to $(\bar\lambda,\bar\mu,\bar\nu)$.  Then
\[ g_{\lambda,\mu,\nu} =\sum_{x=0}^{\min\{b,\rho\}} \pos{\mathsf U(x)-\mathsf L(x)+1}. \]
The formula therefore applies to every Kronecker coefficient of format
$n\times2\times2$.
\end{corollary}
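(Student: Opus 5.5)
The plan is to reduce the $n\times2\times2$ case to the $3\times2\times2$ case already covered by \cref{thm:closed-formula}, using the determinant reduction of \cref{sec:invariants} with $b=c=2$, so $D=bc=4$.

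\textbf{Step 1: lengths greater than four.} Suppose $\ell(\lambda)>4$. By \cref{lem:kronecker-weight-space}, $g_{\lambda,\mu,\nu}$ is the dimension of a weight space of $\UR_{n,2,2}$. For $n\ge4$, \cref{cor:stable-first-factor} shows that only first partitions of length at most $4$ occur. Equivalently, only partitions of length at most $\dim(\kk^2\otimes\kk^2)=4$ appear in the Cauchy decomposition of $\Sym(\kk^n\otimes\kk^2\otimes\kk^2)$. Hence $g_{\lambda,\mu,\nu}=0$.

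\textbf{Step 2: lengths at most four.} Now assume $\ell(\lambda)\le4$, pad $\lambda$ to four parts, and note that $g_{\lambda,\mu,\nu}$ does not depend on the ambient $n$ as long as $n\ge\ell(\lambda)$. Indeed, it is defined purely through the symmetric group, and \cref{lem:kronecker-weight-space} lets us compute it in $\UR_{4,2,2}$. Apply \cref{cor:rectangular-translation} with $b=c=2$ and $r=\lambda_4$.
\begin{itemize}
\item If $\mu_2<2r$ or $\nu_2<2r$, the coefficient vanishes.
\item Otherwise $g_{\lambda,\mu,\nu}=g_{\bar\lambda,\bar\mu,\bar\nu}$.
\end{itemize}
The triple $(\bar\lambda,\bar\mu,\bar\nu)$ consists of partitions: $\bar\lambda$ is dominant, with $\bar\lambda_4=0$ and $\bar\lambda_3=\lambda_3-r\ge0$, and $\bar\mu,\bar\nu$ are dominant because $\mu_2,\nu_2\ge2r$. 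All three have size $N-4r$, and $\ell(\bar\lambda)\le3$. So the reduced triple is admissible.

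\textbf{Step 3: apply the $3\times2\times2$ formula.} Applying \cref{thm:closed-formula} to the admissible triple $(\bar\lambda,\bar\mu,\bar\nu)$ gives the stated finite sum. The convention that a nondominant shifted weight gives zero is inherited from \cref{thm:polyhedral-Kronecker}, and the interchange of $\bar\mu,\bar\nu$ to arrange $M_2\ge N_2$ is harmless because $g$ is symmetric in its last two arguments.

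\textbf{Main obstacle.} The only subtle point is making sure that small $n$ is covered, that is, $n<4$. In that case $\ell(\lambda)\le n\le3$ already, and padding gives $r=0$, so the reduction is trivial and \cref{thm:closed-formula} applies directly. I would also check that the case $n=3$ requires no stabilization, and that for $n\ge4$ the $n$-independence is justified by \cref{cor:stable-first-factor}. After these checks, the corollary follows.
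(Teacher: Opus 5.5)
Your proposal is correct and follows essentially the same route as the paper: vanishing for $\ell(\lambda)>4$ via the Cauchy decomposition (equivalently $\Schur_\lambda(\kk^2\otimes\kk^2)=0$), then rectangular translation with $b=c=2$ and $r=\lambda_4$ to reach an admissible triple with $\ell(\bar\lambda)\le3$, and finally \cref{thm:closed-formula}. The one small difference is how you handle dependence on $n$: you note that $g_{\lambda,\mu,\nu}$ is defined through $\mathfrak S_N$ and is therefore independent of $n$. This disposes of the ambient-dimension question a little more cleanly than the paper, which appeals to \cref{cor:stable-first-factor} and to padding into $\kk^3$.
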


\begin{proof}
If $\ell(\lambda)>4$, then
$\Schur_\lambda(\kk^2\otimes\kk^2)=0$, so the coefficient vanishes.
Assume $\ell(\lambda)\le4$.  Proposition~\ref{prop:det-extension}, equivalently
\cref{cor:rectangular-translation}, with $b=c=2$ and
$r=\lambda_4$ gives zero when $\mu_2<2r$ or $\nu_2<2r$, and
otherwise gives
\[ g_{\lambda,\mu,\nu}=g_{\bar\lambda,\bar\mu,\bar\nu}. \]
The first partition on the right has length at most three, so
\cref{thm:closed-formula} applies.  For $n\ge4$,
\cref{cor:stable-first-factor} shows that no further dependence on the first
ambient dimension occurs.  For $n\le3$, enlarging the first vector space to
$\kk^3$ and padding by zeros leaves the same highest-weight multiplicity.
\end{proof}

\appendix
\raggedbottom

\section{The cluster algebra on the \texorpdfstring{double-$U$}{double-U}-invariant ring}
\label{app:partial-ring}

Only the part of the flagged-Kronecker construction used above is recalled
here.  For positive integers $l_1,l_2,m$, let $K_{l_1,l_2}^m$ be the
flagged $m$-arrow Kronecker quiver
\[ -1\longrightarrow-2\longrightarrow\cdots\longrightarrow-l_1 \overset{m}{\Longrightarrow} l_2\longrightarrow l_2-1\longrightarrow\cdots\longrightarrow1 \]
with dimension vector $\beta(i)=|i|$.  Its semi-invariant ring is
\[ \SI_\beta(K_{l_1,l_2}^m) =\kk[\Rep_\beta(K_{l_1,l_2}^m)]^{\prod_i\SL_{\beta(i)}}. \]
Contracting the two equioriented flag arms gives a multigraded,
$\GL_m$-equivariant isomorphism
\begin{equation}
\label{eq:flagged-Kronecker-transfer}
 \SI_\beta(K_{l_1,l_2}^m)
 \cong
 \kk[\kk^{l_1}\otimes\kk^{l_2}\otimes\kk^m]^{U_{l_1}\times U_{l_2}}.
\end{equation}
One way to see \eqref{eq:flagged-Kronecker-transfer} is to tensor the
coordinate ring of the central representation space with the coordinate
rings of the two basic affine spaces $\GL_{l_i}/U_{l_i}$, and then take
$\GL_{l_1}\times\GL_{l_2}$-invariants.  Equivalently, successive Cauchy
decompositions along the arms retain exactly the highest-weight line at each
central vertex.

For $(l_1,l_2,m)=(3,2,2)$, the right-hand side is
$\SR$.  In the variable order
\eqref{eq:double-initial-cluster}, its extended exchange matrix is
\begin{equation*}
 B_s=
 \begin{bmatrix}
  0& 1&-1&-1& 1& 0& 1&-1\\
 -1& 0& 2& 1& 0&-1& 0& 0\\
  1&-2& 0& 0&-1& 1& 0& 0
 \end{bmatrix}.
\end{equation*}
The rows are indexed by $s_1,s_2,s_3$; the columns are ordered by
all eight variables in \eqref{eq:double-initial-cluster}, with the last five
columns frozen.  The results
of \cite{Fk1,Fk2} give
\[ \SR=\Asup. \]

Let $\epsilon_1,\ldots,\epsilon_8$ be the standard basis in the order
\eqref{eq:double-initial-cluster}.  The noninitial primitive rays of the
$g$-vector cone used here are
\begin{equation*}
 \begin{aligned}
 \gamma_1'&=\epsilon_1-\epsilon_3+\epsilon_6,&
 \gamma_2'&=-\epsilon_2+2\epsilon_3+\epsilon_4,\\
 \gamma_2^\circ&=-\epsilon_2+\epsilon_3+\epsilon_4,&
 \gamma_3'&=-\epsilon_1+\epsilon_2+\epsilon_5+\epsilon_7,\\
 \gamma_{31}&=-\epsilon_3+\epsilon_6+\epsilon_7,&
 \gamma_{32}&=-\epsilon_1+\epsilon_3+\epsilon_5+\epsilon_7,\\
 \gamma_{312}&=-\epsilon_1+\epsilon_5+2\epsilon_7.
 \end{aligned}
\end{equation*}
Together with the five frozen rays, these generate the integral
$g$-vector cone.  The labels used in the main text are fixed by the
following table.
\begin{table}[ht]
\centering
\small
\caption{The fourteen generators of $\SR$ used in the two-slice proof.}
\label{tab:partial-generators}
\begin{tabular}{ccll}
\toprule
$i$&$g(s_i)$&$\wt(s_i)$&$\partial(s_i)$\\ \midrule
1&$\epsilon_1$&$(110;20;11)$&0\\
2&$\epsilon_2$&$(100;10;01)$&$s_3$\\
3&$\epsilon_3$&$(100;10;10)$&0\\
4&$\epsilon_4$&$(110;11;02)$&$s_9$\\
5&$\epsilon_5$&$(110;11;20)$&0\\
6&$\epsilon_6$&$(200;11;11)$&0\\
7&$\epsilon_7$&$(111;21;12)$&$-s_8$\\
8&$\epsilon_8$&$(111;21;21)$&0\\ \midrule
9&$\gamma_2^\circ$&$(110;11;11)$&$2s_5$\\
10&$\gamma_3'$&$(211;22;22)$&$s_{11}$\\
11&$\gamma_{32}$&$(211;22;31)$&0\\
12&$\gamma_{31}$&$(211;22;13)$&$2s_{10}$\\
13&$\gamma_1'$&$(210;21;12)$&$2s_5s_2$\\
14&$\gamma_{312}$&$(222;33;33)$&0\\
\bottomrule
\end{tabular}
\end{table}

\begin{proposition}[{\cite[Proposition~9.5]{Fk1}}]
\label{prop:partial-minimal-generators}
The algebra $\SR$ is minimally generated by the eight initial
extended cluster variables and the five generic cluster characters with
$g$-vectors
\[ \gamma_2^\circ,\quad\gamma_3',\quad \gamma_{31},\quad\gamma_{32},\quad\gamma_{312}. \]
\end{proposition}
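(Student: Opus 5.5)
The statement is cited from \cite[Proposition~9.5]{Fk1}, so the plan is to reconstruct it from the cluster structure $\SR=\Asup$ together with the weight grading. I would prove generation first and minimality second.

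\emph{Generation.} Since $\SR=\Asup$ and the quiver is a flagged Kronecker quiver with a finite-type-like $g$-vector fan, $\SR$ has a basis of generic cluster characters indexed by the integral points of the $g$-vector cone, following \cite{Fk1,Fk2}. By the table preceding the statement, this cone is generated by the eight initial rays $\epsilon_1,\ldots,\epsilon_8$ together with $\gamma_1'$, $\gamma_2'$, $\gamma_2^\circ$, $\gamma_3'$, $\gamma_{31}$, $\gamma_{32}$, $\gamma_{312}$. I would then show that the cone is triangulated into unimodular cones, each spanned by $g$-vectors of a compatible family. Generic characters multiply along such cones: if $g$ and $g'$ lie in a common cone, then $C_gC_{g'}=C_{g+g'}$. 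Consequently every basis element is a monomial in characters of primitive rays.

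This reduces generation to expressing $\gamma_1'$ and $\gamma_2'$, which are omitted from the list, through the others. These are one-step cluster variables, namely mutations of $s_1$ and $s_2$. The exchange relations $s_1s_1'=\cdots$ and $s_2s_2'=\cdots$ only give them as fractions, so I would instead find polynomial identities in the listed generators. For $\gamma_1'$, the table gives $\partial(s_{13})=2s_5s_2$. In the main text, $s_{13}$ is called a redundant one-step variable, and I expect it to be a polynomial such as $(s_9s_6-\ldots)/\ldots$. The same weight-space dimension count used for \cref{lem:relevant-one-dimensional-weights} should pin down the relation.

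\emph{Minimality.} Use the positive multigrading by triple weights. A generator is redundant exactly when its weight space is spanned by products of generators of strictly smaller weight. For each of the thirteen generators, I would list the decompositions of its weight into nonzero dominant weights of lower-degree generators. For the initial variables, such decompositions typically fail to exist for dominance reasons, as in the proof of \cref{lem:four-chart-coprimality}. When a decomposition does exist, I would compare dimensions: $\dim\SR_\chi$ is a Littlewood--Richardson/Kronecker-type multiplicity and exceeds the dimension of the span of the products by one. An example is $\gamma_2^\circ$ with weight $(110;11;11)$, where the only candidate product is $s_2s_3$ of weight $(200;20;11)$, which already differs, so $\gamma_2^\circ$ is indecomposable.

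\emph{Main obstacle.} The hardest step is establishing the unimodular triangulation of the $g$-vector cone and the compatibility of the ray families. This needs explicit knowledge of the mutation fan beyond what is recorded earlier. I would first try to certify it by exhibiting the clusters obtained by mutating the initial seed along the words $1$, $2$, $3$, $31$, $32$ and $312$ suggested by the labels, and checking that their $g$-vector cones cover the cone.
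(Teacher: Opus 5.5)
\textbf{Verdict: there is a genuine gap in the generation step.} Your generation argument rests on a claim that is false, not merely hard to certify.

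The mutable part of $B_s$ is an oriented $3$-cycle with arrow multiplicities $1,2,1$; the double arrow joins $s_2$ and $s_3$. Mutating at $s_1$ turns it into an acyclic triangle, so the cluster type is affine $\tilde A_2$, not ``finite-type-like''. Hence $\SR$ contains infinitely many cluster variables, and none of them is a product of two elements of positive degree. Indeed, a cluster variable is a unit in the Laurent ring of a seed containing it, so each factor would be a Laurent monomial there. A Laurent monomial with a negative exponent is not in the upper cluster algebra, so one factor would have to be a scalar.

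Now suppose there were a unimodular fan on your fifteen rays with $C_gC_{g'}=C_{g+g'}$ on every cone. Then every generic character that is not such a product would have to be one of the fifteen ray characters, which contradicts the infinitude of cluster variables. You can see the failure directly. Write $s_2'$ for the mutation of $s_2$. Then $\gamma_2'=\gamma_2^\circ+\epsilon_3$, while \eqref{eq:middle-root-coefficient} and the exchange relation $s_2s_2'=s_3^2s_4+s_1s_6$ give $s_3s_9=s_2'+s_2s_5$. So $C_{\epsilon_3}C_{\gamma_2^\circ}\neq C_{\gamma_2'}$.

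The three-term formula for $s_9$ is the imaginary element of the Kronecker pair $s_2,s_3$. So $s_9$ is not a cluster monomial, and $\gamma_2^\circ$ lies in no cluster cone. Consequently your proposed certification, using the finitely many clusters reached along $1,2,3,31,32,312$, cannot cover the cone either.

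\textbf{What is actually needed.} Only the leading-term statement is required. The generic characters form a basis with distinct $g$-vectors, $g$-vectors add under multiplication, and weight spaces are finite-dimensional. So the characters attached to any generating set of the monoid of lattice points generate $\SR$, by the triangular subtraction argument of \cref{lem:hilbert-theta-generation}. Granting, as the appendix records, that the listed vectors generate that monoid, the fourteen $g$-vectors in \cref{tab:partial-generators} already do. A single polynomial identity then removes $s_{13}$.

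\textbf{Misidentified labels.} You misread $\gamma_1'=\epsilon_1-\epsilon_3+\epsilon_6$: it is the mutation of $s_3$, not of $s_1$. The mutation of $s_1$ has $g$-vector $\gamma_3'$ and is the genuinely needed generator $s_{10}$, so trying to eliminate it would fail. The identity you need is polynomial, not a fraction. A one-dimensionality argument cannot find it either, since the weight-$(210;21;12)$ space contains the independent elements $s_2s_9$ and $s_3s_4$. Instead, $s_3s_{13}=s_1s_6+s_2^2s_5$ together with \eqref{eq:middle-root-coefficient} gives $s_{13}=s_2s_9-s_3s_4$, and likewise $s_2'=s_3s_9-s_2s_5$.

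\textbf{Minimality.} Your minimality plan is sound, and simpler than you expect: no generator's weight is a sum of weights of other generators. For instance, $\lambda_3>0$ first appears for $s_7,s_8$. This forces $\mu=(3,1)$ in degree four and $\mu=(4,2)$ in degree six, whereas $s_{10},s_{11},s_{12},s_{14}$ have $\mu=(2,2)$ and $\mu=(3,3)$. So no dimension count is needed.

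The paper itself gives no argument for this proposition; it simply quotes Fei's result.
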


The fourteen generators used in \cref{subsec:invariant-upper} are obtained
from this minimal set by adjoining the one-step cluster variable with
$g$-vector $\gamma_1'$.  This additional variable shortens the root strings
in \eqref{eq:partial-on-s-generators}.

\section{A regular-sequence computation}
\label{app:regular-sequence-certificate}

\subsection{The single colon calculation}
\label{app:regular-sequence}

Let $S=\kk[U_1,\ldots,U_{13}]$, a Noetherian polynomial ring, and let
$J\subset S$ be generated by
\begin{equation}
\label{eq:J-six-relations}
 \begin{aligned}
 &U_3U_7-U_2U_8-U_5U_6,\\
 &U_1^2U_9-U_6^2-4U_2U_3U_4,\\
 &U_1U_{10}-2U_2U_8-U_5U_6,\\
 &U_1U_{11}+2U_2U_4U_5+U_6U_7,\\
 &U_1U_{12}-2U_3U_4U_5+U_6U_8,\\
 &U_1^2U_{13}-U_7U_8+U_4U_5^2.
 \end{aligned}
\end{equation}
Let $\varphi:S\to\Ugen$ send $U_i$ to $u_i$, and put
$I=\ker\varphi$.  The first slice in
\eqref{eq:slice-retraction-table} shows that, after inverting $U_1$, the
relations \eqref{eq:J-six-relations} give the complete presentation: the
last five relations eliminate $U_9,U_{10},U_{11},U_{12},U_{13}$, and the
first relation is the sole relation among the remaining eight generators.
Consequently, $I S_{U_1}=J S_{U_1}$.  Since
$S/I=\Ugen$ is a domain and $u_1\ne0$, the ideal $I$ is
$U_1$-saturated.  Contracting from $S_{U_1}$ therefore gives
\[ I=J S_{U_1}\cap S=J:U_1^\infty. \]
Because $S$ is Noetherian, the ascending union defining
$J:U_1^\infty$ stabilizes after finitely many colon operations.

Use graded reverse lexicographic order with variable order
\[ U_7>U_6>U_5>U_{13}>U_8>U_4>U_{10}>U_{11}> U_3>U_9>U_{12}>U_2>U_1. \]
The ancillary file
\texttt{Kron322\_regular\_sequence.m2} constructs
$I=J:U_1^\infty$, puts $K=I+(U_1)$, and verifies the identity
\[ \operatorname{trim}(K:U_2)=K. \]
Equivalently,
\[ (I+(U_1)):U_2=I+(U_1). \]
The Macaulay2 file records the software version and verifies this equality by
an explicit assertion.  The \texttt{regular-sequence} task in
\path{Kron322.py} independently performs the two saturations by elimination
over $\mathbb Q$ and verifies the stronger equality
\[ K:U_2^\infty=K. \]
The accompanying output records both elimination steps and the equality of
the reduced Gr\"obner bases.  Since the ideals and variables involved are defined over $\mathbb Q$, and every
characteristic-zero field extension of $\mathbb Q$ is flat, the colon
identity persists after scalar extension to $\kk$.

It follows that multiplication by $U_2$ is injective on
\[ S/(I,U_1)\cong\Ugen/u_1\Ugen. \]
Thus $u_1,u_2$ is a regular sequence in $\Ugen$, proving
\cref{lem:Ugen-regular-sequence}.

\section{The folded Donaldson--Thomas interpretation}
\label{app:folded-DT}

This appendix identifies the cluster transformation underlying the pairing in
\cref{prop:paired-theta-generators}.  It is not used in the proof of the
polyhedral or closed formulas.

Two consequences of \eqref{eq:basic-relations} will be used:
\begin{equation}
\label{eq:DT-two-identities}
 \begin{aligned}
 u_1^2u_2u_{13}+u_5^2u_2u_4+u_7^2u_3
   &=u_1u_7u_{10},\\
 u_{10}^2-\udisc u_5^2&=4u_2u_3u_{13}.
 \end{aligned}
\end{equation}
Indeed, the first and third relations in \eqref{eq:basic-relations} give
$u_1u_{10}=u_2u_8+u_3u_7$.  Multiplying by $u_7$ and using the last
relation proves the first identity.  For the second, use
$u_5u_6=u_3u_7-u_2u_8$, the second relation, and then the last relation:
\[
 \begin{aligned}
 u_1^2\bigl(u_{10}^2-\udisc u_5^2\bigr)
 &=(u_2u_8+u_3u_7)^2-u_5^2(u_6^2+4u_2u_3u_4)\\
 &=4u_2u_3(u_7u_8-u_4u_5^2)
  =4u_1^2u_2u_3u_{13}.
 \end{aligned}
\]
Cancelling $u_1^2$ gives the claim.

\begin{proposition}\label{prop:DT-coordinate-action}
There is a normalized folded cluster DT transformation $\Phi$ of the
fiber $\zeta=-1$ of \eqref{eq:zeta-family-exchanges} whose pullback satisfies
\begin{equation}
\label{eq:DT-coordinate-action}
 \Phi^*(u_i)=\udisc u_i\quad(i=1,5,7,8),
 \qquad
 \Phi^*(u_2,u_3,u_4,u_{13})=(u_2,u_3,u_4,u_{13}).
\end{equation}
\end{proposition}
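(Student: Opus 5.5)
We construct $\Phi$ explicitly, by its pullback on the initial cluster, and then check that it is a cluster DT transformation of the $\zeta=-1$ fiber. Concretely, define $\Phi^*$ on the initial extended cluster $\mathbf u^\circ$ by $u_i\mapsto\udisc u_i$ for $i=1,5,7$ and identity on $u_2,u_3,u_4,u_{13}$. Since $\udisc$ has mutable degree $-2$ and each exchange in \eqref{eq:zeta-family-exchanges} is homogeneous of mutable degree $2$ on its right-hand side, this rescaling is compatible with the grading. The map then automatically satisfies $\Phi^*(u_8)=\udisc u_8$, because $u_8=u_7'$. Indeed, the third exchange gives
\[
 u_7\,\Phi^*(u_7')=\udisc^{-1}\Phi^*(u_4u_5^2+u_1^2u_{13})=\udisc\,(u_4u_5^2+u_1^2u_{13}),
\]
so $\Phi^*(u_7')=\udisc u_8$, provided we first show that $\Phi^*(\udisc)=\udisc^{-1}$. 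This identity is the crux. A degree-$(-2)$ element must pull back to its inverse for $\Phi^*$ to be an involution-type DT map on the mutable part. So we must check, using the first identity of \eqref{eq:discriminant-chart-identities} in Laurent form $\udisc=(u_6^2+4u_2u_3u_4)/u_1^2$, how $\Phi^*$ acts on $u_6$.

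The first key step is to describe $\Phi$ geometrically as the composite of mutations realizing the DT transformation of the Markov quiver. The Markov quiver has a maximal green sequence only up to folding, which is why the paper says ``folded''. The standard choice is the mutation sequence $(1,5,7,\ldots)$ composed with the permutation, or equivalently the Goncharov--Shen description: DT equals the product over the three quiver vertices of the ``half-twist'' on the punctured torus. With the frozen coefficients present, the tropical sign condition is that the $c$-vectors after the sequence are $-I$. We verify this tropically on $B_{u,\zeta}$. The resulting $g$-vectors in the initial seed should be exactly $h_1^-,h_2^-,h_3^-$, and by \cref{prop:paired-theta-generators} those theta functions are $\doteq\udisc u_1,\udisc u_5,\udisc u_7$. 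This identifies the pullback on the initial cluster up to scalars, which we fix by the normalization (this is what ``normalized'' means). Frozen variables are fixed by a DT map by definition.

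The second step removes scalars and confirms the formulas exactly. We compute the images of $u_1,u_5,u_7$ as Laurent polynomials along the sequence, using \eqref{eq:partners-in-u} and the identities \eqref{eq:DT-two-identities}. The first of these identities, $u_1^2u_2u_{13}+u_5^2u_2u_4+u_7^2u_3=u_1u_7u_{10}$, is the Markov-type equation showing that the mutated variables close up. The second identity, $u_{10}^2-\udisc u_5^2=4u_2u_3u_{13}$, converts the final cluster variable in the $u_1'$-chart into $\udisc u_5$. The case $u_8$ then follows from the exchange computation above.

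The main obstacle is choosing the correct folded sequence. On the once-punctured torus the DT transformation is not a finite product of ordinary mutations, so we must realize $\Phi$ as a cluster automorphism with a quiver permutation, or via the folding to a finite type, such as the $D_5$ picture of Appendix~\ref{app:D5-parabolic}. We would first try the $D_5$ unfolding, where DT is the longest-element sequence, and then restrict to the folded fiber, checking that the coefficient $\zeta=-1$ is preserved.
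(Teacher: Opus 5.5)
There is a genuine gap: you never construct the DT transformation, only the answer it is supposed to have. Defining $\Phi^*$ on the seed torus by $u_i\mapsto\udisc u_i$ for $i=1,5,7$, with $u_2,u_3,u_4,u_{13}$ fixed, is a well-defined birational rescaling. Everything you derive from it follows from homogeneity: $\Phi^*(f)=\udisc^d f$ for $f$ of mutable degree $d$. So $\Phi^*(\udisc)=\udisc^{-1}$ is not a crux. Also, $\Phi^*(u_8)=\udisc u_8$ follows directly from $\Phi^*(u_4u_5^2+u_1^2u_{13})=\udisc^2(u_4u_5^2+u_1^2u_{13})$, without using it.

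The whole content of the proposition is that this rescaling \emph{is} the normalized folded cluster DT transformation. That requires an explicit mutation sequence whose effect you compute, and none of your routes supplies one.
\begin{itemize}
\item A tropical check on $B_{u,\zeta}$ that the $c$-vectors become $-I$ cannot succeed for any sequence. The Markov quiver of the once-punctured torus has no reddening sequence, which is why DT is only available after unfolding. Your last paragraph acknowledges this, contradicting your earlier step.
\item The $D_5$ route fails. Appendix~\ref{app:D5-parabolic} concerns the ambient parabolic representation and the weight lattice, not a folding of the quiver. Moreover, a folding of a finite-type quiver is again of finite cluster type, whereas the Markov class is not.
\item The sequence ``$(1,5,7,\ldots)$'' and the ``half-twist'' description are not specified well enough to compute with.
\item Asserting that the $g$-vectors of the images ``should be'' $h_i^-$ presupposes the conclusion.
\end{itemize}

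The missing ingredient is the six-vertex unfolding with fold orbits $\{1,4\},\{2,5\},\{3,6\}$, used with Zhou's explicit DT word $(1,3,2,4,6,5,1,6,4,3,2,5)$ and terminal relabeling $(4,1,3,5,2,6)$.
\begin{enumerate}
\item Check that the first ten columns of the unfolded matrix return to themselves while the $\zeta$-column changes sign. The terminal coefficient is then $\zeta^{-1}$, the ideal $(\zeta+1)$ is preserved, and the map descends to the fiber $\zeta=-1$.
\item Run the twelve exchanges on the variables, identify fold orbits, and set $\zeta=-1$. This gives $u_1\mapsto u_1V$, $u_5\mapsto-u_5V$, $u_7\mapsto u_7V$ for an explicit rational function $V$.
\item Remove the sign on $u_5$ by a specific coefficient-torus rescaling whose exponent vector lies in the right kernel of the first ten columns. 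That makes it an automorphism of the seed pattern, and this is what ``normalized'' means; it is not a free choice of scalars.
\item Only now do the identities \eqref{eq:DT-two-identities}, which you correctly flagged, enter. The first rewrites the numerator of $V$ as $u_1^2u_7^2(u_{10}^2-4u_2u_3u_{13})$. The second turns this into $\udisc\,u_1^2u_5^2u_7^2$, giving $V=\udisc$.
\end{enumerate}
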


\begin{proof}
Use the six-vertex unfolding with fold orbits
$\{1,4\},\{2,5\},\{3,6\}$.  In the mutable-row convention, all six rows
and the first six columns are mutable; the remaining columns correspond to
$u_2,u_3,u_4,u_{13},\zeta$:
\[
 \widetilde B_u=
 \begin{pmatrix}
 0&-1& 1& 0&-1& 1&-1& 1&-1& 0&0\\
 1& 0&-1& 1& 0&-1& 1&-1& 0& 1&1\\
 -1&1& 0&-1& 1& 0& 0& 0& 1&-1&0\\
 0&-1& 1& 0&-1& 1&-1& 1&-1& 0&0\\
 1& 0&-1& 1& 0&-1& 1&-1& 0& 1&1\\
 -1&1& 0&-1& 1& 0& 0& 0& 1&-1&0
 \end{pmatrix}.
\]
Zhou's DT word for this unfolding is the following (see \cite[Section~5.1]{Zhou}):
\[ \mathbf m=(1,3,2,4,6,5,1,6,4,3,2,5). \]
After mutation along $\mathbf m$ and terminal
relabeling in the order $(4,1,3,5,2,6)$, the first ten columns return to
their initial values and the $\zeta$-column changes sign.  Thus the terminal
coefficient is $\zeta^{-1}$; the ideal $(\zeta+1)$ is preserved, so the
transformation descends to the fiber $\zeta=-1$.

Applying the same twelve exchange mutations to the variables, identifying
each fold orbit, and setting $\zeta=-1$, gives, before normalization,
\[ u_1\longmapsto u_1V, \qquad u_5\longmapsto-u_5V, \qquad u_7\longmapsto u_7V, \]
where
\begin{equation}
\label{eq:DT-factor-V}
 V=
 \frac{
 (u_1^2u_2u_{13}+u_5^2u_2u_4+u_7^2u_3)^2
 -4u_1^2u_7^2u_2u_3u_{13}}
 {u_1^2u_5^2u_7^2}.
\end{equation}
The sign in the second coordinate is removed by the coefficient-torus
normalization which multiplies the second and fifth unfolded mutable
variables by $\zeta$ and multiplies $u_4$ by $\zeta^{-2}$.  The
exponent vector of this rescaling lies in the right kernel of the first ten
columns of $\widetilde B_u$, so it is an automorphism of the seed pattern.
On $\zeta=-1$ it changes only the second folded sign.

Finally, \eqref{eq:DT-two-identities} transforms \eqref{eq:DT-factor-V} into
\[ V=\frac{u_{10}^2-4u_2u_3u_{13}}{u_5^2}=\udisc. \]
This gives the first three identities in \eqref{eq:DT-coordinate-action}.
Applying $\Phi^*$ to
$u_7u_8=u_4u_5^2+u_1^2u_{13}$ gives the identity for $u_8$.
\end{proof}

Every homogeneous Laurent rational function $f$ of mutable degree $d$
therefore satisfies
\[ \Phi^*(f)=\udisc^d f. \]
In particular, $\Phi^*(\udisc)=\udisc^{-1}$.  It fixes the eight
degree-zero generators of $\Mu$, and by
\cref{prop:paired-theta-generators} it exchanges, up to nonzero scalars, the
four paired generators of degrees $1$ and $-1$.  Hence it preserves
$\Mu$.  This cluster transformation accounts for the weight-space pairing used in the
main proof.

\section{A \texorpdfstring{$D_5$}{D5}-parabolic interpretation of the seed}
\label{app:D5-parabolic}

This appendix interprets the seed with four frozen variables
\[ \mathbf u^\circ=(u_1,u_5,u_7\,;u_2,u_3,u_4,u_{13}) \]
using the maximal parabolic at the trivalent node of type $D_5$.  The
appendix is not used elsewhere.

\subsection{The trivalent parabolic of
\texorpdfstring{$\operatorname{Spin}_{10}$}{Spin(10)}}

Let
\[ V=\kk^3,\qquad A=V^\vee,\qquad B=C=\kk^2, \qquad W=B\otimes C. \]
Alternating forms on $B$ and $C$ define a nondegenerate symmetric form
on $W$ by
\[ \langle b\otimes c,b'\otimes c'\rangle_W =\omega_B(b,b')\omega_C(c,c'). \]
Equip $E=A\oplus W\oplus A^\vee$ with the split quadratic form pairing
$A$ with $A^\vee$ and restricting to this form on $W$.  Then
$G=\operatorname{Spin}(E)$ is of type $D_5$.  Let $P_3\subset G$
stabilize the isotropic three-plane $A$.  Deleting the trivalent node
$\alpha_3$ leaves $A_2\sqcup A_1\sqcup A_1$, so, up to a finite
central quotient,
\[ (L_{P_3})_{\mathrm{der}} \simeq\SL(A)\times\SL(B)\times\SL(C). \]
The standard parabolic grading \cite[Chapter~3]{CapSlovak} has
\[ \mathfrak g_0\simeq\mathfrak{gl}(A)\oplus\mathfrak{so}(W),\qquad \mathfrak g_{-1}\simeq A^\vee\otimes W\simeq V\otimes B\otimes C,\qquad \mathfrak g_{-2}\simeq\bigwedge^2A^\vee. \]
It may also be viewed as a Vinberg $\theta$-representation
\cite{VinbergTheta}.  Indeed, if $\gamma:\mathbb G_m\to G$ is the
grading cocharacter and $\epsilon$ is a primitive fifth root of unity,
then $\theta=\operatorname{Ad}(\gamma(\epsilon))$ has fixed Lie algebra
$\mathfrak g_0$, while $\mathfrak g_{-1}$ is its
$\epsilon^{-1}$-eigenspace.  Thus, up to the identity component and a
finite central quotient, $(L_{P_3},\mathfrak g_{-1})$ is a
$\theta$-group representation in Vinberg's sense.
The block decomposition of $\mathfrak{so}(A\oplus W\oplus A^\vee)$ gives
these three summands directly; the dimension check is
\[ 3+12+(9+6)+12+3=45=\dim\mathfrak{so}_{10}. \]
After identifying $\SL(A)$ with $\SL(V)$ by duality and choosing
compatible Borels, the Levi maximal unipotent subgroup acts as
$U_3\times U_2\times U_2$.  Hence
\[ \UR=\kk[\mathfrak g_{-1}]^{U_L},\qquad U_L=U_3\times U_2\times U_2. \]
The Kronecker grading is therefore the Levi highest-weight grading of the
degree $-1$ component.

\subsection{Orthogonal Pl\"ucker and Gram covariants}

The homogeneous space $G/P_3$ is $\operatorname{OGr}(3,10)$.  If $J$
is a Gram matrix of the form on $W$, the big cell of $G/P_3$ is parametrized by
\[
 (X,S)\in\operatorname{Hom}(A,W)\oplus\bigwedge^2A^\vee,
 \qquad
 \operatorname{colspan}
 \begin{pmatrix}
 I_3\\ X\\ S-\frac12X^{\mathsf T}JX
 \end{pmatrix}.
\]
The tensor representation is the slice $S=0$.  Order the ten rows as the
three $A$-rows, four $W$-rows, and three $A^\vee$-rows, and write
$p_{ijk}$ for the corresponding Pl\"ucker minor.  With compatible bases
and Borels, direct substitution identifies the seven seed functions, up to
nonzero scalars, as
\[
\begin{aligned}
 u_1&\doteq p_{234},&u_5&\doteq p_{456},&u_7&\doteq p_{468},\\
 u_2&\doteq p_{346},&u_3&\doteq p_{345},&u_4&\doteq p_{238},&
 u_{13}&\doteq p_{8,9,10}.
\end{aligned}
\]
Here $\doteq$ denotes equality up to an element of $\kk^\times$.  Two of
these identifications follow immediately from the big-cell matrix:
\[ p_{238}=-\frac12\langle v_1,v_1\rangle_W,\qquad p_{8,9,10}=\left(-\frac12\right)^3\det(X^{\mathsf T}JX). \]
Thus $p_{238}\doteq D_1$ and $p_{8,9,10}\doteq D_3$.

Write $X=(v_1,v_2,v_3)$, with $v_i\in W$, and put
\[ D_r=\det\bigl(\langle v_i,v_j\rangle_W\bigr)_{1\le i,j\le r}. \]
Up to the same scalar convention,
\[ u_4=D_1,\qquad u_9=D_2,\qquad u_{13}=D_3. \]
The pair $u_2,u_3$ consists of the two highest components of
$v_1\wedge v_2$ under
\[ \bigwedge^2(B\otimes C) \simeq (\Sym^2B\otimes\bigwedge^2C) \oplus(\bigwedge^2B\otimes\Sym^2C). \]
If $N=*(v_1\wedge v_2\wedge v_3)\in W$, then $u_5$ is a
highest-weight coordinate of $N$, while $u_7$ is a highest-weight
component of $v_1\wedge N$.  The frozen variables $u_2,u_3$ are
highest-weight coordinates in the two irreducible
$\operatorname{Spin}_4$-summands above; $u_4=D_1$ and $u_{13}=D_3$
are the first and third principal Gram determinants.  The additional
generator in $\UR=\Mu[u_9]$ is the second principal Gram determinant
$D_2$.  Under these identifications, the relation
\[ u_7u_8=u_4u_5^2+u_1^2u_{13} \]
is the corresponding relation among orthogonal Pl\"ucker and Gram
coordinates.

\subsection{The \texorpdfstring{$D_5$}{D5}-weight lattice and Gale duality}

Let $\Lambda_{322}$ be the lattice of integer triples
$(\lambda;\mu;\nu)$ with
$\lvert\lambda\rvert=\lvert\mu\rvert=\lvert\nu\rvert=n$.  Realize
\[ Q(D_5)=\{z\in\ZZ^5\mid z_1+\cdots+z_5\equiv0\pmod2\} \]
with simple roots
$\alpha_1=e_1-e_2,\alpha_2=e_2-e_3,\alpha_3=e_3-e_4,
\alpha_4=e_4-e_5,\alpha_5=e_4+e_5$.  The map
\[ \Psi(\lambda;\mu;\nu) =(-\lambda_3,-\lambda_2,-\lambda_1, \mu_1+\nu_1-n,\nu_1-\mu_1) \]
is an isomorphism $\Lambda_{322}\simeq Q(D_5)$.  Indeed, for
$z\in Q(D_5)$, take
\[ n=-(z_1+z_2+z_3),\quad \lambda=(-z_3,-z_2,-z_1),\quad \mu_1=\frac{n+z_4-z_5}{2},\quad \nu_1=\frac{n+z_4+z_5}{2}, \]
and then $\mu_2=n-\mu_1$, $\nu_2=n-\nu_1$.

For
\[ (x_1,\ldots,x_7)=(u_1,u_5,u_7,u_2,u_3,u_4,u_{13}), \]
let $\chi_i=\Psi(\wt(x_i))$.  The rows of the following matrix are the
seven $D_5$-weights, while $B_u$ is obtained from $B_{u,\zeta}$ by
deleting the auxiliary $\zeta$-column:
\[
 \Omega_{D_5}=
 \begin{pmatrix}
 0&0&-1&1&0\\
 -1&-1&-1&1&0\\
 -1&-1&-2&1&1\\
 0&-1&-1&1&1\\
 0&-1&-1&1&-1\\
 0&0&-2&0&0\\
 -2&-2&-2&0&0
 \end{pmatrix},
 \qquad
 B_u=
 \begin{bmatrix}
 0&-2& 2&-1& 1&-1& 0\\
 2& 0&-2& 1&-1& 0& 1\\
 -2& 2& 0& 0& 0& 1&-1
 \end{bmatrix}.
\]
Let $\chi:\ZZ^7\to Q(D_5)$ send $e_i$ to $\chi_i$.

\begin{proposition}[Gale duality for the seed]
\label{prop:D5-Gale-duality}
The exchange matrix is an integral Gale dual of the seven $D_5$-weights:
\[ B_u\Omega_{D_5}=0, \]
and there is an exact sequence
\[ 0\longrightarrow\ZZ \xrightarrow{\,1\mapsto(1,1,1)\,}\ZZ^3 \xrightarrow{\,B_u^{\mathsf T}\,}\ZZ^7 \xrightarrow{\,\chi\,}Q(D_5)\longrightarrow0. \]
\end{proposition}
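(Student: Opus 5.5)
The plan is to verify the matrix identity $B_u\Omega_{D_5}=0$ directly, and then to deduce exactness from rank counts together with saturation arguments over $\ZZ$.

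\emph{Step 1: complex property.} The product $B_u\Omega_{D_5}$ is a $3\times5$ matrix, and I would compute it entrywise. A conceptual reason it should vanish is the following. Each row of $\Omega_{D_5}$ is $\Psi(\wt(x_i))$, and $\Psi$ is linear. Hence $B_u\Omega_{D_5}=(B_uW')\Psi$, where $W'$ is the weight matrix in \eqref{eq:weight-matrix-W}, read in the lattice $\Lambda_{322}$. Each row of $B_uW'$ is the weight of $X^{[b_k]_+}$ minus the weight of $X^{[-b_k]_+}$. This difference is zero because each exchange relation in \eqref{eq:zeta-family-exchanges} is homogeneous for the triple grading; at $\zeta=-1$ the $\zeta$-column is deleted and $\zeta$ has weight zero. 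Thus $\chi\circ B_u^{\mathsf T}=0$. For the left end, the sum of the three rows of $B_u$ is zero, as a column-by-column check shows. Hence $(1,1,1)\mapsto0$, and the first map is a complex.

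\emph{Step 2: exactness at $\ZZ$ and $\ZZ^3$.} The Markov principal part has rank $2$. This is visible from the $2\times2$ minors, for instance rows $1,2$ and columns $4,7$, which give $\det\begin{pmatrix}-1&0\\1&1\end{pmatrix}=-1$. So $B_u$ has rank exactly $2$, and $\ker B_u^{\mathsf T}$ is a rank-one saturated sublattice containing $(1,1,1)$. Since $(1,1,1)$ is primitive, it generates this kernel, and the map $\ZZ\to\ZZ^3$ is injective.

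\emph{Step 3: exactness at $\ZZ^7$ and surjectivity of $\chi$.} This is the main obstacle, since it requires integral rather than rational statements.
\begin{itemize}
\item[(a)] Because $B_u$ has a $2\times2$ minor equal to $\pm1$, the gcd of its $2\times2$ minors is $1$. Hence $\operatorname{im}B_u^{\mathsf T}$ is a saturated rank-two sublattice of $\ZZ^7$.
\item[(b)] To show $\chi$ is surjective, I would use the isomorphism $\Psi\colon\Lambda_{322}\simeq Q(D_5)$ and exhibit five of the seven seed weights forming a $\ZZ$-basis of $Q(D_5)$. Concretely, I would compute a $5\times5$ minor of $\Omega_{D_5}$ in standard coordinates of $\ZZ^5$ and check that it equals $\pm2$, which is the index of $Q(D_5)$ in $\ZZ^5$. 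The first candidate row set is $u_1,u_5,u_2,u_3,u_4$. If that minor is not $\pm2$, I would try other subsets; alternatively, I would express the simple roots $\alpha_1,\dots,\alpha_5$ as integer combinations of the $\chi_i$.
\end{itemize}
Given surjectivity, $\ker\chi$ has rank $7-5=2$ and is saturated, being the kernel of a map to a free module. It contains the saturated rank-two lattice $\operatorname{im}B_u^{\mathsf T}$ by Step 1. Two saturated sublattices of the same rank with one contained in the other must coincide, so $\ker\chi=\operatorname{im}B_u^{\mathsf T}$.

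All the finite checks involved — the product, the row sums, the minors, and the basis test — are of the type handled by the \texttt{fan}/matrix tasks in \path{Kron322.py}, and would be recorded there.
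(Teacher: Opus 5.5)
Your proof is correct, and Steps 1 and 2 agree with the paper, including the last check that $\ker B_u^{\mathsf T}$ is generated by $(1,1,1)$. For exactness at $\ZZ^7$ you take a different route.

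The paper computes $\ker\chi$ by integral row reduction, obtaining $\{(2s,2t,-2s-2t,s+t,-s-t,t,s)\}$. It identifies this with $\operatorname{im}B_u^{\mathsf T}$ via $s=r_2-r_3$, $t=r_3-r_1$; that one equality already contains $B_u\Omega_{D_5}=0$. You avoid computing the kernel. The unit minor makes $\operatorname{im}B_u^{\mathsf T}$ saturated, and a saturated rank-two sublattice inside the rank-two lattice $\ker\chi$ must equal it; only the smaller lattice needs to be saturated.

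Your route isolates the actual integrality input, and that input comes from the frozen columns. Every $2\times2$ minor of the Markov principal part is divisible by $4$, since all its entries lie in $\{0,\pm2\}$. So reword the sentence that attributes the unit minor in columns $4,7$ to the principal part; the conclusion $\operatorname{rank}B_u=2$ is still right. The paper's explicit kernel has its own payoff: it is the direction lattice of the two-dimensional weight slices $\{gW=\bar\chi\}$ counted later in the polyhedral formula.

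One correction to Step 3(b). Your first candidate $\{u_1,u_5,u_2,u_3,u_4\}$ fails: its $5\times5$ minor is $4$, so these weights span an index-two sublattice of $Q(D_5)$. The subset $\{u_1,u_5,u_7,u_2,u_3\}$, i.e.\ $\chi_1,\dots,\chi_5$, has minor $2$ and is a basis. Your second fallback is exactly the paper's argument:
\[ \alpha_1=-\chi_1-\chi_2+\chi_4+\chi_5,\quad \alpha_2=\chi_1-\chi_2+\chi_3-\chi_4,\quad \alpha_3=-\chi_1, \]
\[ \alpha_4=\chi_1+\chi_2-\chi_3,\quad \alpha_5=\chi_1+\chi_2-\chi_3+\chi_4-\chi_5. \]
A spanning set of a lattice need not contain a basis, so these explicit expressions are the safer way to state surjectivity.
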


\begin{proof}
The identities
\[
\begin{gathered}
 \alpha_1=-\chi_1-\chi_2+\chi_4+\chi_5,\quad
 \alpha_2=\chi_1-\chi_2+\chi_3-\chi_4,\quad
 \alpha_3=-\chi_1,\\
 \alpha_4=\chi_1+\chi_2-\chi_3,\quad
 \alpha_5=\chi_1+\chi_2-\chi_3+\chi_4-\chi_5
\end{gathered}
\]
show that $\chi$ is surjective.  Integral row reduction gives
\[ \ker\chi= \{(2s,2t,-2s-2t,s+t,-s-t,t,s)\mid s,t\in\ZZ\} =\operatorname{im}(B_u^{\mathsf T}). \]
Finally, $B_u^{\mathsf T}r=0$ forces $r_1=r_2=r_3$.
\end{proof}
The \texttt{d5} task in \path{Kron322.py} checks the
weight conversion, Smith normal form index, matrix product, ranks, integral kernel, and
Hermite normal form.

The three rows of $B_u$ give the homogeneity identities
\[ 2\chi_3+\chi_5=2\chi_2+\chi_4+\chi_6,\qquad 2\chi_3+\chi_5=2\chi_1+\chi_4+\chi_7, \qquad 2\chi_2+\chi_6=2\chi_1+\chi_7. \]

\begin{center}
\scriptsize
\renewcommand{\arraystretch}{1.08}
\begin{tabular}{@{}>{\raggedright\arraybackslash}p{0.28\textwidth}
                    >{\raggedright\arraybackslash}p{0.65\textwidth}@{}}
\toprule
\textbf{Cluster datum}&\textbf{Type-$D_5$ interpretation}\\
\midrule
$3\times2\times2$ tensor space
& The degree $-1$ component $\mathfrak g_{-1}$ of
  $P_3\subset\operatorname{Spin}_{10}$.\\
Triple $U$-invariant algebra $\UR$
& The Levi highest-weight algebra $\kk[\mathfrak g_{-1}]^{U_L}$.\\
Seven seed functions
& Restricted orthogonal Pl\"ucker covariants on
  $\operatorname{OGr}(3,10)$.\\
Frozen pair $u_2,u_3$
& Highest-weight coordinates in the two irreducible summands of
  $\bigwedge^2(B\otimes C)$.\\
Frozen variables $u_4,u_{13}$
& The first and third principal Gram determinants $D_1,D_3$;
  $u_9=D_2$ is the second one.\\
Exchange data
& The rows of $B_u$ are the full lattice of $D_5$-weight relations;
  the third exchange is the Pl\"ucker--Gram identity.\\
Mutable Markov quiver
& The mutation class of the seed; it is of infinite cluster type, not of
  finite Dynkin type $D_5$.\\
\bottomrule
\end{tabular}
\end{center}

The $D_5$ description identifies the ambient representation, the rank-five
weight lattice, and the homogeneity relations.  The mutation class remains
the Markov class.

\section{Two further cluster models}
\label{app:other-models}

The two models below are stated without proof and are not used in the main
argument.  The invariants $u_1,\ldots,u_{13}$ retain the normalization of
\eqref{eq:basic-relations}; the formulas fix the primed variables.
A negative exchange sign denotes specialization at $\zeta=-1$ in an
ordinary cluster family.  The matrices omit the auxiliary $\zeta$-column.

\subsection{A model with four mutable variables}

Take mutable variables $(u_1,u_2,u_3,u_6)$ and frozen variables
$(u_7,u_8,u_9)$.  The exchange relations are
\begin{align*}
 u_1u_1'&=u_2u_8+u_3u_7,
 &u_2u_2'&=u_1^2u_9-u_6^2,\\
 u_3u_3'&=u_1^2u_9-u_6^2,
 &u_6u_6'&=u_3u_7-u_2u_8,
\end{align*}
where
\[ u_1'=u_{10},\qquad u_2'=4u_3u_4,\qquad u_3'=4u_2u_4,\qquad u_6'=u_5. \]
These formulas follow from the first three identities in
\eqref{eq:basic-relations}.  In the mutable-row convention, with columns ordered
by $(u_1,u_2,u_3,u_6;u_7,u_8,u_9)$, the exchange matrix is
\[
 B_A=
 \begin{bmatrix}
 0&-1& 1& 0& 1&-1& 0\\
 2& 0& 0&-2& 0& 0& 1\\
-2& 0& 0& 2& 0& 0&-1\\
 0& 1&-1& 0&-1& 1& 0
 \end{bmatrix}.
\]
For this model,
\[ \UR=\mathcal M_A[u_4,u_{13}]. \]

\subsection{A model with five mutable variables}

Take mutable variables $(u_1,u_2,u_3,u_5,u_6)$ and frozen variables
$(u_{11},u_{12})$.  With the normalizations
\begin{equation*}
 \begin{aligned}
 u_1'&=u_5u_9,&
 u_2'&=-2u_3u_4u_5-u_6u_8,&
 u_3'&=2u_2u_4u_5-u_6u_7,\\
 u_5'&=u_1u_9,&
 u_6'&=-u_{10},
 \end{aligned}
\end{equation*}
\eqref{eq:basic-relations} gives the exchange relations:
\begin{align*}
 u_1u_1'&=u_2u_{12}-u_3u_{11},
 &u_2u_2'&=u_1u_3u_{11}+u_5u_6^2,\\
 u_3u_3'&=u_1u_2u_{12}-u_5u_6^2,
 &u_5u_5'&=u_2u_{12}-u_3u_{11},\\
 u_6u_6'&=u_2u_{12}+u_3u_{11}.
\end{align*}
With columns ordered by
$(u_1,u_2,u_3,u_5,u_6;u_{11},u_{12})$, the exchange matrix is
\[
 B_B=
 \begin{bmatrix}
 0& 1&-1& 0& 0&-1& 1\\
-1& 0&-1& 1& 2&-1& 0\\
 1& 1& 0&-1&-2& 0& 1\\
 0&-1& 1& 0& 0& 1&-1\\
 0&-1& 1& 0& 0& 1&-1
 \end{bmatrix}.
\]
For this model,
\[ \UR=\mathcal M_B[u_4,u_9,u_{13}]. \]

The corresponding cones and theta-generator pairings lead to analogues of
\cref{thm:polyhedral-Kronecker,thm:closed-formula}.  Their proofs and finite
sums are omitted.  The model used in the main text has the shortest
presentation.

\bibliographystyle{amsalpha}
\bibliography{Kron322}

\end{document}